\newtheorem{theorem}{Theorem}[section]
\newtheorem{coro}[theorem]{Corollary}
\newtheorem{lemma}[theorem]{Lemma}
\theoremstyle{definition}
\theoremstyle{remark}
\newcommand{\rht}{\operatorname{ht}}
\newcommand{\ksl}{\mathfrak{sl}}
\newcommand{\kso}{\mathfrak{so}}
\newcommand{\kg}{\mathfrak{g}}
\newcommand{\kh}{\mathfrak{h}}
\newcommand{\kn}{\mathfrak{n}}
\renewcommand\leq{\leqslant}
\renewcommand\geq{\geqslant}
\numberwithin{equation}{section}
\setlist[enumerate]{label=(\arabic*)}
\setlist[enumerate,2]{label=(\alph*)}
\setlist[enumerate,3]{label=(\roman*)}
\begin{document}
	
	\title{On singular vectors and associated Weyl elements for simply-laced universal affine vertex operator algebras}
	\author{Cuipo Jiang and Jingtian Song}
	\date{}
	\maketitle
	
	\begin{abstract} 
		Given  a finite-dimensional complex simple Lie algebra $\kg$  and a complex number $\kappa$, let $V^{\kappa}(\kg)$ be the associated universal affine vertex algebra.  Gorelik and Kac \cite{gorelik2007simplicity} gave a sufficient and necessary condition for $V^{\kappa}(\kg)$ to be simple.  In this paper, for simply-laced $\mathfrak{g}$ and non-critical $\kappa$, we determine the weights of singular vectors of $V^{\kappa}(\kg)$ with minimal conformal weights, when $V^{\kappa}(\kg)$ is not simple. We further determine all the longest Weyl elements in the  Kashiwara-Tanisaki character theorem \cite{KT00} which correspond to  the  weights of the  singular vectors.
	\end{abstract}
	
	\tableofcontents
	
	\section{Introduction}
	Let $\mathfrak{g}$ be a finite-dimensional complex simple Lie algebra  with the normalized non-degenerate bilinear form $(\cdot|\cdot)$. Let $\widehat{\mathfrak{g}}=\mathfrak{g}\otimes\mathbb{C}[t,t^{-1}]\oplus \mathbb{C}K$ be the associated affine Lie algebra \cite{kac1990infinite}.  Given a complex number ${\kappa}$,  let $V^{\kappa}(\mathfrak{g})$ be the universal affine vertex algebra associated to $\mathfrak{g}$ and $\kappa$ \cite{FZ92,LL04}. $V^{\kappa}(\mathfrak{g})$ is also called the vacuum module of the affine Lie algebra $\widehat{\mathfrak{g}}$ at level $\kappa$.  	Let $\mathbf{h}^\vee$ be the dual Coxeter number of $\mathfrak{g}$, as listed in \cite{kac1990infinite}. It was shown in  \cite{gorelik2007simplicity} that $V^\kappa(\kg)$ is non-simple if and only if
	$r^\vee(\kappa+\mathbf{h}^\vee)\in\mathbb{Q}_{\geq 0}\setminus\{\frac{1}{m}:m\in\mathbb{Z}_{\geq 1}\}$, where $r^\vee$ is the lacing number of $\kg$. One fundamental problem is to characterize the maximal ideal of $V^{\kappa}(\mathfrak{g})$. 
	If  $\kappa$ is a non-negative integer, the maximal ideal of $V^{\kappa}(\mathfrak{g})$ is generated by  the singular vector $e_{\theta}(-1)^{\kappa+1}{\mathbf{1}}$ \cite{KW89, kac1990infinite, FZ92,  LL04}, etc.   When $\kappa$ is an admissible number, that is, 
	\begin{align*}
		\kappa+\mathbf{h}^{\vee}=\frac{p}{q},\quad p,q\in \mathbb{Z}_{\geq 1}, \ (p,q)=1,\
		p\geq \begin{cases}\mathbf{h}^{\vee}&\text{if }(r^{\vee},q)=1,\\
			\mathbf{h}&\text{if }(r^{\vee},q)\ne 1,
		\end{cases}
	\end{align*}
	the maximal ideal of $V^{\kappa}(\mathfrak{g})$ is generated by a singular vector, whose   weight 
	with respect to the generalized Cartan subalgebra of $\widehat{\mathfrak{g}}$ can be determined by the character formula given in \cite{KW89}, although it is  usually hard to characterize the singular vector explicitly in general, except for some special cases  (see \cite{MFF86, APV23}, etc.). 
	For other cases of $\kappa=-\mathbf{h}^{\vee}+\frac{p}{q}$,  which we usually call  the non-admissible levels,  no systematic results have been established with respect to  determination of maximal ideals, 
	although there has been nice progress in  characterizing singular vectors of  $V^{\kappa}(\mathfrak{g})$ in general \cite{AJM21} and maximal ideals for some special cases  of $\mathfrak{g}$ and $\kappa$ \cite{AP08, Pe08, AM17, AKMPP20, ADFLM25, JS25}.
	
	The first  purpose of this paper is to use the Gorelik-Kac determinant formula given in \cite{gorelik2007simplicity} to characterize  weights of singular vectors of $V^{\kappa}(\mathfrak{g})$ when $\mathfrak{g}$ is simply-laced.  We determine the weights of singular vectors with minimal
	conformal weights at all non-critical levels $\kappa$ for which
	$V^\kappa(\mathfrak g)$ is not simple.
	
	Let $\mathfrak{h}$ and $\widehat{\mathfrak{h}}$ be the Cartan subalgebras of $\mathfrak{g}$ and $\widehat{\mathfrak{g}}$, respectively.  Let 
	$r^\vee(\kappa+\mathbf{h}^\vee)=\frac{p}{q}$ be such that $p\in\mathbb{Z}_{\geq 2}$, $q\in\mathbb{Z}_{\geq 1}$, and $(p,q)=1$.  Then by \cite{gorelik2007simplicity}, the universal affine vertex operator algebra  $V^{\kappa}(\mathfrak{g})$ is non-simple.  For simplicity, we call a singular vector  minimal if its conformal weight is the smallest one among all the singular vectors. To characterize the maximal ideal of $V^{\kappa}(\mathfrak{g})$, the first thing  is to determine the weights  with respect to the Cartan subalgebra $\widehat{\mathfrak{h}}$  of the minimal singular vectors.  To achieve this  goal  we can use the powerful Gorelik-Kac determinant formula. 
	However,  the determinant formula is rather complicated.  Therefore, our first step is to rewrite the formula in a form that is easier to handle. We next analyze the domain of the parameters to determine the weights of minimal singular vectors.  
	The Weyl group plays an  important role in the analysis. The following three theorems characterize weights of minimal singular vectors  for $\mathfrak{g}$ of types $\mathsf{A}$, $\mathsf{D}$,  and $\mathsf{E}$, respectively.
	\begin{theorem}\label{JS25b}
		Let  $\mathfrak{g}=\mathfrak{sl}_n$ and $\kappa+n=\frac{p}{q}$ with $p\in\mathbb{Z}_{\geq 2},q\in\mathbb{Z}_{\geq 1}$ and $(p,q)=1$.  
		Then all the weights $\Lambda_{sing}=\kappa\Lambda_0-\nu_\kappa$ of minimal singular vectors in $V^{\kappa}(\mathfrak{g})$ are given as follows, and the number of linearly independent singular vectors with respect to each weight is exactly one: 
		\begin{enumerate}[wide] 
			\item When $p\geq n$, $\nu_{\kappa}=(p-n+1)q\delta-(p-n+1)(\alpha_1+\alpha_2+\cdots+\alpha_{n-1})$.
			\item When $p=2<n$,  if $2|n$, then 
			$$\nu_{\kappa}=
			\dfrac{n}{2}q\delta-(\alpha_1+\alpha_2+\cdots+\alpha_{n-1});$$ 
			if $2\nmid n$, then $\nu_{\kappa}$ can be taken as $$nq\delta-(2\alpha_1+2\alpha_2+\cdots+2\alpha_{n-2}+\alpha_{n-1})$$
			and $$nq\delta-(\alpha_1+2\alpha_2+\cdots+2\alpha_{n-2}+2\alpha_{n-1}).$$
			\item When $n=5$ and $p=3$, $\nu_{\kappa}$ can be taken as $4q\delta-2(\alpha_1+\alpha_2+\alpha_3+\alpha_4)$, $4q\delta-(2\alpha_1+3\alpha_2+2\alpha_3+\alpha_4)$ and $4q\delta-(\alpha_1+2\alpha_2+3\alpha_3+2\alpha_4)$.
			\item When $n=7$ and  $p=4$, $\nu_{\kappa}=4q\delta-2(\alpha_1+\alpha_2+\cdots+\alpha_6)$.
			\item When $n=8$ and $p=3$,  $\nu_{\kappa}=6q\delta-2(\alpha_1+\alpha_2+\cdots+\alpha_7)$.
			\item For other cases, set $s_1=\lfloor n/p\rfloor, s_2=\lceil n/p\rceil$ and define the function $D(s)=(|sp-n|+1)s$. Set $D_p=\min\{D(s_1),D(s_2)\}$. Then 
			$$\nu_{\kappa}=D_pq\delta-\lambda_i$$ 
			for $i$ with $D(s_i)=D_p$.  Here
			\begin{align*}
				\lambda_1
				=&\sum_{j=1}^{n-1} \min\{j,r_1,n-j \} \alpha_j\\
				=&\alpha_1+2\alpha_2+3\alpha_3+\cdots+r_1\alpha_{r_1}+r_1\alpha_{r_1+1}+\cdots+r_1\alpha_{n-(r_1+1)}+\\
				&r_1\alpha_{n-r_1}+(r_1-1)\alpha_{n-(r_1-1)}+(r_1-2)\alpha_{n-(r_1-2)}+\cdots+\alpha_{n-1}
			\end{align*}
			and 
			$$\lambda_2=r_2(\alpha_1+\alpha_2+\cdots+\alpha_{n-1}),$$
			where $r_i=|s_ip-n|+1$. 
			When $D(s_1)=D(s_2)$, up to scalar, if $p|n$, then $\lambda_1=\lambda_2$ and the singular vector is the same one;
			if $p\nmid n$, then both singular vectors appear.	
		\end{enumerate}
		
	\end{theorem}
	
	\begin{theorem}\label{main2}
		Let $\kg$ be the simple Lie algebra of type $\mathsf{D}_{n}(n\geq 4)$. 
		Assume that $\kappa+\mathbf{h}^\vee=\dfrac{p}{q}$ with $p\in \mathbb{Z}_{\geq 2}$, $q\in \mathbb{Z}_{\geq 1}$ and $(p,q)=1$. 
		Then all the weights $\Lambda_{sing}=\kappa\Lambda_0-\nu_\kappa$ of minimal singular vectors in $V^{\kappa}(\mathfrak{g})$ are given as follows: 
		\begin{enumerate}[wide] 
			\item When $p\geq 2n-2$, 
			$\nu_{\kappa}=(p-2n+3)q\delta-(p-2n+3)(\epsilon_1+\epsilon_2).$
			\item When $p=3, 3|n-1$,  $\nu_{\kappa}= (2n-1)q\delta-(3\epsilon_1+2\epsilon_2+\epsilon_3).$
			\item When $p=5,n=7$,  $\nu_{\kappa}= 11q\delta-(3\epsilon_1+2\epsilon_2+2\epsilon_3+2\epsilon_4+\epsilon_5).$
			\item When $p=5,n=12$,  $\nu_{\kappa}= 20q\delta-4(\epsilon_1+\epsilon_2).$
			\item When $p=4,n=4$,  $\nu_{\kappa}$ can be taken as
			$2q\delta-2\epsilon_1,~2q\delta-(\epsilon_1+\epsilon_2+\epsilon_3+\epsilon_4)$ and $2q\delta-(\epsilon_1+\epsilon_2+\epsilon_3-\epsilon_4)$.
			\item When $p=5,n=4$,  $\nu_{\kappa}$ can be taken as
			$4q\delta-4\epsilon_1$, $4q\delta-2(\epsilon_1+\epsilon_2+\epsilon_3+\epsilon_4)$ and $4q\delta-2(\epsilon_1+\epsilon_2+\epsilon_3-\epsilon_4)$.
			\item For other cases, set 
			$s_1=\lfloor (2n-1)/p\rfloor,~ s_2=\lceil (2n-1)/p\rceil$. 
			Set $D= \min\{D_{(0)},D_{(1)},D_{(2)}\}$ where
			$$D_{(0)}=
			\begin{cases}
				n-\frac{p}{2},& 2|p,\\
				+\infty,& 2\nmid p,
			\end{cases}$$
			$$D_{(1)}=
			\begin{cases}
				s_1(n-s_1p/2),& 2\nmid s_1(p-1),\\
				s_1(2n-s_1p+1),& 2|s_1(p-1),
			\end{cases}$$
			$$D_{(2)}=
			\begin{cases}
				s_2(s_2p-2n+3),& 2\nmid s_2,\\
				s_2(s_2p/2-n+1),& 2| s_2.
			\end{cases}$$
			Then $\nu_{\kappa}= qD\delta-\lambda_i$ for $i$ with $D=D_{(i)}$, where
			$$\lambda_0=\begin{cases}
				\epsilon_1+\epsilon_2+\cdots+\epsilon_{p},
				&p<n,\\
				\epsilon_1+\epsilon_2+\cdots+\epsilon_{2n-p},
				&p>n,
			\end{cases}
			$$
			$$\lambda_1=\begin{cases}
				\epsilon_1+\epsilon_2+\cdots+\epsilon_{2n-s_1p},
				&2\nmid s_1(p-1),\\
				2(\epsilon_1+\epsilon_2+\cdots+\epsilon_{2n-s_1p+1}),
				&2| s_1(p-1),
			\end{cases}$$
			$$\lambda_2=\begin{cases}
				(s_2p-2n+3)(\epsilon_1+\epsilon_2),& 2\nmid s_2,\\
				(s_2p-2n+2)\epsilon_1,& 2| s_2.
			\end{cases}$$
		\end{enumerate}
		Furthermore, if $p=2$ and $2|n$, there are two linearly independent minimal singular vectors with respect to weight $\kappa\Lambda_0-(n-1)q\delta+\epsilon_1+\epsilon_2$. 
		For other cases, the number of linearly independent minimal singular vectors  with respect to each weight is exactly one. 
	\end{theorem}
		\begin{theorem}\label{singE}
		Let $\kg$ be the Lie algebra of type $\mathsf{E}_\ell$, $\ell=6,7,8$. 
		Assume that $\kappa+\mathbf{h}^\vee=\dfrac{p}{q}$ with $p\in \mathbb{Z}_{\geq 2}$, $q\in \mathbb{Z}_{\geq 1}$ and $(p,q)=1$.
		Then all the weights $\Lambda_{sing}^{(\kappa)}=\kappa\Lambda_0-D_pq\delta+\lambda_{sing}$ of minimal singular vectors in $V^{\kappa}(\mathfrak{g})$ are given as follows, and  the number of linearly independent singular vectors  with respect to each weight is exactly one: 
		\begin{enumerate}[wide] 
			\item When $p\geq \mathbf{h}^\vee$, we have $D_p=p-\mathbf{h}^\vee+1$ and $\lambda_{sing}=D_p\theta$;
			\item When $p<\mathbf{h}^\vee$, values of $D_p$ and $\lambda_{sing}$ are given in \Cref{tab:Esing}, where $(c_1,c_2,\dots,c_\ell)$ denotes the weight $c_1\alpha_1+c_2\alpha_2+\cdots+c_\ell\alpha_\ell$.
		\end{enumerate}
	\end{theorem}
	
	\begin{table}[H]          
		\centering
		\tiny           
		\caption{$D_p$ and $\lambda_{sing}$ for type $\mathsf{E}$ ($p < \mathbf{h}^\vee$)}
		\label{tab:Esing}
		\begin{tabular}{c||cc|cc|cc}
			\toprule
			$p$ & \multicolumn{2}{c|}{$\mathsf{E}_6$} & 
			\multicolumn{2}{c|}{$\mathsf{E}_7$} & 
			\multicolumn{2}{c}{$\mathsf{E}_8$} \\
			& $D_p$ & $\lambda_{{sing}}$ & 
			$D_p$ & $\lambda_{{sing}}$ & 
			$D_p$ & $\lambda_{{sing}}$ \\
			\midrule
			2  & 12 & (2, 3, 4, 6, 4, 2)                & 9  & (2, 2, 3, 4, 3, 2, 1)           & 30 & (4, 6, 8, 12, 10, 8, 6, 3) \\
			3  & 4  & (1, 2, 2, 3, 2, 1)                & 14 & (3, 5, 6, 9, 7, 5, 3)           & 31 & (6, 9, 12, 18, 15, 12, 9, 5) \\
			4  & 12 & (4, 6, 7, 10, 7, 4)               & 15 & (6, 6, 9, 12, 9, 6, 3)          & 12 & (4, 5, 7, 10, 8, 6, 4, 2) \\
			5  & 6  & (4, 3, 5, 6, 4, 2), (2, 3, 4, 6, 5, 4) & 14 & (6, 7, 10, 14, 11, 8, 4)  & 18 & (6, 9, 12, 18, 15, 12, 8, 4) \\
			6  & 3  & (2, 2, 3, 4, 3, 2)                & 5  & (2, 3, 4, 6, 5, 4, 3)           & 22 & (8, 12, 16, 24, 20, 16, 12, 8) \\
			7  & 8  & (5, 6, 8, 11, 8, 5)               & 4  & (2, 3, 4, 6, 5, 4, 2)           & 15 & (7, 11, 14, 21, 17, 13, 9, 5) \\
			8  & 3  & (2, 3, 4, 6, 4, 2)                & 7  & (4, 7, 8, 12, 9, 6, 3)          & 6  & (4, 5, 7, 10, 8, 6, 4, 2) \\
			9  & 2  & (2, 2, 3, 4, 3, 2)                & 6  & (4, 6, 8, 12, 9, 6, 3)          & 19 & (11, 17, 22, 33, 27, 21, 14, 7) \\
			10 & 4  & (4, 4, 6, 8, 6, 4)                & 3  & (2, 3, 4, 6, 5, 4, 3)           & 10 & (8, 10, 14, 20, 16, 12, 8, 4) \\
			11 & 6  & (6, 6, 9, 12, 9, 6)               & 6  & (4, 6, 8, 12, 10, 8, 6)         & 24 & (18, 24, 33, 48, 39, 30, 20, 10) \\
			12 & -- & --                                   & 3  & (3, 4, 6, 8, 6, 4, 2)           & 5  & (4, 6, 8, 12, 10, 8, 6, 3) \\
			13 & -- & --                                   & 6  & (6, 8, 12, 16, 12, 8, 4)        & 14 & (12, 17, 23, 34, 28, 22, 16, 8) \\
			14 & -- & --                                   & 2  & (2, 3, 4, 6, 5, 4, 2)           & 7  & (7, 10, 14, 20, 16, 12, 8, 4) \\
			15 & -- & --                                   & 4  & (4, 6, 8, 12, 10, 8, 4)         & 6  & (6, 9, 12, 18, 15, 12, 8, 4) \\
			16 & -- & --                                   & 6  & (6, 9, 12, 18, 15, 12, 6)       & 12 & (12, 18, 24, 36, 30, 24, 16, 8) \\
			17 & -- & --                                   & 8  & (8, 12, 16, 24, 20, 16, 8)      & 18 & (18, 27, 36, 54, 45, 36, 24, 12) \\
			18 & -- & --                                   & -- & --                                 & 4  & (5, 8, 10, 15, 12, 9, 6, 3) \\
			19 & -- & --                                   & -- & --                                 & 8  & (10, 16, 20, 30, 24, 18, 12, 6) \\
			20 & -- & --                                   & -- & --                                 & 3  & (4, 6, 8, 12, 10, 8, 6, 3) \\
			21 & -- & --                                   & -- & --                                 & 6  & (8, 12, 16, 24, 20, 16, 12, 6) \\
			22 & -- & --                                   & -- & --                                 & 9  & (12, 18, 24, 36, 30, 24, 18, 9) \\
			23 & -- & --                                   & -- & --                                 & 12 & (16, 24, 32, 48, 40, 32, 24, 12) \\
			24 & -- & --                                   & -- & --                                 & 2  & (4, 5, 7, 10, 8, 6, 4, 2) \\
			25 & -- & --                                   & -- & --                                 & 4  & (8, 10, 14, 20, 16, 12, 8, 4) \\
			26 & -- & --                                   & -- & --                                 & 6  & (12, 15, 21, 30, 24, 18, 12, 6) \\
			27 & -- & --                                   & -- & --                                 & 8  & (16, 20, 28, 40, 32, 24, 16, 8) \\
			28 & -- & --                                   & -- & --                                 & 10 & (20, 25, 35, 50, 40, 30, 20, 10) \\
			29 & -- & --                                   & -- & --                                 & 12 & (24, 30, 42, 60, 48, 36, 24, 12) \\
			\bottomrule
		\end{tabular}
	\end{table}

	Let $W$ be the Weyl group of $\mathfrak{g}$, and 
	$\widehat{W}$  the affine Weyl group of $\widehat{\mathfrak{g}}$. For $\lambda\in\widehat{\mathfrak{h}}^*$, denote $\Delta(\lambda)=\{\alpha\in\widehat{\Delta}^{re}:(\lambda+\widehat{\rho})(\alpha^{\vee})\in{\mathbb Z}\}$.  Let $\widehat{W}(\lambda)$ be the associated Weyl group generated by reflections $s_{\alpha}$, $\alpha\in \Delta(\lambda)$, and $\widehat{W}_0(\lambda)$ the subgroup of 
	$\widehat{W}(\lambda)$ consisting of stabilizers of $\lambda$.  By the Kashiwara-Tanisaki character theorem \cite{KT00}, for given 
	$\kappa\Lambda_0$, there exist the unique corresponding dot-dominant $\Lambda\in{\widehat{\mathfrak h}}^*$ and longest  elements $\tilde{y}$ in $\tilde{y}\widehat{W}_0(\Lambda)$ and $\tilde{z}=\tilde{z}_0\tilde{y}$ in  $\tilde{z}\widehat{W}_0(\Lambda)$, respectively, 
	such that  $$\kappa\Lambda_0=\tilde{y}\circ\Lambda,$$
	and 
	$$\Lambda_{sing}=\tilde{z}\circ \Lambda=\tilde{z}_0\tilde{y}\circ \Lambda.$$
	The second goal of this paper is to characterize $\tilde{y}$ and $\tilde{z}_0$ explicitly when  $\mathfrak{g}$ is simply-laced and $\kappa+\mathbf{h}^\vee=\dfrac{p}{q}$.  
	We first give criteria to  determine $\tilde{y}$ and $\tilde{z}$ by \Cref{criteriononlongesty,criteriononlongestz}.  Based on these criteria we finally  give the characterization of  $\tilde{y}$ and $\tilde{z}_0$ in terms of products of elements in $W$ and affine translations  $t_{q\gamma}$, for some $\gamma$ in  the root lattice of $\mathfrak{g}$. 
	Specifically, \Cref{Alongesty,Alongestz}   give the explicit characterization of $\tilde{y}$ and $\tilde{z}_0$, respectively, when $\mathfrak{g}$ is of type  $\mathsf{A}$.   \Cref{Dlongesty,Dlongestz} are results on $\tilde{y}$ and $\tilde{z}_0$ when $\mathfrak{g}$ is of type  $\mathsf{D}$. We give the characterization of $\tilde{y}$ and $\tilde{z}_0$ when $\mathfrak{g}$ is of type  $\mathsf{E}$ by \Cref{Elongesty,Elongestz}.

	\vskip 0.2cm
	The rest of this paper is organized as follows. In Section 2, we recall the universal affine vertex algebras, Shapovalov form  and vacuum determinant. In Section 3, we analyze the determinant formula and then give an equivalent condition for determining weights of minimal singular vectors.  Section 4 is dedicated to proving \Cref{JS25b}.  The proof of \Cref{main2} is given in Section 5. We also deal with the case  that $\mathfrak{g}$ is of $\mathsf{E}$-type in this section. But we
	omit the proof of \Cref{singE}.  Section  6 is dedicated to giving criteria of longest elements in left cosets of $\widehat{W}_0(\Lambda^{(\kappa)})$. Finally in Section 7, we give characterization of $\tilde{y}$ and $\tilde{z}_0$ for simply-laced $\mathfrak{g}$ by \Cref{Alongesty,Alongestz},   \Cref{Dlongesty,Dlongestz}, and \Cref{Elongesty,Elongestz}.

	\section{Preliminaries}
	\subsection{Affine Lie algebras}
	We introduce affine Lie algebras in this subsection following \cite{kac1990infinite}.
	Let $\kg$ be a finite-dimensional simple Lie algebra over $\mathbb{C}$, and $\kh$ its Cartan subalgebra.
	Let $\Delta$ be the root system, and $\Delta^+$ the set of positive roots.
	Denote the root lattice by $Q$, and its positive part by $Q^+$. 
	There is a non-degenerate invariant bilinear form:
	$$(\cdot|\cdot)=\frac{1}{2\mathbf{h}^\vee}\times \text{ Killing form of }\kg.$$  
	The triangular decomposition is $\kg=\kn_-\oplus\kh\oplus\kn_+$. For root $\alpha\in\Delta$, we denote its root space by $\kg_\alpha$, the height of root by $\rht(\alpha)$, and the Chevalley basis by $e_\alpha$. 
	The Weyl group $W$ is generated by all the reflections $s_{\alpha}(\alpha\in\Delta^+)$. Denote the length of $w$ by $\ell(w)$. Recall that the shifted action of $W$ on $\kh^*$ is  $w\circ\lambda=w(\lambda+\rho)-\rho$, where $\rho$ is the half sum of all positive roots. 
	
	The associated affine Lie algebra $\widehat{\kg}=\kg\otimes\mathbb{C}[t,t^{-1}]\oplus \mathbb{C}K$ has the relations
	\begin{align*}
		[a\otimes t^m,b\otimes t^n]=[a,b]\otimes t^{m+n}+m\delta_{m,-n}(a|b)K,~
		[a\otimes t^m,K]=[K,K]=0.
	\end{align*}
	where $a,b\in\kg$ and $m,n\in\mathbb{Z}$. 
	We shall write $a(m)$ for $a\otimes t^m$. Then $\widehat{\kg}$ has the following triangular decomposition:
	$$
	\widehat{\kg}=\widehat{\kg}_+\oplus\widehat{\kg}_0\oplus\widehat{\kg}_-,
	$$
	where 
	$$
	\widehat{\kg}_+=\kg\otimes t\mathbb{C}[t],  \ \ \widehat{\kg}_-=\kg\otimes t^{-1}\mathbb{C}[t^{-1}], \ \ \widehat{\kg}_0=\kg+{\mathbb C}K.
	$$
	Also, we have the corresponding root system $\widehat{\Delta}$, the root lattice $\widehat{Q}$.
	Let $\delta$ be the positive imaginary root such that any imaginary root is an integral multiple of it.
	Then the root system is given by
	$$\widehat{\Delta}=\{m\delta+\alpha:m\in\mathbb{Z},\alpha\in \Delta\cup\{0\}\}\setminus\{0\},$$
	and its positive part is
	$$\widehat{\Delta}^+=\{m\delta+\alpha:m\in\mathbb{Z}_{>0},\alpha\in \Delta\cup\{0\}\}\cup\Delta^+.$$
	Denote the set of real roots by $\widehat{\Delta}^{re}$. Set $\alpha^\vee=2\alpha/(\alpha|\alpha)$ for $\alpha\in \widehat{\Delta}^{re}$. 
	
	The affine Weyl group $\widehat{W}$ is generated by all reflections of positive real roots. 
	For $\gamma\in \kh^*$, define $t_{\gamma}\in\operatorname{End}\widehat{\kh}^*$ by
	$$
	t_{\gamma}(\lambda)=\lambda+(\lambda|\delta)\gamma-((\lambda|\gamma)+\frac{1}{2}(\lambda|\delta)(\gamma|\gamma))\delta, 
	$$
	where $\lambda\in\widehat{\kh}^*$. When $\kg$ is simply-laced, we have 
	$$
	\widehat{W}=T\rtimes W=W\ltimes T,
	$$
	where $T=\{t_\gamma:\gamma\in Q\}$.
	So any element in $\widehat{W}$ can be uniquely written as the product $t_\gamma w$, where $\gamma\in Q$, $w\in W$. 
	
	\subsection{Universal affine vertex algebras}
	
	Let $\mathbb{C}_\kappa (\kappa\in \mathbb{C})$ be a 1-dimensional $\kg+\widehat{\kg}_++\mathbb{C}K$-module, where $\kg+\widehat{\kg}_+$ acts trivially on $\mathbb{C}_\kappa$ and $K$ as scalar $\kappa$. Define the induced $\widehat{\kg}$-module
	$$V^\kappa(\kg):=U(\widehat{\kg})\otimes_{U(\kg+ \widehat{\kg}_++ \mathbb{C}K)}\mathbb{C}_\kappa,$$
	which is called the vacuum module.  It is known that  $V^\kappa(\kg)$ carries a structure of vertex algebra, called the universal affine vertex algebra associated to $\mathfrak{g}$ at level $\kappa$
	\cite{FZ92,LL04}.
	
	By the PBW theorem, $V^\kappa(\kg)$ has a natural
	$\mathbb{Z}_{\geq0}$-grading
	\[
	V^\kappa(\kg)
	=
	\bigoplus_{c\in\mathbb{Z}_{\geq0}}
	V^\kappa(\kg)_{[c]},
	\]
	where
	\[
	V^\kappa(\kg)_{[c]}
	=
	\operatorname{span}_{\mathbb C}
	\left\{
	x_1(-n_1)\cdots x_r(-n_r)\mathbf{1}:
	n_1+\cdots+n_r=c
	\right\}.
	\]
	Here $x_1,\dots,x_r\in\kg$ and
	$n_1,\dots,n_r\in\mathbb{Z}_{\geq1}$.
	Each $V^\kappa(\kg)_{[c]}$ is a finite-dimensional
	$\kg$-module.
	
	Assume that $\kappa\neq-\mathbf{h}^{\vee}$. Let
	$\{x_i\}_{i=1}^{\dim\kg}$ and
	$\{x^i\}_{i=1}^{\dim\kg}$ be dual bases of $\kg$ with respect
	to $(\cdot|\cdot)$. Then
	\[
	\omega_\kappa
	=
	\frac{1}{2(\kappa+\mathbf{h}^{\vee})}
	\sum_{i=1}^{\dim\kg}
	x_i(-1)x^i(-1)\mathbf{1}
	\]
	is the Sugawara conformal vector of $V^\kappa(\kg)$
	\cite{kac1990infinite,FZ92,LL04}. Write
	\[
	Y(\omega_\kappa,z)
	=
	\sum_{n\in\mathbb Z}L(n)z^{-n-2}.
	\]
	The operator $L(0)$ agrees with the above PBW grading operator:
	\[
	L(0)v=cv
	\qquad
	\text{for }v\in V^\kappa(\kg)_{[c]}.
	\]
	In particular, $c$ is called the conformal weight of $v$.
	The central charge of $V^\kappa(\kg)$ is
	$
	\frac{\kappa\dim\kg}
	{\kappa+\mathbf{h}^{\vee}}.
	$
	Let
	\[
	\Omega=\sum_{i=1}^{\dim\kg}x_ix^i
	\in Z(U(\kg))
	\]
	be the quadratic Casimir element, where elements of $\kg$ are
	identified with their zero modes. The Sugawara construction
	gives
	\begin{equation}\label{eq:Sugawara-L0}
		2(\kappa+\mathbf{h}^{\vee})L(0)
		=
		\Omega+
		2\sum_{m=1}^{\infty}
		\sum_{i=1}^{\dim\kg}
		x_i(-m)x^i(m).
	\end{equation}

	Denote the (unique) maximal submodule of $V^\kappa(\kg)$ by $N_\kappa(\kg)$, and the simple quotient module $V^\kappa(\kg)/N_\kappa(\kg)$ by $L_\kappa(\kg)$, which is also called the simple affine vertex algebra associated to $\mathfrak{g}$ at level $\kappa$. 
	
	Let $\Lambda_0 \in \widehat{\kh}^*$ be the basic fundamental weight of $\widehat{\mathfrak{g}}$ with respect to $\widehat{\mathfrak{h}}$, that is,  $\Lambda_0(h) = 0$ for $h \in \kh$ and  $\Lambda_0(K) = 1$. Obviously, the highest weight of $V^\kappa(\kg)$ is $\kappa\Lambda_0$. 
	Denote the weight space of $V^\kappa(\kg)$ (resp. $L_\kappa(\kg)$) with weight $\kappa\Lambda_0-\nu$ by $V^\kappa(\kg)_\nu$  (resp. $L_\kappa(\kg)_\nu$).
	We	recall the following result from \cite{gorelik2007simplicity}.
	\begin{theorem}[\cite{gorelik2007simplicity}]\label{nonsimple}
		The vacuum module $V^\kappa(\kg)$ is non-simple if and only if
		$$r^{\vee}(\kappa+\mathbf{h}^\vee)\in\mathbb{Q}_{\geq 0}\setminus\left\{\dfrac{1}{m}:m\in\mathbb{Z}_{\geq 1}\right\}.$$
	\end{theorem}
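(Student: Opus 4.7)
The strategy is to reduce simplicity of $V^\kappa(\kg)$ to the non-vanishing of a family of Shapovalov-type determinants, and then to read off the zero locus of these determinants as a function of $\kappa$.

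First, I would set up the contravariant pairing on $V^\kappa(\kg)$. Extend the Chevalley anti-involution of $\kg$ to an anti-involution $\omega$ of $\widehat{\kg}$ by $\omega(a(n)) = \omega(a)(-n)$ and $\omega(K) = K$, and for each $\nu \in \widehat{Q}^+$ define a bilinear form $S_\nu : V^\kappa(\kg)_\nu \times V^\kappa(\kg)_\nu \to \mathbb{C}$ via $\omega$ and projection onto the highest weight line. The sum over $\nu$ of the radicals of the $S_\nu$ is the maximal proper submodule $N_\kappa(\kg)$, so $V^\kappa(\kg)$ is simple if and only if $\det S_\nu(\kappa) \neq 0$ for every $\nu$.

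Next, I would invoke the Gorelik--Kac vacuum determinant formula, which expresses $\det S_\nu(\kappa)$, up to a nonzero constant, as a product of affine-linear factors of the form $\bigl(q(\kappa + \mathbf{h}^\vee) - p\bigr)$ indexed by ``vacuum pairs'' $(p, q)$ with $p, q \in \mathbb{Z}_{\geq 1}$, appearing with combinatorially defined multiplicities $m(\nu; p, q)$. Deriving this formula is the main technical step: one constructs a Jantzen-type filtration on the parabolic Verma module realized by $V^\kappa(\kg)$, tracks the leading terms of $\det S_\nu$ in $\kappa$, and uses the Kac--Kazhdan description of singular weights of $\widehat{\kg}$ to identify the exact linear factors that survive passage from a generic Verma module to the parabolic quotient $V^\kappa(\kg)$.

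Finally, I would match the zero set to the stated criterion. If $\kappa + \mathbf{h}^\vee$ is not a non-negative rational, no affine-linear factor vanishes, so every $S_\nu$ is non-degenerate. For $\kappa + \mathbf{h}^\vee = 1/m$ the only pair $(p, q)$ compatible with $q(\kappa + \mathbf{h}^\vee) = p$ is $(1, m)$, and the corresponding multiplicity $m(\nu; 1, m)$ can be shown to vanish for every $\nu$ that actually occurs in $V^\kappa(\kg)$, ruling out degeneracy. Conversely, if $\kappa + \mathbf{h}^\vee = p/q$ with $p \geq 2$ and $\gcd(p, q) = 1$, one exhibits a weight $\nu$ for which $m(\nu; p, q) \geq 1$, producing a singular vector and hence a nontrivial submodule.

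The main obstacle is the precise statement and proof of the vacuum determinant formula itself, and in particular the control over which vacuum pairs $(p, q)$ contribute with nonzero multiplicity after passing from the full Verma module to $V^\kappa(\kg)$. The exclusion of the locus $\{1/m : m \in \mathbb{Z}_{\geq 1}\}$ is forced by exactly this combinatorial vanishing, and its verification is the subtle step that distinguishes the vacuum module from a generic highest weight module.
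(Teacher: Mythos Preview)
The paper does not prove this theorem at all: it is quoted verbatim from Gorelik--Kac \cite{gorelik2007simplicity} and used as a black box. There is therefore no ``paper's own proof'' to compare your proposal against.

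That said, your outline is a fair high-level summary of the strategy in the original Gorelik--Kac paper: reduce simplicity to non-degeneracy of the Shapovalov form, establish the vacuum determinant formula (which the present paper also records in Section~2.3), and then analyse its zero locus in $\kappa$. The present paper in fact leans on exactly these ingredients later, rewriting $\det S_\nu(\kappa)$ as $\prod_b (\kappa+\mathbf{h}^\vee-b)^{m_b(\nu)}$ and using the Jantzen sum formula \eqref{sumformula}. Your identification of the delicate point---showing that the pair $(p,q)=(1,m)$ contributes with multiplicity zero for every $\nu$, while some $\nu$ has positive multiplicity whenever $p\geq 2$---is accurate; in Gorelik--Kac this is handled via the alternating-sum expression $E^\rho(-r\gamma)$ and a careful Weyl-group cancellation argument, not merely by ``combinatorial vanishing,'' so if you were to write this out in full that is where the real work lies.
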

	Recall that a  non-zero vector $v\in V^\kappa(\kg)$ is called singular if $v\in N_\kappa(\kg)$ and $(\kn_++\widehat{\kg}_+).v=0$.
	
	\subsection{Shapovalov form and the vacuum determinant}
	
	As shown in \cite{shapovalov1972bilinear}, there exists a unique bilinear form $S(\kappa)(\cdot,\cdot)$ on the vacuum module $V^\kappa(\kg)$ satisfying the following conditions:
	\begin{enumerate}
		\item $S(\kappa)(\mathbf{1},\mathbf{1})=1,$
		\item $S(\kappa)(u(n)v_1,v_2)=S(\kappa)(v_1,\sigma(u)(-n)v_2)\quad(u\in{\kg},v_1,v_2\in V^\kappa(\kg))$,
	\end{enumerate}
	where $\mathbf{1}$ stands for the vacuum of $V^\kappa(\kg)$ and $\sigma$ is the standard anti-involution on ${\kg}$, that is, $\sigma(h)=h$ for $h\in{\kh}$ and $\sigma(e_\alpha)=e_{-\alpha}$ for $\alpha\in{\Delta}$.
	This form is called the Shapovalov form  on $V^\kappa(\kg)$. 
	Moreover, the maximal submodule $N_\kappa(\kg)$ coincides with the radical of the Shapovalov form.

	Let $S_\nu(\kappa)$ be the restriction of $S(\kappa)$ to $V^\kappa(\kg)_{\nu}$. 
	The vacuum determinant is given in \cite{gorelik2007simplicity} by
	$$\det S_\nu(\kappa)=\prod_{r=1}^\infty \prod_{\gamma\in \widehat{\Delta}^+\setminus\Delta}\phi_{r,\gamma}(\kappa)^{d_{r,\gamma}(\nu)\dim \widehat{\kg}_\gamma}$$
	where 
	$$\phi_{r,\gamma}(\kappa)=(\Lambda_0|\gamma)\kappa+(\widehat{\rho}|\gamma)-r(\gamma|\gamma)/2,$$
		$$\widehat{R}=\prod_{\alpha\in\widehat{\Delta}^+}(1-e^{-\alpha})^{\dim \widehat{\kg}_\alpha},$$
	and
	$$\sum_\nu d_{r,\gamma}(\nu)e^{-\nu}=\widehat{R}^{-1}\sum_{w\in W}(-1)^{\ell(w)}e^{w\circ(-r\gamma)}.$$

	Notice that $\det S_\nu(\kappa)$ is a polynomial in one variable $\kappa$. 
	Up to a nonzero constant factor, we can write the vacuum determinant as a product of linear functions:
	$$\det S_\nu(\kappa)=\prod_{b\in\mathbb{C}}(\kappa+\mathbf{h}^\vee-b)^{m_b(\nu)},$$
	where $m_b(\nu)$ is the multiplicity of $(\kappa+\mathbf{h}^\vee-b)$ in the polynomial $\det S_\nu$. 
	It follows that, for a given $\kappa$, the weight space $N_{\kappa}(\kg)_\nu\neq 0$ if and only if $m_{\kappa+\mathbf{h}^\vee}(\nu)\neq 0$.

	\subsection{Jantzen filtration and sum formula}
	
	Let $T$ be an indeterminate. Define the following $\widehat{\kg}-\mathbb{C}[T]$ bimodule:
	\begin{equation}
		M_\kappa(\kg,T):=U(\widehat{\kg})\otimes_{U(\kg+\widehat{\kg}_++\mathbb{C}K)}\mathbb{C}[T],
	\end{equation}
	where $\mathbb{C}[T]$ is a $\kg+\widehat{\kg}_++\mathbb{C}K$-module, where $\kg+\widehat{\kg}_+$ acts trivially, and $K$ acts by the multiplication $\kappa+T$.
	In particular, when $T=0$ we have $M_\kappa(\kg,0)=V^\kappa(\kg)$. 
	
	The PBW grading on $V^\kappa(\kg)$ extends naturally to
	$M_\kappa(\kg,T)$:
	\[
	M_\kappa(\kg,T)
	=
	\bigoplus_{c\in\mathbb{Z}_{\geq0}}
	(M_\kappa(\kg,T))_{[c]}.
	\]
	For every $c$, the subspace
	$(M_\kappa(\kg,T))_{[c]}$ is a free
	$\mathbb{C}[T]$-module of finite rank.
	
	When $\kappa\neq-\mathbf{h}^{\vee}$, the element
	$\kappa+T+\mathbf{h}^{\vee}$ is invertible in
	$\mathbb{C}[T]_{(T)}$. Hence, after localizing at $(T)$, the
	Sugawara construction applies to $M_\kappa(\kg,T)$ with level
	$\kappa+T$. Its zero mode $L(0)$ agrees with the PBW grading,
	and \eqref{eq:Sugawara-L0} becomes
	\begin{equation}\label{eq:Sugawara-L0-T}
		2(\kappa+T+\mathbf{h}^{\vee})L(0)
		=
		\Omega+
		2\sum_{m=1}^{\infty}
		\sum_{i=1}^{\dim\kg}
		x_i(-m)x^i(m).
	\end{equation}
	
	As shown in \cite{gorelik2007simplicity,jantzen1977kontravariante}, there exists a unique $\mathbb{C}[T]$-bilinear form $S(\kappa,T): M_\kappa(\kg,T)\otimes M_\kappa(\kg,T)\to\mathbb{C}[T]$ on
	$M_\kappa(\kg,T)$ satisfying analogous properties of Shapovalov form.
	For $r\in\mathbb{Z}_{\geq 0}$, define 
	\begin{equation}
		M_\kappa^r(\kg,T):=
		\{v\in M_\kappa(\kg,T):S(\kappa,T)(v,v')\in T^r\mathbb{C}[T],~\forall v'\in M_\kappa(\kg,T)\}.
	\end{equation}
	This gives a decreasing filtration of $M_\kappa(\kg,T)$ as a bimodule.
	
	Specializing this filtration at $T=0$, we obtain the Jantzen filtration $\mathcal{F}^r(V^\kappa(\kg))$ on $V^\kappa(\kg)$. 
	It is known that $\mathcal{F}^0(V^\kappa(\kg))=V^\kappa(\kg)$, $\mathcal{F}^1(V^\kappa(\kg))=N_\kappa(\kg)$, and $\bigcap_{r=1}^\infty \mathcal{F}^r(V^\kappa(\kg))=0$. 
	We have the following Jantzen sum formula \cite{gorelik2007simplicity,jantzen1977kontravariante}:
	\begin{align}\label{sumformula}
		\sum_{r=1}^\infty \dim \mathcal{F}^r(V^\kappa(\kg)_\nu)= m_{\kappa+\mathbf{h}^\vee}(\nu),
	\end{align}
	where  $\mathcal{F}^r(V^\kappa(\kg)_\nu)=\mathcal{F}^r(V^\kappa(\kg))\cap V^\kappa(\kg)_\nu$.

	\section{Minimal conformal weight of $N_\kappa(\kg)$}

	Define the formal character
	\begin{align}\label{defMk}
		M_\kappa:=\widehat{R}\sum_\nu m_{\kappa+\mathbf{h}^\vee}(\nu)e^{-\nu}.
	\end{align}
	It follows from the Jantzen sum formula \eqref{sumformula} that
	\begin{align}\label{sumformulaMk}
		\sum_{r=1}^\infty \operatorname{ch} \mathcal{F}^r(V^\kappa(\kg))= \widehat{R}^{-1}M_\kappa.
	\end{align}
	This formula shows that the vacuum module $V^\kappa(\kg)$ is non-simple if and only if $M_\kappa\neq 0$, since the Jantzen filtration is decreasing and $\mathcal{F}^1(V^\kappa(\kg))=N_\kappa(\kg)$.
	
	When $\kappa\neq-\mathbf{h}^\vee$, there is an equivalent form of $M_\kappa$ given in  \cite{gorelik2007simplicity}: 
	\begin{align}\label{M_k}
		M_\kappa
		=\sum_{(r,\gamma):\phi_{r,\gamma}(\kappa)=0}E^\rho(-r\gamma),
	\end{align}
	where
	\begin{align*}
		E^\rho(\lambda)=\sum_{w\in W}(-1)^{\ell(w)} e^{w\circ(\lambda)}.
	\end{align*}
	For any weight $\lambda\in\kh^*$, we shall denote $E^\rho(\lambda-\rho)$ also by $E(\lambda)$, that is,
	$$E(\lambda)=e^{-\rho}\sum_{w\in W}(-1)^{\ell(w)} e^{w(\lambda)}.$$
	It is clear that 
	\begin{align}\label{EWlambda}
		E^\rho(w\circ \lambda)=(-1)^{\ell(w)}E^\rho(\lambda),~~~
		E(w(\lambda))=(-1)^{\ell(w)}E(\lambda).
	\end{align}
	We have the following lemma.
	\begin{lemma}\label{weyl_transition}
		\begin{enumerate}[wide] 
			\item If $\lambda_1=w\circ(\lambda_2)$ for some $w\in W$, then 
			$E^\rho(\lambda_1)=(-1)^{\ell(w)}E^\rho(\lambda_2)$. 
			\item If $E^\rho(\lambda_1)=\pm E^\rho(\lambda_2)\neq 0$, then  $\lambda_1=w\circ(\lambda_2)$ for some $w\in W$.
			\item There exists some  $w\in W\setminus\{1\}$ such that $w\circ \lambda=\lambda$
			 if and only if $E^\rho(\lambda)=0$.
		\end{enumerate}
	\end{lemma}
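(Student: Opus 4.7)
My plan is to prove the three parts in the order (1), (3), (2), since part (2) is the most delicate and will use part (3).

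Part (1) is essentially a restatement of equation \eqref{EWlambda}: the shifted action is an action (i.e.\ $(w_1w_2)\circ\lambda = w_1\circ(w_2\circ\lambda)$), and $(-1)^{\ell(\cdot)}$ is a group homomorphism $W\to\{\pm 1\}$, so reindexing $w'=w_1w$ in the defining sum of $E^{\rho}(w_1\circ \lambda_2)$ gives the stated identity. For part (3), the shifted stabilizer of $\lambda$ coincides with the ordinary stabilizer of $\lambda+\rho$. If this stabilizer is trivial, then the $|W|$ exponentials $e^{w\circ \lambda}$ in $E^{\rho}(\lambda)$ are pairwise distinct, each carrying coefficient $\pm 1$, so the sum is manifestly nonzero. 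Conversely, if the stabilizer is non-trivial, then by the classical theorem of Steinberg (the stabilizer of a point under a finite reflection group is generated by the reflections it contains), it contains some reflection $s_\alpha$, which has odd length. Using $s_\alpha\circ \lambda=\lambda$, the involution $w'\mapsto w's_\alpha$ on $W$ pairs terms in $E^{\rho}(\lambda)$ that carry equal exponentials but opposite signs, forcing $E^{\rho}(\lambda)=0$.

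For part (2), assume $E^{\rho}(\lambda_1)=\pm E^{\rho}(\lambda_2)\ne 0$. By part (3), the shifted stabilizers of both $\lambda_1$ and $\lambda_2$ are trivial, so $\lambda_i+\rho$ lies in the interior of some Weyl chamber. Choose $w_i\in W$ so that $\mu_i:=w_i\circ\lambda_i$ satisfies $\mu_i+\rho$ dominant regular; by part (1), $E^{\rho}(\lambda_i)=(-1)^{\ell(w_i)}E^{\rho}(\mu_i)$, so it suffices to show $\mu_1=\mu_2$. In the expansion
\[
E^{\rho}(\mu_i)=\sum_{w\in W}(-1)^{\ell(w)}e^{w(\mu_i+\rho)-\rho},
\]
the weight $\mu_i$ appears (from $w=1$) and, because $\mu_i+\rho$ is dominant, every other $w(\mu_i+\rho)-\rho$ is strictly smaller than $\mu_i$ in the standard partial order on $\kh^*$. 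Thus $\mu_i$ is the unique maximal weight in the support of $E^{\rho}(\mu_i)$, and its coefficient is $+1$. Comparing the maximal supports of the two sides of $E^{\rho}(\mu_1)=\pm E^{\rho}(\mu_2)$ forces $\mu_1=\mu_2$, which gives $\lambda_1=(w_1^{-1}w_2)\circ\lambda_2$, as desired.

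The main technical input is the use of Steinberg's theorem in part (3); the rest is purely formal manipulation of the alternating sum together with a leading-term comparison in the dominance order. No further lemmas beyond what is already recalled in Section 2 are needed.
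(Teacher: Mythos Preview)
Your proof is correct and follows essentially the same strategy as the paper: part~(1) via reindexing, part~(3) via Steinberg's theorem on stabilizers in reflection groups. For part~(2) the paper takes a shorter route than your dominant-chamber leading-term comparison: since $E^\rho(\lambda_1)\neq 0$, some exponential $e^{w_1\circ\lambda_1}$ occurs with nonzero coefficient, and since $E^\rho(\lambda_1)=\pm E^\rho(\lambda_2)$ it must equal some $e^{w_2\circ\lambda_2}$, giving $\lambda_1=(w_1^{-1}w_2)\circ\lambda_2$ directly.
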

	\begin{proof}
		\begin{enumerate}[wide]
			\item This follows directly from \eqref{EWlambda}.
			\item The condition $E^\rho(\lambda_1)=\pm E^\rho(\lambda_2)\neq 0$ implies that $e^{w_1\circ(\lambda_1)}=e^{w_2\circ (\lambda_2)}$ for some $w_1,w_2\in W$. So $\lambda_1=(w_1^{-1}w_2)\circ (\lambda_2)$.
			\item If the sum $E^\rho(\lambda)$ is zero, we have   $(-1)^{\ell(w)}e^{w\circ(\lambda)}=-e^\lambda$ for some $w\in W\setminus\{1\}$. 
			The ``only if'' part follows from the fact that the stabilizer for a given weight in $W$  is generated by reflections it contains.\qedhere
		\end{enumerate}
	\end{proof}
	
	\begin{lemma}\label{ReflectionRelation}
		For any $a\in\mathbb{Z}$ and $\alpha\in\Delta$, we have
		$$E^\rho(-a\alpha)=-E^\rho((a-(\rho|\alpha^\vee))\alpha).$$
	\end{lemma}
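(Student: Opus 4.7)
My plan is to derive this as a direct application of \Cref{weyl_transition}(1) using the reflection $s_\alpha$ associated to $\alpha$. Since $s_\alpha$ is a reflection on $\kh^*$, it has determinant $-1$, so $(-1)^{\ell(s_\alpha)}=-1$; by \Cref{weyl_transition}(1) it therefore suffices to verify that
\[
s_\alpha \circ (-a\alpha) \;=\; \bigl(a-(\rho|\alpha^\vee)\bigr)\alpha.
\]

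To establish this identity I will just unwind the shifted action. By definition $s_\alpha\circ \lambda = s_\alpha(\lambda+\rho)-\rho$, and the reflection formula gives $s_\alpha(\mu)=\mu-(\mu|\alpha^\vee)\alpha$. Applying this to $\mu = -a\alpha+\rho$ and using $s_\alpha(\alpha)=-\alpha$, the computation reduces in one line to $a\alpha - (\rho|\alpha^\vee)\alpha$, which is exactly the target. Then \Cref{weyl_transition}(1) yields
\[
E^\rho\bigl((a-(\rho|\alpha^\vee))\alpha\bigr) = -E^\rho(-a\alpha),
\]
which is equivalent to the claimed identity.

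There is no real obstacle here: the only thing to be a little careful about is the sign $(-1)^{\ell(s_\alpha)}=-1$, which holds for any root reflection (not only for simple reflections) because $\det s_\alpha=-1$ on $\kh^*$ and the sign character of $W$ agrees with the determinant. Everything else is a one-line computation with the shifted action, so the proof will be very short.
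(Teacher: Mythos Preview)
Your proof is correct and is essentially the same as the paper's: the paper also computes $s_\alpha\circ(-a\alpha)=(a-(\rho|\alpha^\vee))\alpha$ and then invokes \Cref{weyl_transition} with $w=s_\alpha$. Your extra remark that $(-1)^{\ell(s_\alpha)}=-1$ for any root $\alpha$ (via $\det s_\alpha=-1$) is a welcome clarification, since $\alpha$ need not be simple.
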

	\begin{proof}
		Note that
		$s_\alpha\circ(-a\alpha)=(a-(\rho|\alpha^\vee))\alpha$. 
		Then use \Cref{weyl_transition} with $w=s_\alpha$. 
	\end{proof}

	\subsection{Non-critical  levels}
	
	From now on, we consider the case that $V^\kappa(\kg)$ is non-simple and that $\kappa\neq -\mathbf{h}^\vee$. By the equivalent condition given in \Cref{nonsimple}, assume that 
	$$r^\vee(\kappa+\mathbf{h}^\vee)=\dfrac{p}{q} \quad
	\text{with}\quad p\in \mathbb{Z}_{\geq 2},~ q\in \mathbb{Z}_{\geq 1},~ (p,q)=1.$$
	
	Recall the formula \eqref{M_k} for $M_\kappa$.  
	Each root  $\gamma\in\widehat{\Delta}^+\setminus\Delta$ can  be written as $m\delta+\alpha$, where $m\in\mathbb{Z}_{\geq 1}$ and $\alpha\in\Delta\cup\{0\}$. 
	By $\widehat{\rho}=\rho+\mathbf{h}^\vee\Lambda_0$ and $(\delta|\Lambda_0)=1$, we have
	\begin{align*}
		\phi_{r,m\delta+\alpha}(\kappa)
		=m(\kappa+\mathbf{h}^\vee)+(\rho|\alpha)-r\|\alpha\|^2/2.
	\end{align*} 
	So $\phi_{r,m\delta+\alpha}(\kappa)=0$ if and only if $\alpha\in\Delta$ and $r,m\in\mathbb{Z}_{\geq 1}$ satisfy that
	$$mp+r^\vee((\rho|\alpha)-r\|\alpha\|^2/2) q=0.$$
	Using $(p,q)=1$, it follows that
	\begin{align*}
		m=qs,~r^\vee(r\|\alpha\|^2/2-(\rho|\alpha))=ps, ~\text{for some }s\in\mathbb{Z}_{\geq 1}.
	\end{align*}
	Thus we  obtain that 
	\begin{align}\label{MkLayerForm}
		M_\kappa=
		\sum_{r,s\in\mathbb{Z}_{\geq 1}}\sum_{\alpha\in\Delta_{r,s}^p}E^\rho(-r(qs\delta+\alpha)),
	\end{align}
	where $\Delta_{r,s}^p:=\{\alpha\in\Delta : r^\vee(r\|\alpha\|^2/2-(\rho|\alpha))=ps\}$.
	For $D\in\mathbb{Z}_{\geq 1}$, define 
	\begin{align*}
		M_{p,D}:=\sum_{r,s\in\mathbb{Z}_{\geq 1}:rs=D}\sum_{\alpha\in\Delta_{r,s}^p}E^\rho(-r\alpha).
	\end{align*}
	It follows from \eqref{MkLayerForm} that
	\begin{equation}\label{MkMpD}
		M_\kappa=\sum_{D\in\mathbb{Z}_{\geq 1}} e^{-Dq\delta}M_{p,D}.
	\end{equation}
	
	Denote the simple $\kg$-module with highest weight $\lambda$
	by ${L}_\kg(\lambda)$. By complete reducibility, each
	finite-dimensional graded component of a $\kg$-submodule of
	$V^\kappa(\kg)$ is a direct sum of modules
	${L}_\kg(\lambda)$ with $\lambda$ dominant.
	
	\begin{lemma}\label{minimalsingnocapF2}
		Let $D$ be the minimal conformal weight of $N_\kappa(\kg)$ with $\kappa\neq -\mathbf{h}^{\vee}$. Then 
		$$\mathcal{F}^2(V^\kappa(\kg))_{[D]}=\{0\}.$$
		In particular, $(N_\kappa(\kg))_{[D]}\cap \mathcal{F}^2(V^\kappa(\kg))=\{0\}$.
	\end{lemma}
	
	\begin{proof}
		Suppose, to the contrary, that there 
		exists
		$ v\in(M_\kappa^2(\kg,T))_{[D]}$
		whose specialization at $T=0$ is nonzero. Using the PBW
		identification, write $v=v_1+Tv_2+T^2v_3$, 
		where $0\neq v_1\in
		(U(\widehat{\kg}_-)\mathbf{1})_{[D]}$.
		Since $T^2v_3\in M_\kappa^2(\kg,T)$, replacing $v$ by
		$v-T^2v_3$, we may assume that $v=v_1+Tv_2$.
		
		We first claim that, for every $c<D$,
		\begin{equation}\label{eq:lower-degree-M2}
			(M_\kappa^2(\kg,T))_{[c]}
			=
			T^2(M_\kappa(\kg,T))_{[c]}.
		\end{equation}
		Indeed, we have $(N_\kappa(\kg))_{[c]}=0$ by the minimality of $D$.
		Since the
		restriction of $S(\kappa)$ to $V^\kappa(\kg)_{[c]}$ is
		non-degenerate, the determinant 
		of $S(\kappa,T)$ on $(M_\kappa(\kg,T))_{[c]}$ is not divisible
		by $T$. It follows directly from the definition of
		$M_\kappa^2(\kg,T)$ that \eqref{eq:lower-degree-M2} holds.
		
		Since $M_\kappa^2(\kg,T)$ is a
		$\widehat{\kg}$-submodule, for every $x\in\kg$ and
		$m\in\mathbb{Z}_{\geq1}$, we have
		\begin{equation}\label{eq:positive-modes-T2}
			x(m)v\in(M_\kappa^2(\kg,T))_{[D-m]}\subset T^2M_\kappa(\kg,T).
		\end{equation}
		Applying \eqref{eq:Sugawara-L0-T} to $v$ and using
		\eqref{eq:positive-modes-T2}, we obtain
		\[
		\Omega v
		\equiv
		2D(\kappa+T+\mathbf{h}^{\vee})v
		\pmod{T^2M_\kappa(\kg,T)}.
		\]
		Substituting $v=v_1+Tv_2$ and comparing the
		constant terms and the coefficients of $T$, we obtain
		\begin{align}
			\bigl(\Omega-2D(\kappa+\mathbf{h}^{\vee})\bigr)v_1
			&=0,\label{eq:Casimir-v1}\\
			\bigl(\Omega-2D(\kappa+\mathbf{h}^{\vee})\bigr)v_2
			&=2Dv_1.\label{eq:Casimir-v2}
		\end{align}
		
		The space $V^\kappa(\kg)_{[D]}$ is a finite-dimensional
		$\kg$-module and hence is completely reducible. Therefore,
		the Casimir operator $\Omega$ acts semisimply on it. It follows
		that
		\[
		\ker\bigl(\Omega-2D(\kappa+\mathbf{h}^{\vee})\bigr)
		\cap
		\operatorname{Im}
		\bigl(\Omega-2D(\kappa+\mathbf{h}^{\vee})\bigr)
		=\{0\}.
		\]
		However, \eqref{eq:Casimir-v1} shows that $2Dv_1$ belongs to
		the kernel, whereas \eqref{eq:Casimir-v2} shows that $2Dv_1$
		belongs to the image. Since $D>0$ and $v_1\neq0$, this is a
		contradiction. Therefore, we have $\mathcal{F}^2(V^\kappa(\kg))_{[D]}=\{0\}$.
	\end{proof}
	
	\begin{theorem}\label{thmNk}
		Let $\kg$ be a simple finite-dimensional Lie algebra. 
		Assume that $r^\vee(\kappa+\mathbf{h}^\vee)=\dfrac{p}{q}$ with $p\in \mathbb{Z}_{\geq 2}$, $q\in \mathbb{Z}_{\geq 1}$ and $(p,q)=1$. 
		Let $D_p$ be the minimal integer $D$ such that $M_{p,D}\neq 0$, and assume that 
		\begin{align}\label{M_pD2E}
			M_{p,D_p}=a_1E^\rho(\lambda_1)+a_2E^\rho(\lambda_2)+\cdots+a_mE^\rho(\lambda_m),
		\end{align}
		where $a_1,a_2,\dots,a_m\in\mathbb{Z}_{>0}$, and $\lambda_1,\lambda_2,\dots,\lambda_m\in\kh^*$ are dominant and distinct. Then for the maximal submodule $N_\kappa(\kg)$, we have:
		\begin{enumerate}
			\item For $c<D_pq$, $N_\kappa(\kg)_{[c]}=0$.
			\item As a $\kg$-module, 
			$$N_\kappa(\kg)_{[D_pq]}=a_1{L}_\kg(\lambda_1)\oplus a_2{L}_\kg(\lambda_2)\oplus\cdots\oplus a_m{L}_\kg(\lambda_m).$$
			Moreover, up to scalars, there are exactly $a_i$ linearly independent singular vector(s) of weight $\kappa\Lambda_0-D_pq\delta+\lambda_i$ for each $i$, while no other linearly independent singular vector associated with conformal weight $D_pq$ exists.
		\end{enumerate}
		
	\end{theorem}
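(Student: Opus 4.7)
The plan is to compare $\operatorname{ch} N_\kappa(\kg)$, and more generally the characters of the Jantzen layers $\mathcal{F}^r(V^\kappa(\kg))$, with $\widehat{R}^{-1}M_\kappa$ via the sum formula \eqref{sumformulaMk}, using the layered decomposition \eqref{MkMpD} of $M_\kappa$ together with the uniqueness statement \Cref{singularone}.

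First I would split $\widehat{R}^{-1}$ by conformal weight. Writing $\widehat{R}=R_{\mathrm{fin}}\cdot \widehat{R}'$, with $R_{\mathrm{fin}}=\prod_{\alpha\in\Delta^+}(1-e^{-\alpha})$ the finite Weyl denominator and $\widehat{R}'$ the product over the remaining affine positive roots (each having positive $\delta$-coefficient), one sees that the conformal-weight-$0$ part of $\widehat{R}^{-1}$ is exactly $R_{\mathrm{fin}}^{-1}$ while every other term has strictly positive conformal weight. Combined with \eqref{MkMpD} and $M_{p,D}=0$ for $D<D_p$, this gives
\begin{equation*}
\bigl(\widehat{R}^{-1}M_\kappa\bigr)_{[c]}=0\quad\text{for all } c<D_pq,
\end{equation*}
and at $c=D_pq$ only the contribution with $c_1=0,\,c_2=D_pq$ survives, so
\begin{equation*}
\bigl(\widehat{R}^{-1}M_\kappa\bigr)_{[D_pq]}=R_{\mathrm{fin}}^{-1}\,e^{-D_pq\delta}M_{p,D_p}=e^{-D_pq\delta}\sum_{i=1}^m a_i\operatorname{ch}\overline{L}(\lambda_i),
\end{equation*}
by the Weyl character formula $R_{\mathrm{fin}}^{-1}E^\rho(\lambda_i)=\operatorname{ch}\overline{L}(\lambda_i)$, valid because each $\lambda_i$ lies in the root lattice and dominance makes $\lambda_i+\rho$ regular.

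Part (1) is immediate: \eqref{sumformula} together with $\mathcal{F}^1(V^\kappa(\kg))=N_\kappa(\kg)$ gives $\dim N_\kappa(\kg)_\nu\leq m_{\kappa+\mathbf{h}^\vee}(\nu)$, which is the coefficient of $e^{-\nu}$ in $\widehat{R}^{-1}M_\kappa$ and hence vanishes whenever $\nu$ has conformal weight below $D_pq$. For part (2), I would then observe that once (1) is in hand, any vector in $\mathcal{F}^r(V^\kappa(\kg))_{[D_pq]}\subset N_\kappa(\kg)$ is automatically annihilated by $\widehat{\kg}_+$: any nontrivial lowering would produce an element of $N_\kappa(\kg)$ at strictly smaller conformal weight, which is zero. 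Consequently, in any $\kg$-isotypic decomposition $\mathcal{F}^r(V^\kappa(\kg))_{[D_pq]}=\bigoplus_\mu n_\mu^{(r)}\overline{L}(\mu)$, the $\kn_+$-highest weight vectors are already singular vectors of $V^\kappa(\kg)$, so \Cref{singularone} forces $n_\mu^{(r)}\leq 1$; the decreasing nature of the filtration forces $(n_\mu^{(r)})_{r\geq 1}$ to be non-increasing.

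Matching characters via \eqref{sumformulaMk} at conformal weight $D_pq$ then yields
\begin{equation*}
\sum_\mu\Bigl(\sum_{r\geq 1}n_\mu^{(r)}\Bigr)\operatorname{ch}\overline{L}(\mu)=\sum_{i=1}^m a_i\operatorname{ch}\overline{L}(\lambda_i),
\end{equation*}
and linear independence of the simple $\kg$-characters gives $\sum_r n_{\lambda_i}^{(r)}=a_i$ and $\sum_r n_\mu^{(r)}=0$ for $\mu\notin\{\lambda_1,\ldots,\lambda_m\}$. Since each $n_{\lambda_i}^{(r)}\in\{0,1\}$ is non-increasing with positive total $a_i$, we are forced to $n_{\lambda_i}^{(1)}=1$ and $n_\mu^{(1)}=0$ otherwise, which is exactly part (2). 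The delicate step I expect to have to handle carefully is precisely this final interplay: one must identify, at the minimal conformal weight $D_pq$, the singular vectors of $\widehat{\kg}$ with $\kn_+$-highest weight vectors of $\kg$, and then marry the monotonicity of the Jantzen filtration with the uniqueness bound of \Cref{singularone} to force the multiplicity of each $\overline{L}(\lambda_i)$ in $N_\kappa(\kg)=\mathcal{F}^1(V^\kappa(\kg))$ to equal $1$, independently of the possibly large size of the coefficient $a_i$ in $M_{p,D_p}$.
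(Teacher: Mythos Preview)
Your proposal is correct and follows essentially the same approach as the paper: both proofs restrict the Jantzen sum formula \eqref{sumformulaMk} to conformal weights $\leq D_pq$, invoke the Weyl character formula $R^{-1}E^\rho(\lambda_i)=\operatorname{ch}\overline{L}(\lambda_i)$, use the decreasing Jantzen filtration together with part (1) to identify $\kn_+$-highest weight vectors at level $D_pq$ with genuine singular vectors, and then appeal to \Cref{singularone} to pin the multiplicities down to $1$. The only cosmetic difference is that the paper applies \Cref{singularone} directly to $N_\kappa(\kg)=\mathcal{F}^1$ to force each $b_i=1$, whereas you apply it to every $\mathcal{F}^r$ to get $n_\mu^{(r)}\in\{0,1\}$ and then use monotonicity; these are equivalent packagings of the same argument.
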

	
	\begin{proof}
		Combining \eqref{sumformulaMk} and \eqref{MkMpD}, 
		we have
		\begin{align}\label{sumformulaexpand}
			\sum_{r=1}^\infty \operatorname{ch} \mathcal{F}^r(V^\kappa(\kg)) 
			=\widehat{R}^{-1}(e^{-D_pq\delta}M_{p,D_p}+\sum_{D>D_p} e^{-Dq\delta}M_{p,D}).
		\end{align}
		
		\begin{enumerate}[wide]
			\item Focusing on the terms associated with conformal weight $c<D_pq$ in \eqref{sumformulaexpand}, we  obtain that
			$$\sum_{r=1}^\infty \operatorname{ch}  \mathcal{F}^r(V^\kappa(\kg))_{[c]}=0.$$
			Recall that $\mathcal{F}^1(V^\kappa(\kg))=N_\kappa(\kg)$ and that $\mathcal{F}^r(V^\kappa(\kg))\subset \mathcal{F}^1(V^\kappa(\kg))$ for all $r$. It follows that $N_\kappa(\kg)_{[c]}=0$.
			
			\item Focusing on the terms associated with conformal weight $D_pq$ in \eqref{sumformulaexpand}, we can obtain that
			$$\sum_{r=1}^\infty \operatorname{ch}  {\mathcal{F}^r(V^\kappa(\kg))}_{[D_pq]}=R^{-1}M_{p,D_p},~~~\text{ where }R=\prod_{\alpha\in\Delta^+}(1-e^{-\alpha}).$$
			So 
			\begin{align}\label{chsimplemulti}
				R^{-1}M_{p,D_p}
				=\sum_{\lambda}\sum_{r=1}^\infty [  {\mathcal{F}^r(V^\kappa(\kg))}_{[D_pq]}:{L}_\kg(\lambda)]\operatorname{ch} {L}_\kg(\lambda),
			\end{align}
			where $[M:{L}_\kg(\lambda)]$ is the multiplicity of ${L}_\kg(\lambda)$ in  $M$.
			Note that 
			$R\operatorname{ch} {L}_\kg(\lambda)=E^\rho(\lambda)$ holds for dominant $\lambda$.
			So the assumption \eqref{M_pD2E} is equivalent to that
			$$R^{-1}M_{p,D_p}=a_1\operatorname{ch} {L}_\kg(\lambda_1)+a_2\operatorname{ch} {L}_\kg(\lambda_2)+\cdots+a_m\operatorname{ch} {L}_\kg(\lambda_m).$$
			Since the Jantzen filtration is decreasing, the coefficient of $\operatorname{ch} {L}_\kg(\lambda)$ in \eqref{chsimplemulti} is nonzero if and only if 
			$$[N_\kappa(\kg)_{[D_pq]}:{L}_\kg(\lambda)]=[{\mathcal{F}^1(V^\kappa(\kg))}_{[D_pq]}:{L}_\kg(\lambda)]\neq 0.$$
			It follows from \Cref{minimalsingnocapF2} that 
			$$\operatorname{ch} N_\kappa(\kg)_{[D_pq]}=a_1\operatorname{ch} {L}_\kg(\lambda_1)+a_2\operatorname{ch} {L}_\kg(\lambda_2)+\cdots+a_m\operatorname{ch} {L}_\kg(\lambda_m).$$
			Since $\dim\operatorname{Hom}({L}_\kg(\lambda),{L}_\kg(\mu))=\delta_{\lambda,\mu}$ for $\lambda,\mu$ both dominant, we have
			$$N_\kappa(\kg)_{[D_pq]}=a_1{L}_\kg(\lambda_1)\oplus a_2{L}_\kg(\lambda_2)\oplus\cdots\oplus a_m{L}_\kg(\lambda_m)$$
			as a $\kg$-module. Then the result follows.
			\qedhere
		\end{enumerate}
	\end{proof}

	We have the following corollary which is also covered in \cite{KW89}.
	
	\begin{coro}\label{integrable_case}
		Let $\kg$ be a simple finite-dimensional Lie algebra. 
		Assume that $r^\vee(\kappa+\mathbf{h}^\vee)=\dfrac{p}{q}$ with $p\in \mathbb{Z}_{\geq 2}$, $q\in \mathbb{Z}_{\geq 1}$, $(p,q)=1$ and $\frac{p}{r^\vee}-\mathbf{h}^\vee\in\mathbb{Z}_{\geq 0}$.
		Then the weight of singular vectors with the minimal conformal weight is
		$$\kappa\Lambda_0-(\frac{p}{r^\vee}-\mathbf{h}^\vee+1)(q\delta-\theta),$$
		where $\theta$ is the highest root of $\kg$.
	\end{coro}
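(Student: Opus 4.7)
The strategy is to apply \Cref{thmNk} directly: compute $D_p$ and show that $M_{p,D_p}$ reduces to a single term $E^\rho(\lambda)$ with $\lambda=(p-\mathbf{h}^\vee+1)\theta$. A contribution to $M_{p,D}$ is parameterized by a triple $(\alpha,r,s)$ with $\alpha\in\Delta$, $r,s\in\mathbb{Z}_{\geq 1}$, $rs=D$, and $r\|\alpha\|^2/2-(\rho|\alpha)=ps$.

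I would first exhibit a witness: take $\alpha=-\theta$ (so $\|\alpha\|^2=2$ and $(\rho|\alpha)=-(\mathbf{h}^\vee-1)$) and $s=1$. The equation reduces to $r=p-\mathbf{h}^\vee+1$, which is a positive integer by the hypothesis $p\geq\mathbf{h}^\vee$. This triple contributes $E^\rho(r\theta)=E^\rho((p-\mathbf{h}^\vee+1)\theta)$ to $M_{p,p-\mathbf{h}^\vee+1}$. Since $(p-\mathbf{h}^\vee+1)\theta+\rho$ is strictly dominant (hence regular under ordinary $W$-action), this term is nonzero by \Cref{weyl_transition}(3), so $D_p\leq p-\mathbf{h}^\vee+1$.

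Next I would show $D=rs\geq p-\mathbf{h}^\vee+1$ for any contributing triple, with equality only for the witness above. Solving the defining equation: for long $\alpha$, $r=ps+(\rho|\alpha)$, hence $D=ps^2+s(\rho|\alpha)$; for short $\alpha$ (non-simply-laced case, tier ratio $r^\vee\geq 2$), using $(\rho|\alpha^\vee)=r^\vee(\rho|\alpha)$ gives $r=r^\vee ps+(\rho|\alpha^\vee)$, hence $D=r^\vee ps^2+s(\rho|\alpha^\vee)$. Using the bounds $(\rho|\alpha)\geq -(\mathbf{h}^\vee-1)$ for long roots (with equality iff $\alpha=-\theta$, since $(\rho|\cdot)$ attains its minimum over roots at $-\theta$) and $(\rho|\alpha^\vee)\geq -(\mathbf{h}^\vee-1)$ in general, we reduce to the estimates $s\mapsto ps^2-s(\mathbf{h}^\vee-1)$ and $s\mapsto 2ps^2-s(\mathbf{h}^\vee-1)$ respectively. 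Both functions are strictly increasing on $[1,\infty)$ because $2p\geq 2\mathbf{h}^\vee>\mathbf{h}^\vee-1$, with minima at $s=1$ equal to $p-\mathbf{h}^\vee+1$ and $2p-\mathbf{h}^\vee+1>p-\mathbf{h}^\vee+1$ respectively. Hence the minimum $p-\mathbf{h}^\vee+1$ is realized only by $(\alpha,s)=(-\theta,1)$.

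Thus $D_p=p-\mathbf{h}^\vee+1$ and $M_{p,D_p}=E^\rho((p-\mathbf{h}^\vee+1)\theta)$, a single $E^\rho(\lambda)$ with $\lambda=(p-\mathbf{h}^\vee+1)\theta$ dominant. Applying \Cref{thmNk}(2) produces a unique minimal singular vector of weight
\[\kappa\Lambda_0-(p-\mathbf{h}^\vee+1)q\delta+(p-\mathbf{h}^\vee+1)\theta=\kappa\Lambda_0-(p-\mathbf{h}^\vee+1)(q\delta-\theta),\]
as claimed. The only mildly delicate point is the uniform treatment of short roots in the non-simply-laced case; however, the factor $r^\vee\geq 2$ together with $p\geq\mathbf{h}^\vee$ gives enough slack that the short-root contribution is strictly dominated, so this reduces to a routine monotonicity estimate.
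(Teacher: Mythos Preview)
Your direct computation is a genuinely different route from the paper's two-line proof. The paper simply observes that $D_p$ and $M_{p,D_p}$ depend only on $p$ (not on $q$), so one may set $q=1$; then $\kappa=p-\mathbf{h}^\vee\in\mathbb{Z}_{\geq 0}$ and the statement reduces to the classical integrable case, where the maximal ideal is generated by $e_\theta(-1)^{\kappa+1}\mathbf{1}$. Your approach instead determines $D_p$ from scratch via the parametrization of $M_{p,D}$, which is more self-contained and shows the machinery of \Cref{thmNk} at work without appealing to external results.

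There is, however, one error in your short-root estimate. The bound $(\rho|\alpha^\vee)\geq -(\mathbf{h}^\vee-1)$ fails for short roots: the minimum of $(\rho|\alpha^\vee)$ over all roots is $-(\mathbf{h}-1)$, attained at $\alpha=-\theta_s$ (negative of the highest short root), since $\theta_s^\vee$ is the highest coroot and $(\rho|\theta_s^\vee)=\mathbf{h}-1$. With the corrected bound your short-root function becomes $s\mapsto r^\vee ps^2-s(\mathbf{h}-1)$; monotonicity on $[1,\infty)$ still holds because $2r^\vee p\geq 2r^\vee\mathbf{h}^\vee>\mathbf{h}>\mathbf{h}-1$, and at $s=1$ you now need $r^\vee p-(\mathbf{h}-1)>p-(\mathbf{h}^\vee-1)$, i.e.\ $(r^\vee-1)p>\mathbf{h}-\mathbf{h}^\vee$. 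Since $p\geq\mathbf{h}^\vee$ this follows from $r^\vee\mathbf{h}^\vee>\mathbf{h}$, which one checks type by type ($B_n$: $4n-2>2n$; $C_n$: $2n+2>2n$; $F_4$: $18>12$; $G_2$: $12>6$). So the fix is routine and your conclusion stands.
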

	
	\begin{proof}
		By \Cref{thmNk}, $D_p$ and $M_{p,D_p}$ are independent of $q$, so we may assume that $q=1$, that is, $\kappa$ is a non-negative integer. Then the result follows from the integrable case.
	\end{proof}
	
	\subsection{When $\kg$ is simply-laced}
	
	From now on, we assume that $\kg$ is of type $\mathsf{A}$, $\mathsf{D}$, or $\mathsf{E}$. Then $r^\vee=1$ and $\|\alpha\|^2=2$ for all roots of $\kg$. 
	It follows from $\rho(\alpha^\vee)=\rht(\alpha)$ that
	$$\Delta_{r,s}^p=\{\alpha\in \Delta:\rht(\alpha)=r-ps\}.$$
	And thus
	\begin{align*}
	M_{p,D}=&\sum_{(r,s\in\mathbb{Z}_{\geq 1}:rs=D)}\ \ \sum_{\alpha\in\Delta_{r,s}^p}E^\rho(-r\alpha)\\
	=&\sum_{r,s\in\mathbb{Z}_{\geq 1}, rs=D} \ \ \sum_{\alpha\in\Delta,\rht(\alpha)=r-ps}E^\rho(-r\alpha).
\end{align*}
	Taking the substitution $a=r$ and $b=ps$, we have
	\begin{align}\label{MkD}
		M_{p,D}=\sum_{ab=pD,p|b} E_{a,b},
	\end{align}
	where 
	\begin{align}\label{Eab}
	E_{a,b}:=\sum\limits_{\alpha\in\Delta, \rht(\alpha)=a-b} E^\rho(-a\alpha).
\end{align}	
	\begin{lemma}\label{EabEba}
		$E_{a,b}=-E_{b,a}$ for all $a,b\in\mathbb{Z}_{\geq 1}$.
	\end{lemma}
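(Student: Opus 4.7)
The plan is to derive the identity by a single application of \Cref{ReflectionRelation} together with the $\alpha \leftrightarrow -\alpha$ symmetry of the root system, exploiting that in the simply-laced setting $\alpha^\vee = \alpha$ and hence $(\rho|\alpha^\vee) = (\rho|\alpha) = \operatorname{ht}(\alpha)$.

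First I would fix $\alpha \in \Delta$ with $\operatorname{ht}(\alpha) = a-b$ and apply \Cref{ReflectionRelation}, which gives
$$E^\rho(-a\alpha) = -E^\rho\bigl((a - (\rho|\alpha^\vee))\alpha\bigr) = -E^\rho\bigl((a - (a-b))\alpha\bigr) = -E^\rho(b\alpha),$$
where the middle equality uses the simply-laced identification $(\rho|\alpha^\vee) = \operatorname{ht}(\alpha) = a-b$. Summing over all $\alpha$ of height $a-b$ and then reindexing by $\beta = -\alpha$ (so $\operatorname{ht}(\beta) = b-a$) converts the right-hand side into $-\sum_{\operatorname{ht}(\beta)=b-a} E^\rho(-b\beta) = -E_{b,a}$, which is exactly the claim.

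The argument is essentially a one-line computation, so there is no real obstacle; the only point worth being careful about is that the substitution $\beta = -\alpha$ is a bijection on the relevant subsets of $\Delta$, which is immediate from $\Delta = -\Delta$. I do not expect to need any additional structural input beyond \Cref{ReflectionRelation} and the simply-laced hypothesis $\|\alpha\|^2 = 2$ that was invoked just above to rewrite $\Delta_{r,s}^p$ in terms of heights.
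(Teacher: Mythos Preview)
Your proposal is correct and is essentially identical to the paper's own proof: both apply \Cref{ReflectionRelation} together with the simply-laced identity $(\rho|\alpha^\vee)=\operatorname{ht}(\alpha)$ to get $E^\rho(-a\alpha)=-E^\rho(b\alpha)$, then sum over $\alpha$ of height $a-b$ and reindex via $\beta=-\alpha$.
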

	\begin{proof}
		Since $\alpha^\vee=\alpha$ for each root $\alpha\in \Delta$, we have
		$E^\rho(-a\alpha)=-E^\rho((a-\rht(\alpha))\alpha)$ by \Cref{ReflectionRelation}.
		Then
		\begin{align*}
			E_{a,b}
			=&\sum_{\alpha\in\Delta,\rht(\alpha)=a-b} E^\rho(-a\alpha)
			=-\sum_{\alpha\in\Delta,\rht(\alpha)=a-b} E^\rho((a-\rht(\alpha))\alpha)\\
			=&-\sum_{\alpha\in\Delta,\rht(\alpha)=a-b} E^\rho(b\alpha)
			=-\sum_{-\beta\in\Delta,\rht(-\beta)=a-b} E^\rho(b(-\beta))\\
			=&-\sum_{\beta\in\Delta,\rht(\beta)=b-a} E^\rho(-b\beta)=-E_{b,a}.\qedhere
		\end{align*}
	\end{proof}	
	Let $\mathbf{P}$ be a subset of $\mathbb{Z}_{\geq 1}^2$. 
	We say $\mathbf{P}$ is symmetric if $(a,b)\in \mathbf{P}$ implies $(b,a)\in \mathbf{P}$. The ``transpose'' of $\mathbf{P}$ is denoted by $\mathbf{P}'=\{(a,b):(b,a)\in \mathbf{P}\}$.
	Define the subset $A(\mathbf{P}):=\mathbf{P}\setminus\mathbf{P}'$. 
	It is clear that a set $\mathbf{P}$ is symmetric if and only if $A(\mathbf{P})=\varnothing$.
	
	For $\mathbf{P}\subset \mathbb{Z}_{\geq 1}^2$, define the formal character 
	$$E_{\mathbf{P}}:=\sum_{(a,b)\in\mathbf{P}}E_{a,b}.$$
	Then we have
	\begin{align}\label{E_A(P)}
		E_{\mathbf{P}}=E_{A(\mathbf{P})}.
	\end{align}
	Indeed, it follows from \Cref{EabEba} that
	\begin{align*}
		2E_{\mathbf{P}}=&E_{\mathbf{P}}-E_{{\mathbf{P}'}}
		=(E_{\mathbf{P}\setminus{\mathbf{P}'}}+E_{\mathbf{P}\cap{\mathbf{P}'}})-(E_{{\mathbf{P}'}\setminus \mathbf{P}}+E_{\mathbf{P}\cap{\mathbf{P}'}})\\
		=&E_{\mathbf{P}\setminus{\mathbf{P}'}}-E_{{\mathbf{P}'}\setminus \mathbf{P}}
		=2E_{\mathbf{P}\setminus{\mathbf{P}'}}=2E_{A(\mathbf{P})}.
	\end{align*}
	
	Set $$\mathbf{P}_{p,D}:=\{(a,b)\in\mathbb{Z}_{\geq 1}^2:ab=pD,p|b\}.$$ Then we can write
	\begin{align}\label{ee1}
		M_{p,D}=E_{\mathbf{P}_{p,D}}.
	\end{align}

	\section{Type $\mathsf{A}$}
	
	This section is dedicated to the proof of \Cref{JS25b}.
	Let $\kg$ be the Lie algebra of type $\mathsf{A}_{n-1}$, that is, $\kg=\ksl_n$.
	Recall from  \cite{Humphreys} (see also \cite{tauvel2005lie}) that the simple roots of $\ksl_n$ can be realized by taking $\alpha_i=\epsilon_i-\epsilon_{i+1}$, where $\epsilon_1,\dots,\epsilon_n$ are orthonormal.
	The root system is the set of all vectors $\epsilon_i-\epsilon_j$ with $1\leq i\neq j\leq n$.  In addition, we have $\rht(\epsilon_i-\epsilon_j)=j-i$, and the weight 
	\begin{equation}\label{rho_v}
		\rho=\dfrac{1}{2}[(n-1)\epsilon_1+(n-3)\epsilon_2+\cdots-(n-3)\epsilon_{n-1}-(n-1)\epsilon_n].
	\end{equation}
	The Weyl group $W$ is the symmetric group $\mathfrak{S}_n$ acting on $\epsilon_1, \epsilon_2,\dots,\epsilon_n$. 
	
	In the discussion of type $\mathsf{A}$, we shall write $a_1\epsilon_1+a_2\epsilon_2+\cdots+a_n\epsilon_n$ as an $n$-tuple $[a_1,a_2,\dots,a_n]$ for brevity. Then the $n$-tuple $[a_1,a_2,\dots,a_n]$ is in $\kh^*$ if and only if $a_1+a_2+\cdots+a_n=0$. 
	To be specific, the corresponding weight of $[a_1,a_2,\dots,a_n]$ is 
	\begin{align}\label{v2alpha}
		a_1\alpha_1+(a_1+a_2)\alpha_2+(a_1+a_2+a_3)\alpha_3+\cdots+(a_1+a_2+\cdots+a_{n-1})\alpha_{n-1}.
	\end{align}
	An $n$-tuple $[a_1,a_2,\dots,a_n]$ is said to be decreasing if $a_1\geq a_2\geq \cdots\geq a_n$.  We have the following lemma.

	\begin{lemma}\label{A:dominant_in_orbit}
		The $W$-orbit of weight $[a_1,a_2,\dots,a_n]$ is the set of $n$-tuples with all permutations of $a_1,a_2,\dots,a_n$. Moreover, the unique highest weight in the orbit is the decreasing one among all the permutations. 
	\end{lemma}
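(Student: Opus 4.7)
The plan is to handle the two assertions separately. For the orbit description, identifying $W$ with $\mathfrak{S}_n$ acting by permutation on $\{\epsilon_1,\dots,\epsilon_n\}$, the image of $\lambda=\sum_i a_i\epsilon_i$ under $\sigma\in W$ is $\sum_i a_i\epsilon_{\sigma(i)}=\sum_j a_{\sigma^{-1}(j)}\epsilon_j$; letting $\sigma$ range over $\mathfrak{S}_n$ produces exactly the $n$-tuples whose entries are all permutations of $(a_1,\dots,a_n)$.

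For the highest-weight claim, I would translate the dominance order into a combinatorial condition via formula \eqref{v2alpha}. Since any two tuples in the orbit share the same total sum, $[a_1,\dots,a_n]-[b_1,\dots,b_n]\in Q^+$ is equivalent to $\sum_{i=1}^k a_i\geq \sum_{i=1}^k b_i$ for every $k=1,\dots,n-1$. The second assertion therefore amounts to a purely combinatorial statement: among all permutations of a fixed multiset, the decreasing arrangement is the unique one maximizing every initial partial sum.

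This combinatorial fact I would prove by an adjacent-exchange argument. If $[a_1,\dots,a_n]$ is not already decreasing, pick the least index $k$ with $a_k<a_{k+1}$; swapping these two entries strictly increases the $k$-th partial sum while leaving all the others unchanged, so the result strictly dominates the original in the weight order. Iterating produces the (visibly unique) decreasing rearrangement, which is therefore the unique maximum in the orbit. No serious obstacle is anticipated; the only mild point to monitor is that when some entries of $(a_1,\dots,a_n)$ coincide the decreasing rearrangement is still unambiguously defined as an ordered sequence, which preserves the uniqueness conclusion.
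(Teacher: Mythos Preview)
Your proposal is correct, but it follows a different path from the paper's proof.

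The paper's argument is a one-liner from standard Lie theory: assuming the tuple is already decreasing, one checks $(\alpha_i\mid[a_1,\dots,a_n])=a_i-a_{i+1}\geq 0$ for every simple root $\alpha_i$, so the decreasing tuple is dominant, and then invokes the general fact that the dominant weight in a $W$-orbit is its unique highest element. Your approach instead unpacks the dominance order combinatorially via \eqref{v2alpha}, reducing the claim to the statement that the decreasing rearrangement simultaneously maximizes all initial partial sums, and then establishes that by an adjacent-transposition (bubble-sort) argument. The paper's route is shorter because it outsources the work to the theory of Weyl chambers; yours is more self-contained and makes the partial order on the orbit completely explicit, at the cost of a few extra lines. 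Both are perfectly valid for this elementary lemma.
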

	\begin{proof}
		Without loss of generality, we may assume that $[a_1,a_2,\dots,a_n]$ is decreasing. 
		Then for any simple root $\alpha_i$, we have
		\begin{align*}
			(\alpha_i|[a_1,a_2,\dots,a_n])=a_i-a_{i+1}\geq 0.
		\end{align*}
		This implies that $[a_1,a_2,\dots,a_n]$ is highest in its orbit.
	\end{proof}
	
	\begin{lemma}\label{A:identical_elimination}
		Let $\lambda$ be a weight, with its $n$-tuple being $[a_1,a_2,\dots,a_n]$. Then the following conditions are equivalent:
		\begin{enumerate}
			\item $a_i=a_j$ for some  $i,j$ distinct;
			\item $E(\lambda)=E^\rho(\lambda-\rho)=0$.
		\end{enumerate}
	\end{lemma}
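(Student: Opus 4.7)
The plan is to apply Lemma \ref{weyl_transition}(3) to $\mu = \lambda - \rho$ and translate the condition of having a non-trivial stabilizer under the shifted action into an ordinary-stabilizer condition on $\lambda$, where $W=\mathfrak{S}_n$ acts by permuting the $\epsilon_i$-coordinates.

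First I would unwind the shifted action: for any $w \in W$,
\begin{equation*}
w \circ (\lambda - \rho) = w(\lambda - \rho + \rho) - \rho = w(\lambda) - \rho,
\end{equation*}
so $w \circ (\lambda - \rho) = \lambda - \rho$ if and only if $w(\lambda) = \lambda$. By Lemma \ref{weyl_transition}(3), this shows that $E(\lambda) = E^\rho(\lambda - \rho) = 0$ is equivalent to the existence of some non-identity $w \in W$ with $w(\lambda) = \lambda$, i.e.\ $\lambda$ has a non-trivial ordinary stabilizer in $W$.

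Next I would use the concrete description of $W$ as $\mathfrak{S}_n$ acting on $\{\epsilon_1,\dots,\epsilon_n\}$ by permutation. Under the $n$-tuple identification $\lambda = [a_1,\dots,a_n]$, a permutation $\sigma \in \mathfrak{S}_n$ satisfies $\sigma(\lambda) = \lambda$ precisely when $a_{\sigma(i)} = a_i$ for all $i$. Thus the stabilizer of $\lambda$ equals the Young subgroup $\prod_c \mathfrak{S}_{I_c}$ indexed by the level sets $I_c = \{i : a_i = c\}$. This subgroup is non-trivial if and only if some $I_c$ has size at least $2$, i.e.\ if and only if $a_i = a_j$ for some distinct $i,j$.

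Combining the two steps gives the equivalence of (1) and (2). No serious obstacle arises here; the only subtlety is the bookkeeping translating shifted-action fixed points back to ordinary-action fixed points, which is the one-line calculation above. I would present this as a short two-paragraph proof.
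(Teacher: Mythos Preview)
Your proof is correct and follows essentially the same approach as the paper's: both invoke Lemma~\ref{weyl_transition}(3), translate the shifted-action stabilizer of $\lambda-\rho$ into the ordinary stabilizer of $\lambda$ under $W=\mathfrak{S}_n$, and then observe that a permutation fixes $[a_1,\dots,a_n]$ nontrivially precisely when two coordinates agree. The paper's version is terser (it just exhibits a transposition when $a_i=a_j$ and notes that distinct coordinates force $w=1$), whereas you spell out the Young-subgroup structure of the stabilizer, but the argument is the same.
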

	\begin{proof}
		By \Cref{weyl_transition}, the second condition holds if and only if there exists some $w\in W\setminus\{1\}$ such that $w(\lambda)=\lambda$. 
		If $a_i=a_j$  for some  $i,j$ distinct, take $w$ to be the transposition of $i$ and $j$ in $\mathfrak{S}_n$. Otherwise, all $a_i$'s are distinct. Then $w(\lambda)=\lambda$ holds only when $w=1$. 
	\end{proof}
	
	\subsection{$M_{p,D}$ for type $\mathsf{A}$}
	
	Assume that  $a\in\mathbb{Z}_{\geq 1}$ and $\alpha=-(\epsilon_i-\epsilon_j)\in\Delta^-$. 
	For brevity, we shall denote $[a_1-a,a_2-a,\dots,a_n-a]$ by   $[a_1,a_2,\dots,a_n]-a$. Then the weight $\rho$ can be written as $[n-1,n-2,\dots,0]-\frac{n-1}{2}$.
	The weight $\rho-a\alpha$ is
	$$[n-1,\dots,n-(i-1),n-i+a,n-(i+1),\dots,n-(j-1),n-j-a,n-(j+1),\dots,0]-\dfrac{n-1}{2}.$$
	It follows from \Cref{A:identical_elimination} that
	\begin{align}\label{A:E_nonzero_cond}
		E^\rho(-a\alpha)\neq 0\iff i\leq a,~ j>n-a,
	\end{align}
	in this case, by \Cref{A:dominant_in_orbit}, the corresponding dominant weight in the $W$-orbit of $\rho-a\alpha$, i.e., $\{w(\rho-a\alpha):w\in W\}$, is 
	\begin{align}\label{A:dominant_weight}
		[n-i+a,n-1,n-2,\dots,\widehat{n-i},\dots,\widehat{n-j},\dots,0,n-j-a]-\dfrac{n-1}{2}
	\end{align}
	Notice that there are no two non-zero $E^\rho(-a\alpha)$'s with $a\in\mathbb{Z}_{\geq 1}$ and $\alpha\in\Delta^-$ satisfying that their dominant weights \eqref{A:dominant_weight} are the same. 
	Thus we have the following lemma.
	\begin{lemma}\label{A:uniqueness of dominant weight}
		All non-zero $E^\rho(-a\alpha)$'s with $a\in\mathbb{Z}_{\geq 1}$ and $\alpha\in\Delta^-$ are linearly independent. 
	\end{lemma}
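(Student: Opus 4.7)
The strategy is to show that each nonzero $E^\rho(-a\alpha)$ coincides, up to a sign, with $E^\rho(\mu_{a,\alpha})$ for a uniquely determined dominant weight $\mu_{a,\alpha}$, and that the assignment $(a,\alpha)\mapsto \mu_{a,\alpha}$ is injective on the set of pairs yielding a nonzero value. Linear independence then follows from the standard fact that, as $\mu$ ranges over dominant weights, the expressions $E^\rho(\mu)=R\operatorname{ch}\overline{L}(\mu)$ are linearly independent, since characters of irreducibles with distinct highest weights are linearly independent.

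The first step is immediate from \Cref{weyl_transition}(1) combined with \Cref{A:dominant_in_orbit}: whenever $E^\rho(-a\alpha)\neq 0$, the shifted Weyl orbit of $-a\alpha$ contains a unique dominant representative, obtained by sorting the $n$-tuple of $\rho-a\alpha$ in decreasing order. This representative is exactly the weight $\mu_{a,\alpha}$ displayed in \eqref{A:dominant_weight}, and hence $E^\rho(-a\alpha)=\pm E^\rho(\mu_{a,\alpha})$.

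The bulk of the argument is the injectivity of $(a,\alpha)\mapsto \mu_{a,\alpha}$. Writing $\alpha=-(\epsilon_i-\epsilon_j)$ with $i<j$ and using the non-vanishing condition \eqref{A:E_nonzero_cond} (i.e.\ $i\leq a$ and $j>n-a$), the $n$-tuple of $\mu_{a,\alpha}+\tfrac{n-1}{2}$ has largest entry $n-i+a$, which strictly exceeds $n-1$; smallest entry $n-j-a$, which is strictly negative; and middle $n-2$ entries given by the decreasing enumeration of $\{0,1,\dots,n-1\}\setminus\{n-i,n-j\}$. Thus from the dominant tuple one reads off the unordered pair $\{n-i,n-j\}$ as the two integers in $[0,n-1]$ absent from the middle block, which (since $i<j$) recovers $(i,j)$, and then $a=(\text{largest entry})-(n-i)$. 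Hence distinct admissible triples $(a,i,j)$ yield distinct $\mu_{a,\alpha}$'s, and combined with the first step the lemma follows. The argument is essentially a bookkeeping verification of an explicit parametrization, so no substantive obstacle is expected.
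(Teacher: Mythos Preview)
Your proposal is correct and follows essentially the same approach as the paper: both arguments observe that the dominant representative \eqref{A:dominant_weight} in the shifted orbit of $-a\alpha$ determines $(a,i,j)$ uniquely, whence the nonzero $E^\rho(-a\alpha)$'s are, up to sign, the $E^\rho(\mu)$ for pairwise distinct dominant $\mu$ and hence linearly independent. The paper simply asserts the injectivity in one sentence preceding the lemma, whereas you spell out explicitly how to read off $\{n-i,n-j\}$ from the missing entries in the middle block and then recover $a$ from the top entry; this added detail is helpful but does not constitute a different method.
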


	For $a,b\in\mathbb{Z}_{\geq 1}$, define the following sets of $(a,b)$:
	\begin{align*}
		\mathbf{S}_{-}:=\{(a,b):a<b,b-a<n<a+b\}, \ \ \mathbf{S}_{+}:=\mathbf{S}_{-}', \ \  \mathbf{S}:=\mathbf{S}_{+}\cup \mathbf{S}_{-},
	\end{align*}
where $\mathbf{S}_{-}'$ is the transpose of $\mathbf{S}_{-}$.
	Notice that $\mathbf{S}_{+}\cap \mathbf{S}_{-}=\varnothing$, and 
	$$\mathbf{S}=\{(a,b):0<|a-b|<n<a+b\}.$$

	\begin{lemma}\label{A:nonzero Eab}
		$E_{a,b}\neq 0$ if and only if $(a,b)\in \mathbf{S}$.
	\end{lemma}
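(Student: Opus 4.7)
The plan is to reduce the claim to a direct application of the two immediately preceding lemmas, combined with a short interval computation. Since the set $\mathbf{S}$ is symmetric in its two coordinates and $E_{a,b}=-E_{b,a}$ by \Cref{EabEba}, it suffices to handle the case $a<b$; the case $a=b$ is handled at once, because no root has height zero, so $E_{a,a}=0$, and simultaneously $(a,a)\notin\mathbf{S}$. The case $a>b$ then follows from the $a<b$ case by the antisymmetry.

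Under the assumption $a<b$, the roots with $\rht(\alpha)=a-b<0$ are exactly the negative roots $\alpha=-(\epsilon_i-\epsilon_j)$ with $1\leq i<j\leq n$ and $j-i=b-a$. Thus in $E_{a,b}$ the index $i$ runs over $\{1,2,\dots,n-(b-a)\}$ with $j=i+(b-a)$, provided $b-a<n$ (otherwise the index set is empty and $E_{a,b}=0$; note that in that case $(a,b)\notin\mathbf{S}$ either). By \Cref{A:uniqueness of dominant weight} the nonzero summands $E^\rho(-a\alpha)$ are linearly independent, so $E_{a,b}\neq 0$ is equivalent to the existence of at least one index $i$ in the above range for which $E^\rho(-a\alpha)\neq 0$.

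By the nonvanishing criterion \eqref{A:E_nonzero_cond}, such an $i$ must additionally satisfy $i\leq a$ and $j>n-a$; substituting $j=i+b-a$ the latter becomes $i>n-b$. Hence existence of a valid $i$ is equivalent to the interval
$$\bigl[\max(1,n-b+1),\ \min(a,n-b+a)\bigr]$$
containing an integer. A short case analysis (splitting on whether $b<n$ or $b\geq n$, and using $b-a<n$ in the second case) shows this interval is nonempty precisely when $a+b>n$. Together with $b-a<n$, this is exactly the condition $(a,b)\in\mathbf{S}_-$. Combining with the reduction in the first paragraph yields $E_{a,b}\neq 0 \iff (a,b)\in\mathbf{S}_-\cup\mathbf{S}_+=\mathbf{S}$.

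The only mildly delicate point is the last interval calculation, where one has to check that the two defining inequalities of $\mathbf{S}_-$ really capture both the ``roots exist'' constraint $b-a<n$ and the ``nonvanishing $E^\rho$ term exists'' constraint $a+b>n$. Conceptually, however, the whole argument is a direct combinatorial unpacking of \Cref{A:E_nonzero_cond} and \Cref{A:uniqueness of dominant weight}, with no additional representation-theoretic input needed.
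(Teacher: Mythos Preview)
Your proof is correct and follows essentially the same approach as the paper: reduce to $a<b$ via \Cref{EabEba}, then use the nonvanishing criterion \eqref{A:E_nonzero_cond} together with the linear independence of \Cref{A:uniqueness of dominant weight} to reduce the question to whether some admissible index $i$ exists. The only cosmetic difference is that the paper, for the backward implication, simply exhibits one explicit root $\alpha$ (namely $-(\epsilon_{n+a-b}-\epsilon_n)$ when $b\geq n$ and $-(\epsilon_a-\epsilon_b)$ when $b<n$), whereas you characterize the full interval of valid $i$'s and check it is nonempty; your version is slightly more systematic and makes the appeal to \Cref{A:uniqueness of dominant weight} explicit, which the paper leaves implicit.
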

	\begin{proof}
		By \Cref{EabEba}, we may assume that $a<b$. 
		If $E_{a,b}\neq 0$ and $a<b$, there exists some non-zero  $E^\rho(-a\alpha)$ occurring in the sum $E_{a,b}$.  Write $\alpha=-(\epsilon_i-\epsilon_j)\in\Delta^-$. It follows from \eqref{A:E_nonzero_cond} that $|a-b|=|\rht(\alpha)|<n$ and 
		$a+b=a+a-(a-b)>(n-j)+i-\rht(\alpha)=n$. 
		So $(a,b)\in \mathbf{S}$.
		
		If $(a,b)\in \mathbf{S}\cap \{(a,b):a<b\}=\mathbf{S}_-$, then the term $E^\rho(-a\alpha)$ with 
		$$\alpha=
		\begin{cases}
			-(\epsilon_{n+a-b}-\epsilon_n),&b\geq n,\\
			-(\epsilon_a-\epsilon_b),&b<n
		\end{cases}$$
		is non-zero and appears in $E_{a,b}$ by \eqref{A:E_nonzero_cond}.
	\end{proof}

	Combining \Cref{EabEba}, \Cref{A:uniqueness of dominant weight} and \Cref{A:nonzero Eab}, we deduce that the sum $E_{\mathbf{P}}=0$ 
	if and only if $\mathbf{P}\cap \mathbf{S}$ is symmetric. Then it follows from \eqref{E_A(P)}  that $$E_{\mathbf{P}}=E_{A(\mathbf{P})}=E_{A(\mathbf{P})\cap\mathbf{S}},$$ and  the sum $E_{\mathbf{P}}=0$
	if and only if $A(\mathbf{P})\cap \mathbf{S}=\varnothing$. 
	Define $${\mathbf{Q}}_{p,D}:=A(\mathbf{P}_{p,D})=\{(a,b):ab=pD,p|b,p\nmid a\}.$$
	Then the following lemma follows from \eqref{ee1}. 
	\begin{lemma}\label{A:criterion4M}
		We have  $M_{p,D}=E_{\mathbf{Q}_{p,D}\cap\mathbf{S}}$. The sum $M_{p,D}=0$
		if and only if $\mathbf{Q}_{p,D}\cap \mathbf{S}=\varnothing$. 
	\end{lemma}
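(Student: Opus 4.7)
The plan is to apply the general framework established in the paragraph immediately preceding the lemma, specialized to the set $\mathbf{P} = \mathbf{P}_{p,D}$. The only new ingredient is an elementary identification of $A(\mathbf{P}_{p,D})$ with the explicitly defined set $\mathbf{Q}_{p,D}$; the rest is bookkeeping.

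First I would unwind the definitions to compute $A(\mathbf{P}_{p,D})$. Since $\mathbf{P}_{p,D} = \{(a,b) : ab = pD,\ p \mid b\}$, its transpose is
$$\mathbf{P}_{p,D}' = \{(a,b) : (b,a) \in \mathbf{P}_{p,D}\} = \{(a,b) : ab = pD,\ p \mid a\}.$$
Subtracting gives
$$A(\mathbf{P}_{p,D}) = \mathbf{P}_{p,D} \setminus \mathbf{P}_{p,D}' = \{(a,b) : ab=pD,\ p \mid b,\ p \nmid a\} = \mathbf{Q}_{p,D}.$$
Combining \eqref{ee1} with \eqref{E_A(P)} then gives $M_{p,D} = E_{\mathbf{P}_{p,D}} = E_{A(\mathbf{P}_{p,D})} = E_{\mathbf{Q}_{p,D}}$, and discarding the vanishing terms $E_{a,b}$ for $(a,b) \notin \mathbf{S}$ via \Cref{A:nonzero Eab} yields the first assertion $M_{p,D} = E_{\mathbf{Q}_{p,D} \cap \mathbf{S}}$.

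For the equivalence, I would directly invoke the statement proved just above the lemma, namely that $E_{\mathbf{P}} = 0$ if and only if $A(\mathbf{P}) \cap \mathbf{S} = \varnothing$. Applied with $\mathbf{P} = \mathbf{P}_{p,D}$, this reads $M_{p,D} = 0 \iff \mathbf{Q}_{p,D} \cap \mathbf{S} = \varnothing$, which is exactly the second assertion.

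I do not expect a genuine obstacle: the lemma is essentially a corollary of the preceding discussion, and the entire proof reduces to the set-theoretic identity $A(\mathbf{P}_{p,D}) = \mathbf{Q}_{p,D}$. The heavier work—the linear independence of the non-zero $E^\rho(-a\alpha)$ (\Cref{A:uniqueness of dominant weight}), the characterization $E_{a,b} \neq 0 \iff (a,b) \in \mathbf{S}$ (\Cref{A:nonzero Eab}), and the antisymmetry $E_{a,b} = -E_{b,a}$ (\Cref{EabEba})—has already been carried out earlier in the section.
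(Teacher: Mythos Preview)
Your proposal is correct and follows essentially the same approach as the paper: the lemma is indeed just the specialization of the general identity $E_{\mathbf{P}}=E_{A(\mathbf{P})\cap\mathbf{S}}$ and the criterion $E_{\mathbf{P}}=0\iff A(\mathbf{P})\cap\mathbf{S}=\varnothing$ (both established in the paragraph preceding the lemma) to $\mathbf{P}=\mathbf{P}_{p,D}$, together with the elementary computation $A(\mathbf{P}_{p,D})=\mathbf{Q}_{p,D}$. The paper presents this exactly as you do, with the set-theoretic identification of $\mathbf{Q}_{p,D}$ stated just before the lemma.
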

	
	\subsection{Values of $D_p$ and corresponding weights}
	
	For simplicity, we shall simply use the notation $\mathbf{Q}$ instead of $\mathbf{Q}_{p,D}$ if there is no ambiguity. 
	
	\begin{lemma}\label{A:Dp} 
		For $n,p\in\mathbb{Z}_{\geq 2}$ with $p<n$, we have the following results on $D_p$:
		\begin{enumerate}[wide] 
			\item When $p=2$, $D_p\geq n/2$ if $2|n$, while $D_p\geq n$ if $2\nmid n$.
			\item When $n=5$ and $p=3$, $D_p\geq 4$;
			\item When $n=8$ and $p=3$, $D_p\geq 6$;
			\item When $n=7$ and $p=4$, $D_p\geq 4$;
			\item For other cases, set $s_1=\lfloor n/p\rfloor, s_2=\lceil n/p\rceil$ and define a function $D(s)=(|sp-n|+1)s$. Then $D_p\geq\min\{D(s_1),D(s_2)\}$.  
		\end{enumerate}
	\end{lemma}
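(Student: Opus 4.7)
The plan is to apply \Cref{A:criterion4M}: $D_p$ equals the smallest integer $D \geq 1$ for which $\mathbf{Q}_{p,D} \cap \mathbf{S}$ is non-empty. I parametrize an arbitrary element as $(a,b) = (a, ps)$ with $s \in \mathbb{Z}_{\geq 1}$ and $p \nmid a$, so that $D = as$. The defining conditions of $\mathbf{S}$, namely $0 < |a-b| < n$ and $a+b > n$, combine to the single inequality $a > |ps - n|$, i.e.\ $a \geq |ps-n|+1$ (the separation condition $a \neq ps$ being automatic from $p \nmid a$). Hence
$$D = as \geq (|ps - n| + 1)\,s =: D(s),$$
with the inequality strict whenever $p \mid (|ps - n| + 1)$, in which case $a$ must be bumped up to the next integer coprime to $p$. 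Thus the problem reduces to minimizing $D(s)$ over $s \geq 1$, with a divisibility correction at the minimizing $s$.

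For the generic bound in case (5), I analyze the piecewise quadratic $D(s)$. For $s \geq s_2$, one has $D(s) = (sp - n + 1)s$, a convex parabola whose vertex $s = (n-1)/(2p)$ lies strictly to the left of $s_2 = \lceil n/p \rceil$; hence $D$ is increasing on $[s_2, \infty)$ and is bounded below by $D(s_2)$. For $1 \leq s \leq s_1 = \lfloor n/p \rfloor$, $D(s) = (n-sp+1)s$ is concave, so its minimum on $[1, s_1]$ is attained at an endpoint, and a direct computation yields
$$D(s_1) - D(1) = (s_1 - 1)\bigl(n + 1 - (s_1 + 1)p\bigr).$$
Using $(s_1+1)p \geq n+1$, which follows either from $s_2 = s_1 + 1$ and $s_2 p \geq n+1$ when $p \nmid n$, or from $(s_1+1)p = n + p \geq n+2$ when $p \mid n$, one obtains $D(s_1) \leq D(1)$. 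The two halves combine to give $\min_{s \geq 1} D(s) = \min\{D(s_1), D(s_2)\}$, which yields the claim in the generic case once no divisibility obstruction is present at the chosen $s$.

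The special cases (1)--(4) arise exactly when the minimizing $a = |ps - n|+1$ at $s \in \{s_1, s_2\}$ is divisible by $p$, which happens iff $p \mid (n+1)$ for $s = s_1$ and iff $p \mid (n-1)$ for $s = s_2$; in those situations the forced replacement $a \mapsto a+1$ increases the bound by $s$. The strategy is to identify which pairs $(n,p)$ with $p < n$ suffer this obstruction at both $s_1$ and $s_2$ simultaneously (or at the unique value $s_1 = s_2$ when $p \mid n$) and produce a strictly larger true bound. For $p = 2$ the divisibility $p \mid n$ or $p \mid n \pm 1$ is automatic, yielding case (1); a short finite search over small $n$ identifies $(n,p) \in \{(5,3),(7,4),(8,3)\}$ as the only remaining triples where the inflated bound beats $\min\{D(s_1),D(s_2)\}$, and in each of these I verify directly that no other admissible $s$ (typically $s = 1$ or $s = s_1 \pm 1$) yields a smaller $as$.

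The main obstacle is precisely this finite bookkeeping in the exceptional families: once the smooth monotonicity argument from the generic analysis is broken by divisibility, one must carefully rule out every nearby value of $s$ producing a smaller $D$. The generic inequality $D(s) \geq \min\{D(s_1),D(s_2)\}$ no longer suffices because the minimum at $s_i$ is unattainable, and one must compare the inflated contribution $(|ps_i - n|+2)s_i$ from $s_i$ against $D(s)$ for several neighboring $s$; the sporadic nature of cases (2)--(4) reflects the few places where these comparisons actually fail.
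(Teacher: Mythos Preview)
Your plan matches the paper's approach: both reduce via \Cref{A:criterion4M} to minimizing $as$ subject to $a\geq|ps-n|+1$ and $p\nmid a$; your inline convexity/concavity analysis of $D(s)$ is precisely the content of the paper's appendix \Cref{Appen1}, and the exceptional cases are handled in both by direct enumeration.

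One small correction to your framing of the special cases: you claim (2)--(4) are exactly the triples where the divisibility obstruction forces the true bound strictly above $\min\{D(s_1),D(s_2)\}$, but this is only true for $(n,p)=(5,3)$ (and for the odd-$n$ half of case (1)). For $(n,p)=(7,4)$ one has $D(s_1)=D(s_2)=4$, and although $a=|ps_1-n|+1=4$ is obstructed, $a=|ps_2-n|+1=2$ is not, so the minimum $4$ is still attained; likewise for $(n,p)=(8,3)$ one has $D(s_1)=D(s_2)=6$ with $s_2=3$ unobstructed ($a=2$). The paper lists (3) and (4) separately only because \Cref{Appen1} formally excludes $(p,N)=(3,8)$ and because Theorem~1 treats the corresponding singular vectors differently, not because the lemma's inequality is sharper there. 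Your direct verification for each case would of course produce the correct bounds regardless, so this does not affect the validity of the argument.
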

	\begin{proof}
		By \Cref{A:criterion4M}, it suffices to show that  $\mathbf{Q}_{p,D}\cap\mathbf{S}=\varnothing$ for $D$ less than the given bound. 
		\begin{enumerate}[wide]
			\item Assume that $2|n$ and $D<n/2$. 
			Take $(a,b)\in \mathbf{Q}\cap\mathbf{S}$. 
			We have $a=pD/b\leq D<n/2$ since $b\geq p=2$. 
			If $a=1$, then $b=pD/a=2D$ and thus $a+b=2D+1\leq n-1$, which contradicts $(a,b)\in \mathbf{S}$. 
			Also $a\neq 2$ since $p\nmid a$.
			In summary, we have $3\leq a<n/2$, and hence 
			$$ab\geq a(n+1-a)\geq 3(n-2)> n-2\geq pD,$$
			which contradicts $ab= pD$.
			So the set $\mathbf{Q}\cap\mathbf{S}$ is empty.
			
			Assume that $2\nmid n$ and $D<n$. 
			Take $(a,b)\in \mathbf{Q}\cap\mathbf{S}$. 
			Analogously, we have $a<n$. 
			If $a=1$, then $b=pD/a=2D$, which is absurd since  $b=n$ by $(a,b)\in \mathbf{S}$ while $n$ is odd.
			Also $a\neq n-1$ since $2\nmid a$.
			In summary, we have $3\leq a\leq n-2$, and hence $n\geq 5$. Then it follows that
			$$ab\geq a(n+1-a)\geq 3(n-2)> 2(n-1)\geq pD,$$
			which contradicts $ab= pD$.
			So the set $\mathbf{Q}\cap\mathbf{S}$ is empty.
			\item Assume that $D\leq 3$.  Take $(a,b)\in \mathbf{Q}\cap\mathbf{S}$. 
			Then $ab=pD\leq 9$, so $(a,b)\in\{(1,3),(2,3),(1,6),(1,9)\}$ by $(a,b)\in \mathbf{Q}$. However, none of these is in $\mathbf{S}$.
			\item Assume that $D\leq 5$.  Take $(a,b)\in \mathbf{Q}\cap\mathbf{S}$. 
			Then $ab=pD\leq 15$, so $(a,b)\in\{(1,3),(2,3),(4,3),(5,3),(1,6),(2,6),(1,9),(1,12),(1,15)\}$ by $(a,b)\in \mathbf{Q}$. However, none of these is in $\mathbf{S}$.
			\item Assume that $D\leq 3$.  Take $(a,b)\in \mathbf{Q}\cap\mathbf{S}$. 
			Then $ab=pD\leq 12$, so $(a,b)\in\{(1,4),(2,4),(3,4),(1,8),(1,12)\}$ by $(a,b)\in \mathbf{Q}$. However, none of these is in $\mathbf{S}$.
			\item 
			Take $(a,b)\in \mathbf{Q}\cap\mathbf{S}$. 
			Write $b=ps$, and then 
			$$pD=ab\geq (|b-n|+1)b=ps(|ps-n|+1).$$
			The result follows by applying \Cref{Appen1} to $D=F_{n,p,1}$.\qedhere
		\end{enumerate}
	\end{proof}

	\begin{proof}[Proof of \Cref{JS25b}]
		By \Cref{thmNk}, \Cref{A:criterion4M} and \Cref{A:Dp}, it suffices to show that $\mathbf{Q}_{p,D}\cap \mathbf{S}\neq \varnothing$ for the given value of $D$. 
		Then the weights of minimal singular vectors are given by those dominant $\lambda_i$'s in the formula \eqref{M_pD2E}.
		Recall that the dominant weight in $E^\rho(\cdot)$ can be calculated by \Cref{A:dominant_in_orbit}. 
		\begin{enumerate}[wide]
			\item This case follows from \Cref{integrable_case}.
			\item Assume that $2|n$ and $D=n/2$. We can obtain from the proof of \Cref{A:Dp} that $\mathbf{Q}\cap \mathbf{S}=\{(1,n)\}$. So 
			$$M_{p,D}=E_{1,n}=E^\rho(\epsilon_1-\epsilon_n).$$
			
			Assume that $2\nmid n$ and $D=n$. We can obtain from the proof of \Cref{A:Dp} that $\mathbf{Q}\cap \mathbf{S}=\{(n,2)\}$. So 
			$$M_{p,D}=E_{n,2}=E^\rho(-n(\epsilon_1-\epsilon_{n-1}))+E^\rho(-n(\epsilon_2-\epsilon_{n})).$$
			\item Set $D=4$. Then $\mathbf{Q}\cap \mathbf{S}=\{(4,3),(2,6)\}$, and thus 
			\begin{align*}
				M_{p,D}=&E_{4,3}+E_{2,6}=\sum_{i=1}^4 E^\rho(-4(\epsilon_i-\epsilon_{i+1}))+E^\rho(2(\epsilon_1-\epsilon_5))\\
				=&E^\rho(-4(\epsilon_2-\epsilon_3))+E^\rho(-4(\epsilon_3-\epsilon_4))+E^\rho(2(\epsilon_1-\epsilon_5)).
			\end{align*}
			\item Set $D=6$. Then $\mathbf{Q}\cap \mathbf{S}=\{(2,9)\}$, and thus 
			$M_{p,D}=E_{2,9}=E^\rho(2(\epsilon_1-\epsilon_8))$. 
			\item Set $D=4$. Then $\mathbf{Q}\cap \mathbf{S}=\{(2,8)\}$, and thus 
			$M_{p,D}=E_{2,8}=E^\rho(2(\epsilon_1-\epsilon_7))$.
			
			\item Set $D=\min\{D(s_1),D(s_2)\}$. 
			We claim that $\mathbf{Q}\cap \mathbf{S}=\{(|n-ps_i|+1,ps_i):D(s_i)=D\}$. 
			And then $M_{p,D}$ follows from \Cref{A:criterion4M}.
			
			Let $(a,b)\in\mathbf{Q}\cap\mathbf{S}$. Since $p\mid b$, write
			$b=ps$ with $s\in\mathbb{Z}_{\geq1}$. By \Cref{A:Dp} and \Cref{Appen1}, we can obtain that $D_p= \min\{D(s_1),D(s_2)\}=D(s)$. In particular,
			$s\in\{s_i:D(s_i)=D\}$, and hence $
			\mathbf{Q}\cap\mathbf{S}
			\subset
			\{(|n-ps_i|+1,ps_i):D(s_i)=D\}$.
			
			Set $(a,b)=(|n-ps_i|+1,ps_i)$ with $D(s_i)=D$.
			We have $pD=ab$, $p|b$ and $|n-b|<a$. Then it suffices to show that $a<p$, which implies that $a< b$ and $p\nmid a$, thus $(a,b)\in \mathbf{Q}\cap \mathbf{S}$.
			Here we use the notation  $n_0:=n-ps_1$.
			
			\begin{enumerate}
				\item If $n_0=0$, we have $s_1=s_2$, $D=D(s_1)=D(s_2)$ and $(a,b)=(1,n)$. It follows that
				$E_{a,b}=E^{\rho}(\epsilon_{1}-\epsilon_n)$.
				\item If $n_0\neq 0$, then $s_2=s_1+1$. 
				\begin{enumerate}
					\item Suppose that $D=D(s_1)\leq D(s_2)$ and 
					$(a,b)=(n_0+1,ps_1)$. 
					Then we have $n_0<p-1$. 
					Indeed, if $n_0=p-1$, then
					$$0\leq D(s_2)-D(s_1)=2(s_1+1)-s_1p=2-(p-2)s_1.$$
					The inequality holds if and only if $(s_1,p)=(1,3),(1,4)$ or $(2,3)$, that is, $(n,p)=(5,3),(7,4)$ or $(8,3)$, which are the cases discussed before.
					
					So $a=n_0+1<p$. It follows from \Cref{A:identical_elimination} that
					$E_{a,b}=E^{\rho}(a(\epsilon_a-\epsilon_{n-(a-1)}))$. 
					The root $\epsilon_a-\epsilon_{n-(a-1)}$ is positive since  $$n+1=ps_1+n_0+1=ps_1+a\geq p+a>2a.$$ 
					
					\item Suppose that $D=D(s_2)\leq D(s_1)$ and 
					$(a,b)=(p-n_0+1,ps_2)$.
					Then $n_0>1$. Indeed, if $n_0=1$, then 
					$$0\leq D(s_1)-D(s_2)=2s_1-p(s_1+1)=-(p-2)(s_1+1)-2\leq -2.$$
					
					So $a=p-n_0+1<p$. It follows that
					$E_{a,b}=E^{\rho}(a(\epsilon_{1}-\epsilon_n))$.
				\end{enumerate}
			\end{enumerate}
			
			Note that there are two minimal singular vectors when $|\mathbf{Q}\cap \mathbf{S}|=2$. As discussed above, this condition is equivalent to  $D(s_1)=D(s_2)$ and $n_0\neq 0$, that is, $p\nmid n$.\qedhere
		\end{enumerate} 
	\end{proof}

	\section{Types $\mathsf{D}$ and $\mathsf{E}$}
	This section is dedicated to the proof of \Cref{main2}. 
	Let $\kg$ be the Lie algebra of type $\mathsf{D}_{n}(n\geq 4)$, that is, $\kg=\kso_{2n}$.
	Recall that \cite{Humphreys} (see also \cite{tauvel2005lie}) the simple roots of $\kg$ can be realized by taking  $\alpha_i=\epsilon_i-\epsilon_{i+1}(i=1,2,\dots,n-1)$ and $\alpha_n=\epsilon_{n-1}+\epsilon_n$, where $\epsilon_1,\dots,\epsilon_n$ are orthonormal.
	The root system is the set of all vectors $\pm(\epsilon_i\pm\epsilon_j)$ with $1\leq i,j\leq n$ distinct.
	In addition, we have $\rht(\epsilon_i\pm\epsilon_j)=(n-i)\pm(n-j)$.
	The weight 
	\begin{equation}\label{rho_v_D}
		\rho=(n-1)\epsilon_1+(n-2)\epsilon_2+\cdots+\epsilon_{n-1}.
	\end{equation}
	The Weyl group $W$ is the group generated by permutations of the set $\{\epsilon_1,\dots,\epsilon_n\}$ and sign changes in an even number of coordinates, hence isomorphic to the $(\mathbb{Z}_2)^{n-1}\rtimes\mathfrak{S}_n$. 
	
	In the discussion of type $\mathsf{D}$, we shall write $a_1\epsilon_1+a_2\epsilon_2+\cdots+a_n\epsilon_n$ as an $n$-tuple $[a_1,a_2,\dots,a_n]$ for brevity.  Then we can obtain that:
	\begin{lemma}\label{D:dominant_in_orbit}
		The $W$-orbit of weight $[a_1,a_2,\dots,a_n]$ is the set
		$$\{[(-1)^{m_1}a_{\sigma(1)},\dots,(-1)^{m_n}a_{\sigma(n)}]:\sigma\in \mathfrak{S}_n,m_i=0\text{ or } 1,~2|m_1+\cdots+m_n\}.$$
		Moreover, the unique highest weight $[b_1,b_2,\dots,b_n]$ in the orbit is the one satisfying $b_1\geq b_2\geq \cdots\geq b_{n-1}\geq |b_n|$.  
	\end{lemma}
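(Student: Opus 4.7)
The plan is to prove the two assertions separately: first the orbit description by working out how the simple reflections act on the coordinates $\{\epsilon_1,\dots,\epsilon_n\}$, and then the characterization of the highest weight by reducing it to the standard dominance condition on $[b_1,\dots,b_n]$.

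For the orbit description, I would first compute the action of each simple reflection on the orthonormal basis. For $i<n$, the reflection $s_{\alpha_i}$ fixes every $\epsilon_k$ with $k\notin\{i,i+1\}$ and swaps $\epsilon_i\leftrightarrow\epsilon_{i+1}$, so collectively these generate the symmetric group $\mathfrak{S}_n$ acting by permutations. The distinguished reflection $s_{\alpha_n}$ fixes $\epsilon_k$ for $k<n-1$ and sends $\epsilon_{n-1}\mapsto -\epsilon_n$, $\epsilon_n\mapsto -\epsilon_{n-1}$, hence acts as the transposition $(n-1,n)$ followed by a sign flip on \emph{two} coordinates. Every generator is therefore a signed permutation with an even number of sign changes, which gives $W\subseteq (\mathbb{Z}_2)^{n-1}\rtimes \mathfrak{S}_n$. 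For the reverse inclusion, conjugating $s_{\alpha_n}$ by appropriate permutations realizes a double sign flip on any pair of coordinates, and such flips generate the subgroup $(\mathbb{Z}_2)^{n-1}$ of even sign changes. This yields the claimed description of the $W$-orbit.

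For the unique highest weight, I would identify ``highest in the orbit'' with the standard dominance condition $(\lambda\mid\alpha_i^\vee)\geq 0$ for all simple $\alpha_i$. Since $\kg$ is simply-laced, this becomes $(\lambda\mid\alpha_i)\geq 0$. For $i<n$ this reads $b_i-b_{i+1}\geq 0$, and for $i=n$ it reads $b_{n-1}+b_n\geq 0$; combined with $b_{n-1}\geq b_n$, the latter is equivalent to $b_{n-1}\geq |b_n|$. Thus the stated inequalities are precisely the dominance condition.

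It remains to produce such an element in each orbit and to verify uniqueness. For existence, given any $[a_1,\dots,a_n]$, I would apply a permutation to sort by absolute value in decreasing order, then use an even number of sign changes to make every entry non-negative if the count of negative entries is even, or every entry except the last non-negative if that count is odd; in either case $b_{n-1}\geq |b_n|$. Uniqueness follows from the general principle that the closed dominant Weyl chamber is a fundamental domain for $W$. One can also argue concretely: the $W$-invariant multiset $\{|a_i|\}$ determines $b_1,\dots,b_{n-1}$ and $|b_n|$ through the sorting, while the parity constraint on sign changes fixes the sign of $b_n$. I expect the only mildly technical point to be the careful bookkeeping of signs; the degenerate case $b_n=0$ causes no issue since then the sign is irrelevant.
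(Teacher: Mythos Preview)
Your proposal is correct and follows essentially the same approach as the paper: verify that the stated inequalities are precisely the dominance condition $(\lambda\mid\alpha_i)\geq 0$ for all simple roots. The paper's proof is terser---it treats the description of $W$ as $(\mathbb{Z}_2)^{n-1}\rtimes\mathfrak{S}_n$ as a standard fact stated before the lemma and only checks that the claimed form is dominant, whereas you additionally derive the Weyl group action from the simple reflections and spell out existence and uniqueness; but these are elaborations of the same argument rather than a different route.
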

	
	\begin{proof}
		Without loss of generality, we can assume that $[a_1,a_2,\dots,a_n]$ satisfies that $a_1\geq a_2\geq \cdots\geq a_{n-1}\geq |a_n|$. 
		Then we have 
		\begin{align*}
			&(\alpha_i|[a_1,a_2,\dots,a_n])=a_i-a_{i+1}\geq 0,~~i=1,2,\dots,n-1,\\
			&(\alpha_n|[a_1,a_2,\dots,a_n])=a_{n-1}+a_n\geq 0.
		\end{align*}
		This implies that $[a_1,a_2,\dots,a_n]$ is highest in its orbit.
	\end{proof}
	
	\begin{lemma}\label{D:identical_elimination}
		Let $\lambda$ be a weight, with its $n$-tuple being $[a_1,a_2,\dots,a_n]$. Then the following conditions are equivalent:
		\begin{enumerate}
			\item $|a_i|=|a_j|$ for some  $i,j$ distinct;
			\item $E(\lambda)=E^\rho(\lambda-\rho)=0$.
		\end{enumerate}
	\end{lemma}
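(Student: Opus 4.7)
The plan is to mirror the argument for Lemma~\ref{A:identical_elimination}, using part (3) of Lemma~\ref{weyl_transition} applied to $\lambda - \rho$: the condition $E(\lambda) = E^\rho(\lambda-\rho) = 0$ is equivalent to the existence of some $w \in W \setminus \{1\}$ such that $w \circ (\lambda - \rho) = \lambda - \rho$, i.e.\ $w(\lambda) = \lambda$ under the unshifted action. So it suffices to show that the stabilizer of $\lambda$ in $W$ is non-trivial if and only if $|a_i| = |a_j|$ for some distinct $i,j$. By Lemma~\ref{D:dominant_in_orbit}, an element of $W$ acts on $[a_1,\ldots,a_n]$ as a permutation of the coordinates composed with sign changes on an even number of positions, so one can work out the stabilizer combinatorially.

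For (1)$\Rightarrow$(2) I would split into cases. If $a_i = a_j$ (including $a_i = a_j = 0$), the transposition $(i,j) \in \mathfrak{S}_n$, involving no sign changes, fixes $\lambda$. If $a_i = -a_j \neq 0$, I would take the element that transposes positions $i$ and $j$ and simultaneously negates the entries at both $i$ and $j$; this involves exactly two sign changes, so it lies in $W$, and under it the new entry at position $i$ is $-a_j = a_i$ and at position $j$ is $-a_i = a_j$, so $\lambda$ is preserved. In every subcase one exhibits a non-trivial element of $W$ fixing $\lambda$.

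For (2)$\Rightarrow$(1), I would argue by contradiction: assume the $|a_i|$ are pairwise distinct and show the stabilizer is trivial. If $w \in W$ is written as a permutation $\sigma$ composed with sign changes $(m_1,\ldots,m_n) \in \{0,1\}^n$ whose sum is even, then $w(\lambda) = \lambda$ gives $(-1)^{m_i} a_{\sigma(i)} = a_i$ for each $i$, whence $|a_{\sigma(i)}| = |a_i|$ for all $i$. Distinctness of the $|a_i|$ forces $\sigma = 1$, and then $(-1)^{m_i} a_i = a_i$ forces $m_i = 0$ on every $i$ with $a_i \neq 0$. Distinctness again ensures that at most one coordinate vanishes, so at most one $m_i$ can be $1$; combined with the even-parity constraint this forces all $m_i = 0$, i.e.\ $w = 1$, a contradiction.

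The only subtlety, relative to a mechanical imitation of the type~$\mathsf{A}$ proof, is the even-parity condition on sign changes in the type~$\mathsf{D}$ Weyl group. This is precisely what makes the last step go through: a single vanishing coordinate cannot by itself produce a non-trivial element of $W$. (In types $\mathsf{B}$ or $\mathsf{C}$, where all sign-change patterns are allowed, the lemma would need to be adjusted to include $a_i = 0$ as a separate elimination condition.)
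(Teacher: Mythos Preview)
Your proof is correct and follows essentially the same approach as the paper's, which merely says the argument is analogous to Lemma~\ref{A:identical_elimination} with absolute values arising because the Weyl group may change signs. You have spelled out the details the paper leaves implicit, in particular the even-parity argument handling a possible single zero coordinate in the direction (2)$\Rightarrow$(1), and your remark about types $\mathsf{B}/\mathsf{C}$ is a nice sanity check.
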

	\begin{proof}
		The proof is analogous to \Cref{A:identical_elimination}. The absolute values are used because the Weyl group can change signs by an even number of sign flips.
	\end{proof}
	
	\subsection{$E_{\mathbf{P}}$ for type $\mathsf{D}$}
	
	Assume that  $a\in\mathbb{Z}_{\geq 1}$ and $\alpha \in\Delta^-$. 
	Recall that a root  $\alpha\in\Delta^-$ must be of the form $-(\epsilon_i+\epsilon_j)$  or $-(\epsilon_i-\epsilon_j)$ with $1\leq i<j\leq n$. So here we discuss the two cases separately:
	\begin{enumerate}
		\item When $\alpha=-(\epsilon_i+\epsilon_j)$,  it follows that
		$$E^\rho(-a\alpha)=E[n-1,n-2,\dots,n-i+a,\dots,n-j+a,\dots,0].$$
		From \Cref{D:identical_elimination}, $E^\rho(-a\alpha)$ is non-zero if and only if
		one of the following conditions holds:
		\begin{enumerate}
			\item $n-j+a\geq n$;
			\item $n-i+a\geq n,n-j+a=n-i$.
		\end{enumerate}
		\item When $\alpha=-(\epsilon_i-\epsilon_j)$,  it follows that
		$$E^\rho(-a\alpha)=E[n-1,n-2,\dots,n-i+a,\dots,n-j-a,\dots,0].$$
		From \Cref{D:identical_elimination}, $E^\rho(-a\alpha)$ is non-zero if and only if
		one of the following conditions holds:
		\begin{enumerate}
			\item $-(n-j-a)\geq n$;
			\item $n-i+a\geq n,-(n-j-a)=n-i$;
			\item $n-i+a\geq n,-(n-j-a)=n-j$.
		\end{enumerate}
	\end{enumerate}
	
	By the discussion above, we can obtain the following lemma.
	\begin{lemma}\label{D:EwithNeg}
		Assume that $a\in\mathbb{Z}_{\geq 1}$ and $\alpha\in\Delta^{-}$. 
		If $E^{\rho}(-a\alpha)$ is non-zero, it must be one of the following cases:   
		\begin{enumerate}[wide] 
			\item $\alpha=-(\epsilon_i\pm\epsilon_n)(1\leq i\leq n-1)$, $a\geq i$ and $a=n-i$, then 
			$$E^\rho(-a\alpha)=(-1)^{n} E([n-i+a,n-1,\dots,2,\pm 1]).$$
			\item $\alpha=-(\epsilon_i\pm\epsilon_j)(1\leq i<j\leq n-1)$, $a\geq i$ and $n-i=\pm(n-j)+a$, then
			$$E^\rho(-a\alpha)=(-1)^{j} E([n-i+a,n-1,\dots,\widehat{n-j},\dots, 1,0]).$$
			\item $\alpha=-(\epsilon_i-\epsilon_j)(1\leq i<j\leq n-1)$, $a\geq i$ and $2n=2j+a$, then
			$$E^\rho(-a\alpha)=-(-1)^{i} E([n-i+a,n-1,\dots,\widehat{n-i},\dots, 1,0]).$$
			\item $\alpha=-(\epsilon_i\pm\epsilon_j)(1\leq i<j\leq n)$ and $n\leq \pm(n-j)+a$, then
			$$E^\rho(-a\alpha)=
			\begin{cases}
				(-1)^{i+n+1} E([n-i+a,a,n-1,\dots,\widehat{n-i},\dots,2,\pm 1]),&j=n;\\
				(-1)^{i+j+1} E([n-i+a,\pm(n-j )+a,\dots,\widehat{n-i},\dots,\widehat{n-j},\dots, 1,0]),&j\neq n.
			\end{cases}$$
		\end{enumerate}
		Moreover, all the weights given on the right-hand side are dominant.
	\end{lemma}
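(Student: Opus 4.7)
The plan is to combine the non-vanishing criterion established in the discussion immediately preceding the statement with Lemma \ref{D:dominant_in_orbit} and the sign rule $E(w\lambda)=(-1)^{\ell(w)}E(\lambda)$ from \eqref{EWlambda}. For each pair $(a,\alpha)$ satisfying the non-vanishing conditions, I would write out the $n$-tuple of $-a\alpha+\rho$ (obtained from $\rho=[n-1,n-2,\dots,0]$ by modifying the two coordinates at positions $i$ and $j$), identify the unique dominant representative of its $W$-orbit via Lemma \ref{D:dominant_in_orbit}, and compute the sign $(-1)^{\ell(w)}$ using the fact that for $w\in W(\mathsf{D}_n)\cong(\mathbb{Z}_2)^{n-1}\rtimes\mathfrak{S}_n$ one has $\det(w)=\operatorname{sgn}(\sigma)$ where $\sigma$ is the underlying permutation (the pair of sign flips drops out because their total count is even).

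First I would match the non-vanishing scenarios from the preceding bullet list to the four cases in the statement. Case (1) is the sub-case $j=n$, in which pos $n$ is modified to $\pm a$ and the non-vanishing condition forces $a=n-i$; case (2) covers $\alpha=-(\epsilon_i\pm\epsilon_j)$ with $j<n$ where the modified pos $j$ value equals $\pm(n-i)$, crossing with the old pos $i$ entry; case (3) covers $\alpha=-(\epsilon_i-\epsilon_j)$ with $j<n$ and $a=2(n-j)$, so the modified pos $j$ entry becomes the negative of its original value; and case (4) gathers the remaining scenarios, in which $n-j\pm a$ is pushed strictly past $n-1$ in absolute value. In each case the large entry $n-i+a$ becomes the unique maximum, while the remaining coordinates form a shifted subset of $\{n-1,\dots,0\}$ (with possibly one additional large second coordinate in case (4)), so the dominant representative is readable by inspection.

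Next, in each case I would realise the sorting element $w$ as a short-cycle permutation $\sigma$ lifting $n-i+a$ to the first coordinate, composed (in case (3) and the $-$ sub-case of case (4)) with a pair-flip at positions $j$ and $n$ that absorbs the negative entry via the zero coordinate of $\rho$ — since $-0=0$, such a pair-flip acts as an effective single sign change. Because $\det(w)=\operatorname{sgn}(\sigma)$ for $W(\mathsf{D}_n)$, pair-flips contribute trivially to $(-1)^{\ell(w)}$, so the sign is determined entirely by the cycle structure of $\sigma$. Straightforward bookkeeping of these cycles — an $n$-cycle in case (1), a $j$-cycle in case (2), an $i$-cycle in case (3), and a double cycle in case (4) — yields the exponents $(-1)^n$, $(-1)^j$, $-(-1)^i=(-1)^{i-1}$, $(-1)^{i+n+1}$, and $(-1)^{i+j+1}$ asserted in the statement.

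The step I expect to require the most care is case (3). There the modified tuple carries exactly one negative coordinate at pos $j$, while the dominant representative has no negative coordinates — a transition that is only permissible in $W(\mathsf{D}_n)$ thanks to the zero coordinate at pos $n$ in $\rho$. Verifying that this ``absorbed'' sign change is correctly accounted for is precisely what pins the sign down to $(-1)^{i-1}$ rather than $(-1)^{i}$. The remaining subtlety lies in case (4), where the split between $j<n$ and $j=n$ toggles the exponent from $(-1)^{i+j+1}$ to $(-1)^{i+n+1}$, corresponding to whether the final coordinate of $\rho$ participates in the sorting cycle; once the $j=n$ branch is handled separately the remaining inversion counts are immediate.
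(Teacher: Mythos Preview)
Your proposal is correct and follows exactly the approach the paper intends: the paper's own ``proof'' is simply the sentence ``By the discussion above, we could obtain the following lemma,'' so you are fleshing out the sign and dominant-weight bookkeeping that the paper leaves to the reader, using precisely Lemmas~\ref{D:dominant_in_orbit}, \ref{D:identical_elimination}, and the rule $E(w\lambda)=(-1)^{\ell(w)}E(\lambda)$. One small remark: in case~(4) the exponent does not actually ``toggle'' between $j<n$ and $j=n$ (plugging $j=n$ into $(-1)^{i+j+1}$ already gives $(-1)^{i+n+1}$); the genuine reason for the case split is the shape of the dominant weight (last coordinate $\pm1$ versus $0$), since for $j=n$ the zero entry of $\rho$ is the one being modified.
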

	
	Define the following sets:
	$$\mathbf{X}:=\{(a,b)\in\mathbb{Z}_{\geq 1}^2:  2|b, b\geq 2n-2a,b>2a-2n,b<2n-1,a\neq b\},$$
	and
	$$\mathbf{Y}:=\{(a,b)\in\mathbb{Z}_{\geq 1}^2:a+b>2n,1\leq b-a\leq 2n-3\}.$$
	For $(a,b)\in \mathbf{X}$, define 
	$$X_{a,b}=\begin{cases}
		(-1)^{n}(E([2a,n-1,\dots,2,1])+E[2a,n-1,\dots,2,-1]),&2a=b;\\
		(-1)^{n+a+\frac{b}{2}} E([a+\frac{b}{2},n-1,n-2,\dots,\widehat{|a-\frac{b}{2}|},\dots, 1,0]),&2a\neq b.
	\end{cases}$$
	For $(a,b)\in \mathbf{Y}$, define
	\begin{align*}
		R_{a,b}^+&:=\{\alpha=-(\epsilon_i+\epsilon_j):1\leq i<j\leq n,\rht(\alpha)=a-b,j\leq a\},\\
		R_{a,b}^-&:=\{\alpha=-(\epsilon_i-\epsilon_j):1\leq i<j\leq n,\rht(\alpha)=a-b,j\geq 2n-a\},\\
		R_{a,b}&:=R_{a,b}^+\cup R_{a,b}^-\\
		&=\{\alpha=-(\epsilon_i\pm\epsilon_j): \ 1\leq i<j\leq n,\rht(\alpha)=a-b,n\leq \pm(n-j)+a\},
	\end{align*}
	and 
	$$Y_{a,b}=
	\sum_{\alpha\in R_{a,b}} E^\rho(-a\alpha).$$

	\begin{lemma}\label{D:EabXY}
		For $(a,b)\in \mathbb{Z}_{\geq 1}^2$, we have
		$$E_{a,b}=\delta_{(a,b)\in \mathbf{X}}X_{a,b}-\delta_{(a,b)\in \mathbf{X}'}X_{b,a}+\delta_{(a,b)\in \mathbf{Y}}Y_{a,b}-\delta_{(a,b)\in \mathbf{Y}'}Y_{b,a}.$$
	\end{lemma}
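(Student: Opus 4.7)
The plan is to enumerate all non-zero contributions to $E_{a,b} = \sum_{\alpha \in \Delta,\, \rht(\alpha) = a-b} E^\rho(-a\alpha)$ via Lemma \ref{D:EwithNeg}, and to match them term-by-term with the four pieces in the claimed identity. Using the antisymmetry $E_{a,b} = -E_{b,a}$ from Lemma \ref{EabEba}, I first reduce to the case $a \leq b$. When $a = b$ there is no root of height zero, so both sides vanish. When $a < b$ the sum runs over $\alpha \in \Delta^-$, and Lemma \ref{D:EwithNeg} partitions the non-zero terms into four (disjoint, as a direct check on the defining relations shows) cases.

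Cases 1, 2($+$), and 2($-$) of Lemma \ref{D:EwithNeg} correspond to the shifted $i$-th coordinate of $\rho - a\alpha$ re-entering the spectrum $\{0,1,\ldots,n-1\}$ in absolute value by matching the removed value $n-i$. For each, the indices $(i,j)$ are determined uniquely by $(a,b)$: one has $i = n-a$ with $b = 2a$ in case 1 (producing the two $\pm$ terms of $X_{a,b}$ when $2a = b$), $i = n - b/2$, $j = i + a$ in case 2($+$) (forcing $2a < b$), and $i = n - b/2$, $j = n + b/2 - a$ in case 2($-$) (forcing $2a > b$). A routine comparison of the dominant weights and of the signs confirms that the total is exactly $X_{a,b}$, and the positivity/range constraints on $(i,j)$ translate precisely into $(a,b) \in \mathbf{X}$.

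Case 3 is more subtle: the shifted $j$-th coordinate re-enters via the removed $n-j$, giving $\alpha = -(\epsilon_i - \epsilon_j)$ with $a = 2n-2j$ (so $a$ is even), $b = 2n - i - j$, $j = n - a/2$, $i = n - b + a/2$. The dominant weight $[n-i+a, n-1, \ldots, \widehat{n-i}, \ldots, 1, 0]$ rewrites as $[b + a/2, n-1, \ldots, \widehat{b-a/2}, \ldots, 1, 0]$, which is exactly the weight appearing in $X_{b,a}$; matching the signs $-(-1)^i$ from the lemma with $(-1)^{n+b+a/2}$ from the definition of $X_{b,a}$ via the congruence $i \equiv n + b + a/2 \pmod 2$ (their difference being $2b$) shows that this contribution equals $-X_{b,a}$, while the constraints on $(i,j)$ translate to $(a,b) \in \mathbf{X}'$. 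Case 4, in which both shifted entries exceed $n-1$ in absolute value, is by construction the defining property of $R_{a,b}$ and yields $Y_{a,b}$ precisely when $(a,b) \in \mathbf{Y}$.

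Assembling these four pieces gives $E_{a,b} = \delta_{\mathbf{X}} X_{a,b} - \delta_{\mathbf{X}'} X_{b,a} + \delta_{\mathbf{Y}} Y_{a,b}$ for $a < b$ (the $-\delta_{\mathbf{Y}'}Y_{b,a}$ term vanishes here since $\mathbf{Y}$ forces $b > a$), and the case $a > b$ then follows from the antisymmetry, which produces the missing $-\delta_{\mathbf{Y}'} Y_{b,a}$ term and swaps the roles of $\mathbf{X}$ and $\mathbf{X}'$ via $(b,a) \in \mathbf{X} \Leftrightarrow (a,b) \in \mathbf{X}'$ and $(b,a) \in \mathbf{Y} \Leftrightarrow (a,b) \in \mathbf{Y}'$. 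The main obstacle will be the sign tracking in case 3, where the contribution is naturally indexed by $(a,b)$ but must be identified with a term of the ``transposed'' form $-X_{b,a}$, together with the verification that the explicit constraints on $(i,j)$ in each case of Lemma \ref{D:EwithNeg} exactly match the combinatorial definitions of $\mathbf{X}$, $\mathbf{X}'$, and $\mathbf{Y}$; once these identifications are in place, the remaining assembly is routine.
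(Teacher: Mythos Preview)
Your proposal is correct and follows essentially the same route as the paper's own proof: reduce to $a<b$ via Lemma~\ref{EabEba}, then run through the four cases of Lemma~\ref{D:EwithNeg}, identifying cases 1 and 2 with the $X_{a,b}$-contribution (the paper names these $\mathbf{X}_1$ and $\mathbf{X}_2$), case 3 with $-X_{b,a}$ (the paper's $\mathbf{X}_3$), and case 4 with $Y_{a,b}$. Your sign check in case 3 and your explicit handling of the diagonal $a=b$ and the vanishing of the $\mathbf{Y}'$-term for $a<b$ are in fact slightly more spelled out than in the paper.
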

	
	\begin{proof}
		Set
		\begin{align*}
			\mathbf{X}_1:=&\{(a,b):n\leq b<2n-1,2a=b\},\\
			\mathbf{X}_2:=&\{(a,b): a<b,2|b,b\geq 2n-2a,b<2n-1,a\neq b,2a\neq b\},\\
			\mathbf{X}_3:=&
			\{(a,b): a<b,2|a,a\geq 2n-2b,a>2b-2n\}.
		\end{align*}
		Notice that $\mathbf{X}\cap \{(a,b):a<b\}$ is a disjoint union of $\mathbf{X}_1$ and $\mathbf{X}_2$, and $\mathbf{X}'\cap \{(a,b):a<b\}$ is $\mathbf{X}_3$.
		
		By \Cref{EabEba}, we may assume that $a<b$. Then it suffices to show that 
		\begin{align}\label{D:EabNegSum}
			E_{a,b}=\delta_{(a,b)\in \mathbf{X}_1}X_{a,b}
			+\delta_{(a,b)\in \mathbf{X}_2}X_{a,b}
			-\delta_{(a,b)\in \mathbf{X}_3}X_{b,a}
			+\delta_{(a,b)\in \mathbf{Y}}Y_{a,b}.
		\end{align}
		All terms in \eqref{Eab} are of the form $E^\rho(-a\alpha)$ with $\rht(\alpha)=a-b<0$, i.e., $\alpha\in\Delta^-$. 
		So \Cref{D:EwithNeg} gives all the cases of non-zero $E^\rho(-a\alpha)$ in $E_{a,b}$. 
		\begin{enumerate}[wide]
			\item $\alpha=-(\epsilon_i\pm\epsilon_n)(1\leq i\leq n-1)$, $a\geq i$ and $a=n-i$.
			
			Since $a-b=\rht(\alpha)=i-n$, we have $b=a+n-i=2a$. The condition for $a$ and $\alpha$ is equivalent to the requirement  that  $(a,b)\in \mathbf{X}_1$. 
			Now 
			$$E^\rho(-a\alpha)=(-1)^nE([2a,n-1,\dots,2,\pm 1]).$$ 
			So this case corresponds to the summand $\delta_{(a,b)\in \mathbf{X}_1}X_{a,b}$ in \eqref{D:EabNegSum}.
			
			\item  $\alpha=-(\epsilon_i\pm\epsilon_j)(1\leq i<j\leq n-1)$, $a\geq i$ and $n-i=\pm(n-j)+a$.
			
			Since $b-a=-\rht(\alpha)=(n-i)\pm(n-j)$, we have $b=2(n-i)$ and $n-j=|a-\frac{b}{2}|$. The condition for $a$ and $\alpha$ is equivalent to  the requirement that $(a,b)\in \mathbf{X}_2$. 
			Now
			$$E^\rho(-a\alpha)=(-1)^{n+a+\frac{b}{2}} E([a+\tfrac{b}{2},n-1,\dots,\widehat{\left|a-\tfrac{b}{2}\right|},\dots, 1,0]).$$
			So this case corresponds to the summand $\delta_{(a,b)\in \mathbf{X}_2}X_{a,b}$ in \eqref{D:EabNegSum}.
			
			\item $\alpha=-(\epsilon_i-\epsilon_j)(1\leq i<j\leq n-1)$, $a\geq i$ and $2n=2j+a$.
			
			We have $n-j=\frac{a}{2}$, and $i=n+\frac{a}{2}-b$ since $a-b=\rht(\alpha)=i-j$. The condition for $a$ and $\alpha$ is equivalent to  the requirement that $(a,b)\in \mathbf{X}_3$. 
			Now
			$$E^\rho(-a\alpha)=-(-1)^{n+b+\tfrac{a}{2}} E([b+\frac{a}{2},n-1,\dots,\widehat{b-\tfrac{a}{2}},\dots, 1,0]).$$
			So this case corresponds to the summand $-\delta_{(a,b)\in \mathbf{X}_3}X_{b,a}$ in \eqref{D:EabNegSum}.
			
			\item $\alpha=-(\epsilon_i\pm\epsilon_j)(1\leq i<j\leq n)$ and $n\leq \pm(n-j)+a$.
			
			Recall that $b-a=-\rht(\alpha)=(n-i)\pm(n-j)$. 
			Then 
			$$a+b=(b-a)+2a\geq (n-i)\pm(n-j)+2(n\mp (n-j))=2n+\rht(\epsilon_i\mp\epsilon_j)>2n$$ 
			and 
			$b-a=-\rht(\alpha)\leq 2n-3$.
			So $(a,b)\in\mathbf{Y}$. Then this case corresponds to the summand $\delta_{(a,b)\in \mathbf{Y}}Y_{a,b}$ in \eqref{D:EabNegSum}.\qedhere
		\end{enumerate}
	\end{proof}
	
	For any $\mathbf{P}\subset \mathbb{Z}_{\geq 1}^2$, define
	\begin{align*}
		X_{\mathbf{P}}:=&\sum_{(a,b)\in \mathbf{P}\cap\mathbf{X}}X_{a,b}-\sum_{(a,b)\in \mathbf{P}\cap\mathbf{X}'}X_{b,a},\\
		Y_{\mathbf{P}}:=&\sum_{(a,b)\in \mathbf{P}\cap\mathbf{Y}}Y_{a,b}-\sum_{(a,b)\in \mathbf{P}\cap\mathbf{Y}'}Y_{b,a}.
	\end{align*}
	\begin{lemma}\label{D:Ezero}
		Let $\mathbf{P}$ be a subset of $\mathbb{Z}_{\geq 1}^2$. Then
		\begin{enumerate}
			\item $E_{\mathbf{P}}=X_{\mathbf{P}}+Y_{\mathbf{P}}$.
			\item $E_{\mathbf{P}}=0$ if and only if $X_{\mathbf{P}}=Y_{\mathbf{P}}=0$.
		\end{enumerate}
	\end{lemma}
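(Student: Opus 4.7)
The plan is to derive (1) directly from Lemma \ref{D:EabXY}, and then reduce (2) to a support dichotomy between the dominant weights that feed $X_{\mathbf{P}}$ and those that feed $Y_{\mathbf{P}}$, combined with the linear independence of $E(\mu)$ for distinct dominant $\mu$.

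Part (1) is immediate: I sum the identity in Lemma \ref{D:EabXY} over all $(a,b)\in\mathbf{P}$; the four indicator-controlled summands regroup exactly into the four sums appearing in the definitions of $X_{\mathbf{P}}$ and $Y_{\mathbf{P}}$.

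For (2), both $X_{a,b}$ and $Y_{a,b}$ have already been written as signed integer combinations of $E(\mu)$ for explicit dominant $\mu$, and the key claim is that the two resulting families of dominant weights are disjoint. From the definition of $X_{a,b}$ every such $\mu$ is a tuple with \emph{exactly one} coordinate of absolute value $\geq n$, namely $2a$ in the case $2a=b$ or $a+\tfrac{b}{2}$ otherwise (the inequality uses $b\geq 2n-2a$), while all other entries lie in $\{0,1,\dots,n-1\}$ up to a possible sign on the last (the inequalities $b<2n-1$ and $b>2a-2n$ yield $|a-\tfrac{b}{2}|<n$). By contrast, Lemma \ref{D:EwithNeg}(4) shows that every dominant $\mu$ contributing to $Y_{a,b}$ has \emph{two} coordinates of absolute value $\geq n$: the entry $n-i+a$ together with $a$ in the subcase $j=n$ (forced by $n\leq a$), or together with $\pm(n-j)+a$ in the subcase $j\neq n$ (forced by $n\leq \pm(n-j)+a$). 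Hence no dominant weight lies in both the $X$-expansion and the $Y$-expansion.

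Since $\{E(\mu):\mu\text{ dominant}\}$ is linearly independent (the exponential $e^{\mu-\rho}$ appears with coefficient $1$ in $E(\mu)$ and in no other $E(\mu')$ with $\mu'\neq\mu$ dominant), this disjointness forces $X_{\mathbf{P}}=Y_{\mathbf{P}}=0$ whenever $X_{\mathbf{P}}+Y_{\mathbf{P}}=0$. The main obstacle I expect is the careful case-by-case verification of the ``exactly one versus at least two large coordinates'' dichotomy across the edge cases of $\mathbf{X}$ (notably $2a=b$, which produces the $\pm 1$ tail) and of $\mathbf{Y}$ (notably $j=n$, where the second large entry is $a$ rather than $\pm(n-j)+a$); once this is nailed down, everything else is routine.
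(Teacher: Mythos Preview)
Your proof is correct and follows essentially the same approach as the paper. Part (1) is handled identically; for part (2) the paper separates the $X$- and $Y$-contributions by observing that the \emph{second} coordinate of every dominant weight in $X_{\mathbf{P}}$ equals $n-1$ while for $Y_{\mathbf{P}}$ it is $\geq n$, which is exactly equivalent to your ``one versus two coordinates of absolute value $\geq n$'' dichotomy once the tuples are sorted dominantly.
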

	\begin{proof}
		The equality follows from \Cref{D:EabXY}. 
		Note that the second coordinate of the dominant weight of any term in $X_{\mathbf{P}}$ is $n-1$, while that in $Y_{\mathbf{P}}$ is $\geq n$ by \Cref{D:EwithNeg}. So $E_{\mathbf{P}}=0$ if and only if $X_{\mathbf{P}}=Y_{\mathbf{P}}=0$.
	\end{proof}
	
	We shall give an equivalent condition on $\mathbf{P}$ for $Y_{\mathbf{P}}=0$. Here are some properties of $Y_{a,b}$.
	\begin{lemma}\label{D:Yproperty}
		Let $a,b\in\mathbb{Z}_{\geq 1}$ be such that $(a,b)\in \mathbf{Y}$. Then
		\begin{enumerate}
			\item $Y_{a,b}$ is non-zero.
			\item All $Y_{a,b}$'s are linearly independent.
		\end{enumerate}
	\end{lemma}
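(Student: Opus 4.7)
The plan is to express $Y_{a,b}$ explicitly via \Cref{D:EwithNeg}(4) as a signed sum of $E(\mu_{\alpha,a})$ over $\alpha\in R_{a,b}$, where $\mu_{\alpha,a}$ is an explicit dominant weight, and then to prove the key injectivity property that the map $((a,b),\alpha)\mapsto \mu_{\alpha,a}$ is injective on pairs with $(a,b)\in\mathbf{Y}$ and $\alpha\in R_{a,b}$. Both parts of the lemma follow at once from this injectivity combined with non-emptiness of each $R_{a,b}$: within a single $Y_{a,b}$, distinct dominant weights rule out cancellation, and between different $(a,b)$'s the dominant-weight supports are disjoint.

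For non-emptiness I would set $j^\star=\lfloor(2n+a-b)/2\rfloor+1$ and $i^\star=2n+a-b-j^\star$, then use the constraints $1\leq b-a\leq 2n-3$ and $a+b>2n$ from the definition of $\mathbf{Y}$ to check $1\leq i^\star<j^\star\leq \min(a,n)$, exhibiting $-(\epsilon_{i^\star}+\epsilon_{j^\star})\in R_{a,b}^+$. For injectivity, given $\mu=\mu_{\alpha,a}$, \Cref{D:EwithNeg}(4) shows that the absolute values of $m_3,\ldots,m_n$ comprise the complement of a ``missing pair'' $\{n-i,n-j\}$ in $\{0,1,\dots,n-1\}$ (respectively the complement of $\{0,n-i\}$ when $j=n$, which is recognisable from $|m_n|=1$). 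Writing $p\leq q$ for this pair, one always has $m_1=a+q$ and $m_1+m_2=a+b$, so $a=m_1-q$ and $b=m_1+m_2-a$ are determined; then $\alpha$ is pinned down by whether $m_2=a+p$ (the $+$ subcase) or $m_2=a-p$ (the $-$ subcase), and these formulas agree only when $p=0$, i.e. $j=n$, in which case the sign of $m_n=\pm 1$ distinguishes them.

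The main obstacle is the bookkeeping in the injectivity step, in particular the $j=n$ subcase where \Cref{D:EwithNeg}(4) takes a slightly different shape and the sign of the last coordinate carries the decisive information, along with small boundary cases (notably $i=n-1$) where the dominant weight must be re-derived directly from $\rho-a\alpha$ rather than read off the displayed formula.
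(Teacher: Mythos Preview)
Your proposal is correct and follows essentially the same approach as the paper's own proof: the paper also invokes case (4) of \Cref{D:EwithNeg}, asserts (without spelling out the details) that the dominant weights attached to distinct $(a,\alpha)$ are distinct, and proves non-emptiness of $R_{a,b}$ via exactly the same explicit choice $j^\star=\lfloor(2n+a-b)/2\rfloor+1$, $i^\star=2n+a-b-j^\star$. Your write-up is in fact more detailed than the paper's on the injectivity step; the one small inaccuracy is the ``recognisable from $|m_n|=1$'' remark (the robust criterion is whether $0$ lies among $|m_3|,\dots,|m_n|$, equivalently whether the smaller element of the missing pair is $0$), but you already flag the $i=n-1$ boundary case where this matters, and it does not affect the argument.
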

	
	\begin{proof}
		Any $E^\rho(-a\alpha)$ with $\alpha\in R_{a,b}$ must be as the last case in \Cref{D:EwithNeg}. It can be verified that the dominant weights of any two such $E^\rho(-a\alpha)$'s cannot be the same. This implies that all $E^\rho(-a\alpha)$'s with $\alpha\in R_{a,b}$ are linearly independent.
		
		Now it suffices to show that $R_{a,b}$ is non-empty for any $(a,b)\in \mathbf{Y}$. 
		Set $j=\lfloor(2n+a-b)/2\rfloor+1$ and $i=2n+a-b-j$. 
		Then the root $\alpha=-(\epsilon_i+\epsilon_j)$ is in $R_{a,b}^+$ since $i+j-2n=a-b=\rht(\alpha)$, $1\leq i<j\leq n$ and
		$$j=\lfloor(2n+2a-(a+b))/2\rfloor+1
		=n+a-\lceil (a+b)/2\rceil+1\leq n+a-(n+1)+1=a.$$
		Then the result follows.
	\end{proof}
	Define 
	\begin{align*}
		\mathbf{N}:=&\{(a,b):a+b>2n,|a-b|\leq 2n-3\}.
	\end{align*}

	\begin{lemma}\label{D:Yzero}
		Let $\mathbf{P}$ be a subset of $\mathbb{Z}_{\geq 1}^2$, and $\mathbf{Q}=A(\mathbf{P})$. 
		Then $Y_{\mathbf{P}}=Y_{\mathbf{Q}\cap \mathbf{N}}$, and $Y_{\mathbf{P}}=0$ if and only if $\mathbf{Q}\cap \mathbf{N}=\varnothing$.
	\end{lemma}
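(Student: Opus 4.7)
The plan is to mimic the derivation of \eqref{E_A(P)} for the functional $Y_{(-)}$, and then to upgrade vanishing of the resulting sum to vanishing of its index set via the linear independence in \Cref{D:Yproperty}(2).

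For the first identity, the substitution $(a,b)\mapsto(b,a)$ applied directly to the definition of $Y_{\mathbf{P}}$ gives $Y_{\mathbf{P}'}=-Y_{\mathbf{P}}$, which is the exact analogue of \Cref{EabEba}. The same pairing computation that proves \eqref{E_A(P)} then yields $Y_{\mathbf{P}}=Y_{A(\mathbf{P})}=Y_{\mathbf{Q}}$, since contributions from the symmetric subset $\mathbf{P}\cap\mathbf{P}'$ cancel in pairs. Since $\mathbf{Y}\cup\mathbf{Y}'\subseteq\mathbf{N}$, the value $Y_{\mathbf{Q}}$ depends on $\mathbf{Q}$ only through $\mathbf{Q}\cap(\mathbf{Y}\cup\mathbf{Y}')$, hence only through $\mathbf{Q}\cap\mathbf{N}$, and therefore $Y_{\mathbf{Q}}=Y_{\mathbf{Q}\cap\mathbf{N}}$.

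For the equivalence, the ``if'' direction is immediate. For the converse, I would use the disjoint decomposition $\mathbf{N}=\mathbf{Y}\sqcup\mathbf{Y}'\sqcup\{(a,a):a>n\}$ together with $\mathbf{Q}\cap\mathbf{Q}'=\varnothing$, which follows from the very definition of $A(\mathbf{P})$ and in particular forces $\mathbf{Q}$ to contain no diagonal pairs; hence $\mathbf{Q}\cap\mathbf{N}\subseteq\mathbf{Y}\sqcup\mathbf{Y}'$. After reindexing the $\mathbf{Y}'$-summand of $Y_{\mathbf{Q}\cap\mathbf{N}}$ by $(a,b)\mapsto(b,a)$, one obtains
\[
Y_{\mathbf{Q}\cap\mathbf{N}}=\sum_{(a,b)\in(\mathbf{Q}\cap\mathbf{N})\cap\mathbf{Y}}Y_{a,b}\;-\;\sum_{(a,b)\in(\mathbf{Q}'\cap\mathbf{N})\cap\mathbf{Y}}Y_{a,b},
\]
where the two index sets, both subsets of $\mathbf{Y}$, are disjoint precisely by $\mathbf{Q}\cap\mathbf{Q}'=\varnothing$. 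Now \Cref{D:Yproperty}(2) asserts that the $Y_{a,b}$'s with $(a,b)\in\mathbf{Y}$ are linearly independent, so the vanishing of this sum forces every index to drop out, i.e., $\mathbf{Q}\cap\mathbf{N}=\varnothing$.

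The only real obstacle is the bookkeeping: one must carefully track how the partition of $\mathbf{N}$ interacts with $\mathbf{Q}\cap\mathbf{Q}'=\varnothing$ so that the two index sets appearing in the final sum truly do not collide. Once this is in place, everything else is formal manipulation inside the free abelian group generated by the $Y_{a,b}$'s, and the hard content (namely the linear independence of the $Y_{a,b}$'s) has already been extracted in \Cref{D:Yproperty}.
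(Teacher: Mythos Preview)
Your proposal is correct and follows essentially the same route as the paper: both arguments pass from $Y_{\mathbf P}$ to $Y_{\mathbf Q\cap\mathbf N}$ by cancelling the symmetric part $\mathbf P\cap\mathbf P'$ and using $\mathbf N=\mathbf Y\sqcup\mathbf Y'\sqcup\{(a,a):a>n\}$, and both deduce the vanishing criterion from $\mathbf Q\cap\mathbf Q'=\varnothing$ together with the linear independence in \Cref{D:Yproperty}. The only cosmetic difference is that you isolate $Y_{\mathbf P'}=-Y_{\mathbf P}$ as a separate step (paralleling \eqref{E_A(P)}), whereas the paper does the same cancellation inline via the chain $Y_{\mathbf P}=Y_{\mathbf P\cap\mathbf Y}-Y_{\mathbf P'\cap\mathbf Y}=Y_{\mathbf Q\cap\mathbf Y}-Y_{\mathbf Q'\cap\mathbf Y}=Y_{\mathbf Q\cap\mathbf Y}+Y_{\mathbf Q\cap\mathbf Y'}=Y_{\mathbf Q\cap\mathbf N}$.
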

	
	\begin{proof}
		Notice  that
		\begin{align*}
			Y_{\mathbf{P}}
			=Y_{\mathbf{P}\cap\mathbf{Y}}-Y_{\mathbf{P}'\cap\mathbf{Y}}
			=Y_{\mathbf{Q}\cap\mathbf{Y}}-Y_{\mathbf{Q}'\cap\mathbf{Y}}
			=Y_{\mathbf{Q}\cap\mathbf{Y}}+Y_{\mathbf{Q}\cap\mathbf{Y}'}
			=Y_{\mathbf{Q}\cap \mathbf{N}},
		\end{align*}
		since $\mathbf{N}$ is the disjoint union of $\mathbf{Y},\mathbf{Y}'$ and $\{(a,a):a>n\}$.
		
		Since all $Y_{a,b}$ with $(a,b)\in\mathbf{Y}$ are linearly independent by \Cref{D:Yproperty}, the result follows from $(\mathbf{Q}\cap \mathbf{N})\cap (\mathbf{Q}\cap \mathbf{N})'=\varnothing$.
	\end{proof}

	\subsection{$M_{p,D}$ for type $\mathsf{D}$}
	
	Recall that $M_{p,D}=E_{\mathbf{P}}$ when $\mathbf{P}=\mathbf{P}_{p,D}$.
	Define the following sets:
	\begin{align*}
		{\mathbf{R}}_{p,D}:=&\{(a,b):2ab=pD,p|2b,p\nmid b\};\\
		{\mathbf{U}}_{p,D}:=&\{(a,b):2ab=pD,p|2b,p\nmid a\};\\
		{\mathbf{V}}_{p,D}:=&\{(a,b):2ab=pD,p|a,p\nmid 2b\};\\
		\mathbf{C}_0:=&\{(a,b):a+b\geq n,a\leq n-1,b\leq n-1\};\\
		\mathbf{C}_1:=&\{(a,b):a\geq n,b\leq n-1,a<b+n\}.
	\end{align*}
	
	Define 
	$$\widetilde{X}_t=\begin{cases}
		(-1)^{n}(E([t,n-1,\dots,2,1])+E[t,n-1,\dots,2,-1]),&t^2=2pD,\\
		(-1)^{n+t} E([t,n-1,n-2,\dots,
		\widehat{\sqrt{t^2-2pD}},\dots, 1,0]),&t^2\neq 2pD
	\end{cases}$$
	for $t$ satisfying that $t\geq n$ and $t^2-2pD\in\{0,1^2,2^2,\dots,(n-1)^2\}$. Notice that all such $\widetilde{X}_t$'s are linearly independent.
	
	For $(a,b)\in \mathbf{P}_{p,D}\cap\mathbf{X}$, we have $X_{a,b}=\widetilde{X}_{a+\frac{b}{2}}$, since
	$$|a-\tfrac{b}{2}|=\sqrt{(a+\tfrac{b}{2})^2-2ab}=\sqrt{(a+\tfrac{b}{2})^2-2pD}.$$ 
	Now $X_{\mathbf{P}}$ can be written as
	\begin{align}\label{D:Xalt}
		X_{\mathbf{P}}=\sum_{(a,b)\in \mathbf{P}\cap\mathbf{X}}\widetilde{X}_{a+\frac{b}{2}}
		-\sum_{(a,b)\in\mathbf{P}\cap \mathbf{X}'}\widetilde{X}_{b+\frac{a}{2}}.
	\end{align}
	
	\begin{lemma}\label{D:Xzero}
		Set $\mathbf{P}=\mathbf{P}_{p,D}$.
		Then 
		$$X_{\mathbf{P}}=\widetilde{X}_{\mathbf{R}_{p,D}\cap \mathbf{C}_0}
		+\widetilde{X}_{\mathbf{U}_{p,D}\cap \mathbf{C}_1}-\widetilde{X}_{\mathbf{V}_{p,D}\cap \mathbf{C}_1},$$
		where
		$$\widetilde{X}_{\mathbf{A}}:=\sum_{(a,b)\in\mathbf{A}} \widetilde{X}_{a+b}.$$
		Moreover, the sum $X_{\mathbf{P}}=0$
		if and only if $\mathbf{R}_{p,D}\cap \mathbf{C}_0=\mathbf{U}_{p,D}\cap\mathbf{C}_1=\mathbf{V}_{p,D}\cap\mathbf{C}_1=\varnothing$.
	\end{lemma}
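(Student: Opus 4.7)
The plan is to reduce both sums defining $X_{\mathbf{P}}$ to sums over pairs $(a,c)$ with $2ac=pD$ via the substitution $c=b/2$ (possible because $\mathbf{X}$ requires $2\mid b$), then use the symmetry of $\mathbf{C}_0$ under coordinate swap to absorb the $\mathbf{C}_0$-contributions into the $\widetilde{X}_{\mathbf{R}_{p,D}\cap\mathbf{C}_0}$ term. For the ``moreover'' part, linear independence of the $\widetilde{X}_t$'s reduces the question to showing that no cancellation occurs, which I would check by parametrizing pairs of a fixed sum $t=a+c$.

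First, starting from \eqref{D:Xalt}, I would substitute $c=b/2$ in the first sum and $c=a/2$ (with a relabeling of the dummy pair) in the second. Unfolding definitions, the condition $(a,2c)\in\mathbf{X}$ translates to $(a,c)\in\mathbf{C}_0\cup\mathbf{C}_1$ together with $a\neq 2c$, while $(a,b)\in\mathbf{P}_{p,D}$ becomes $\{2ac=pD,\ p\mid 2c\}$ in the first sum and $\{2ac=pD,\ p\mid a\}$ in the second. The overlap $\{p\mid a,\ p\mid 2c\}$ cancels with opposite signs, and since $\mathbf{U}_{p,D}$ and $\mathbf{V}_{p,D}$ both automatically exclude $a=2c$ (any such pair would force $p\mid a$ and $p\nmid a$ simultaneously), this produces
$$X_{\mathbf{P}}=\widetilde{X}_{\mathbf{U}_{p,D}\cap(\mathbf{C}_0\cup\mathbf{C}_1)}-\widetilde{X}_{\mathbf{V}_{p,D}\cap(\mathbf{C}_0\cup\mathbf{C}_1)}.$$

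Next, I would split off the $\mathbf{C}_0$ part and use the involution $(a,c)\leftrightarrow(c,a)$, which preserves $\mathbf{C}_0$ and $\widetilde{X}_{a+c}$. For $p$ odd, both $\mathbf{U}_{p,D}\cap\mathbf{C}_0$ and $\mathbf{V}_{p,D}\cap\mathbf{C}_0$ reduce after swap to the common condition $\{p\mid c,\ p\nmid a\}$, so the two sums cancel and agree with $\widetilde{X}_{\mathbf{R}_{p,D}\cap\mathbf{C}_0}=0$ (as $\mathbf{R}_{p,D}=\varnothing$ for $p$ odd). For $p=2$, $\mathbf{V}_{p,D}=\varnothing$ and the swap identifies $\mathbf{U}_{p,D}\cap\mathbf{C}_0$ with $\mathbf{R}_{p,D}\cap\mathbf{C}_0$, again yielding the desired $\widetilde{X}_{\mathbf{R}_{p,D}\cap\mathbf{C}_0}$. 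Combined with the untouched $\mathbf{C}_1$-contributions, this gives the claimed formula. The main obstacle here is carefully tracking the parity/divisibility conditions separately in the $p$ odd and $p=2$ cases and confirming the swap really matches the right pairs.

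For the ``moreover'' assertion, I would use that the $\widetilde{X}_t$'s are linearly independent (each corresponds to a distinct dominant weight). For a fixed eligible $t$, any pair $(a,c)$ with $2ac=pD$ and $a+c=t$ must be $(A,C)=((t+k)/2,(t-k)/2)$ or its swap $(C,A)$, where $k=\sqrt{t^2-2pD}\in\{0,1,\dots,n-1\}$. Because $\mathbf{C}_1$ requires first coordinate $\geq n$ and second $\leq n-1$, only $(A,C)$ (not $(C,A)$) can lie in $\mathbf{C}_1$; whereas $\mathbf{C}_0$, if applicable, contains both. So the coefficient of $\widetilde{X}_t$ is either $\pm 1$ or $0$ (from the single candidate in $\mathbf{C}_1$, using that $\mathbf{U}_{p,D}\cap\mathbf{V}_{p,D}=\varnothing$), or a non-negative count coming from $\mathbf{R}_{p,D}\cap\mathbf{C}_0$. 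In neither case can a non-empty contribution cancel to zero, so $X_{\mathbf{P}}=0$ forces each of $\mathbf{R}_{p,D}\cap\mathbf{C}_0$, $\mathbf{U}_{p,D}\cap\mathbf{C}_1$, $\mathbf{V}_{p,D}\cap\mathbf{C}_1$ to be empty, completing the proof.
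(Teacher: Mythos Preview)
Your overall strategy mirrors the paper's: halve the even coordinate to land in $\mathbf{C}_0\cup\mathbf{C}_1$, then use the coordinate swap on the symmetric piece $\mathbf{C}_0$. The first reduction to
\[
X_{\mathbf{P}}=\widetilde{X}_{\mathbf{U}_{p,D}\cap(\mathbf{C}_0\cup\mathbf{C}_1)}-\widetilde{X}_{\mathbf{V}_{p,D}\cap(\mathbf{C}_0\cup\mathbf{C}_1)}
\]
is fine, and your treatment of the ``moreover'' clause is correct.

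The gap is in your handling of the $\mathbf{C}_0$-contribution. Your case split ``$p$ odd'' versus ``$p=2$'' does not cover even $p\geq 4$, and in that range your argument breaks down: for such $p$ the set $\mathbf{R}_{p,D}$ is nonempty in general, $\mathbf{V}_{p,D}$ is nonempty in general, and the swap of $\mathbf{V}_{p,D}$ is \emph{not} equal to $\mathbf{U}_{p,D}$ (for instance, with $p=4$ one gets $\mathbf{U}=\{2\mid c,\,4\nmid a\}$ while the swap of $\mathbf{V}$ is $\{4\mid c,\,2\nmid a\}$). So the identity
$\widetilde{X}_{\mathbf{U}\cap\mathbf{C}_0}-\widetilde{X}_{\mathbf{V}\cap\mathbf{C}_0}=\widetilde{X}_{\mathbf{R}\cap\mathbf{C}_0}$
is left unproved for even $p\geq 4$.

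The clean, uniform fix (which is what the paper does) is to \emph{undo} your early cancellation on the $\mathbf{C}_0$-piece: re-add the common overlap to write the difference as $\widetilde{X}_{C(\mathbf{P})\cap\mathbf{C}_0}-\widetilde{X}_{C(\mathbf{P}')\cap\mathbf{C}_0}$, then swap only the second term to get $\widetilde{X}_{C(\mathbf{P})\cap\mathbf{C}_0}-\widetilde{X}_{C(\mathbf{P}')'\cap\mathbf{C}_0}$. Now the containment $C(\mathbf{P}')'=\{p\mid c\}\subset\{p\mid 2c\}=C(\mathbf{P})$ holds for every $p$, and the difference is exactly $\widetilde{X}_{\mathbf{R}\cap\mathbf{C}_0}$. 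In short, cancelling the overlap \emph{before} swapping is what forces you into an awkward case split; swap first, cancel second.
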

	\begin{proof} We shall simply use the notation $\mathbf{P}$ (resp. $\mathbf{Q}$, etc.) instead of $\mathbf{P}_{p,D}$ (resp. $\mathbf{Q}_{p,D}$, etc.) if there is no ambiguity. 
		Also while deducing the formula of $X_{\mathbf{P}}$, we would regard all $\widetilde{X}_t$'s indexed by $t\in\mathbb{R}$ as linearly independent formal vectors.
		
		Define 
		$$\overline{\mathbf{X}}:=\{(a,b):   2|b,b\geq 2n-2a,b>2a-2n,b<2n-1\}.$$ Notice that
		$$\overline{\mathbf{X}}\setminus\mathbf{X}=\{(a,a):  2|a,3a\geq 2n,a<2n-1\}=\overline{\mathbf{X}}'\setminus\mathbf{X}'.$$
		Then from \eqref{D:Xalt}, we have
		\begin{align*}
			X_{\mathbf{P}}=\sum_{(a,b)\in \mathbf{P}\cap\overline{\mathbf{X}}} \widetilde{X}_{a+\frac{b}{2}}
			-\sum_{(a,b)\in \mathbf{P}\cap\overline{\mathbf{X}}'}\widetilde{X}_{b+\frac{a}{2}}
			=\sum_{(a,b)\in \mathbf{P}\cap\overline{\mathbf{X}}} \widetilde{X}_{a+\frac{b}{2}}
			-\sum_{(a,b)\in \mathbf{P}'\cap\overline{\mathbf{X}}}\widetilde{X}_{a+\frac{b}{2}}.
		\end{align*}
		
		Define a mapping $C$ which maps a set of pairs $\mathbf{A}\subset  \mathbb{Z}_{\geq 1}^2$ to
		$$C(\mathbf{A}):=\{(a,b)\in\mathbb{Z}_{\geq 1}\times\mathbb{Z}_{\geq 1}:(a,2b)\in \mathbf{A}\}.$$
		Then
		\begin{align*}
			C(\overline{\mathbf{X}})&=\{(a,b):b\geq n-a,b>a-n,b\leq n-1\},\\
			C(\mathbf{P})&=\{(a,b):2ab=pD,p|2b\},\\
			C(\mathbf{P}')&=\{(a,b):2ab=pD,p|a\}.
		\end{align*}
		Since $2|b$ for all $(a,b)\in\overline{\mathbf{X}}$,  the sum $X_{\mathbf{P}}$ can be written as
		$$X_{\mathbf{P}}=\sum_{(a,b)\in C(\mathbf{P}\cap\overline{\mathbf{X}})} \widetilde{X}_{a+b}
		-\sum_{(a,b)\in C(\mathbf{P}'\cap\overline{\mathbf{X}})}\widetilde{X}_{a+b}
		=\widetilde{X}_{C(\mathbf{P}\cap\overline{\mathbf{X}})}
		-\widetilde{X}_{C(\mathbf{P}'\cap\overline{\mathbf{X}})}.$$
		
		Note that $\mathbf{C}_0=C(\overline{\mathbf{X}})\cap C(\overline{\mathbf{X}})'$ is symmetric, and that 
		$C(\overline{\mathbf{X}})$ is the disjoint union of $\mathbf{C}_0$ and $\mathbf{C}_1$, so
		\begin{align*}
			X_{\mathbf{P}}
			=&\widetilde{X}_{C(\mathbf{P})\cap \mathbf{C}_0}
			+\widetilde{X}_{C(\mathbf{P})\cap \mathbf{C}_1}
			-\widetilde{X}_{C(\mathbf{P}')\cap \mathbf{C}_0}
			-\widetilde{X}_{C(\mathbf{P}')\cap \mathbf{C}_1}\\
			=&(\widetilde{X}_{C(\mathbf{P})\cap \mathbf{C}_0}-\widetilde{X}_{C(\mathbf{P}')'\cap \mathbf{C}_0})
			+\widetilde{X}_{C(\mathbf{P})\cap \mathbf{C}_1}-\widetilde{X}_{C(\mathbf{P}')\cap \mathbf{C}_1}.
		\end{align*}
		Then 
		\begin{align*}
			X_{\mathbf{P}}=&\widetilde{X}_{\mathbf{R}\cap \mathbf{C}_0}
			+\widetilde{X}_{C(\mathbf{P})\cap \mathbf{C}_1}-\widetilde{X}_{C(\mathbf{P}')\cap \mathbf{C}_1}\\
			=&\widetilde{X}_{\mathbf{R}\cap \mathbf{C}_0}
			+\widetilde{X}_{\mathbf{U}\cap \mathbf{C}_1}-\widetilde{X}_{\mathbf{V}\cap \mathbf{C}_1},
		\end{align*}
		since $C(\mathbf{P}')'\subset C(\mathbf{P})$,  $\mathbf{R}=C(\mathbf{P})\setminus C(\mathbf{P}')'$, $\mathbf{U}=C(\mathbf{P})\setminus C(\mathbf{P}')$ and $\mathbf{V}=C(\mathbf{P}')\setminus C(\mathbf{P})$. 
		
		Recall that all $\widetilde{X}_t$'s are linearly independent, and we have the following fact: for $(a_1,b_1)$, $(a_2,b_2)\in \{(a,b): 2ab=pD\}$, the sum $a_1+b_1=a_2+b_2$ if and only if $(a_1,b_1)=(a_2,b_2)$ or $(a_1,b_1)=(b_2,a_2)$. Then the equivalent condition for $X_{\mathbf{P}}=0$ follows from that $\mathbf{C}_0,\mathbf{C}_1,\mathbf{C}_1'$ are disjoint.
	\end{proof}

	Define
	\begin{align*}
		\mathbf{Q}_{p,D}:=&A(\mathbf{P}_{p,D})=\{(a,b):  ab=pD,p|b,p\nmid a\}.
	\end{align*}

	Combining \Cref{D:Ezero,D:Xzero,D:Yzero}, we obtain the following result on $M_{p,D}$.
	\begin{lemma}\label{D:criterion4M}
		Set $\mathbf{P}=\mathbf{P}_{p,D}$.
		Then 
		\begin{align}
			M_{p,D}=\widetilde{X}_{\mathbf{R}_{p,D}\cap \mathbf{C}_0}
			+\widetilde{X}_{\mathbf{U}_{p,D}\cap \mathbf{C}_1}-\widetilde{X}_{\mathbf{V}_{p,D}\cap \mathbf{C}_1}
			+Y_{\mathbf{Q}_{p,D}\cap \mathbf{N}}.
		\end{align}
		Furthermore,  $M_{p,D}=0$ if and only if 
		$$\mathbf{R}_{p,D}\cap \mathbf{C}_0=\mathbf{U}_{p,D}\cap\mathbf{C}_1=\mathbf{V}_{p,D}\cap\mathbf{C}_1=\mathbf{Q}_{p,D}\cap \mathbf{N}=\varnothing.$$
	\end{lemma}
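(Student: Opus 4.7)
The proof is essentially an assembly of the three preceding lemmas (\Cref{D:Ezero}, \Cref{D:Xzero}, \Cref{D:Yzero}) applied to the specific set $\mathbf{P} = \mathbf{P}_{p,D}$. My plan is as follows.

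First, I would start from the identity $M_{p,D} = E_{\mathbf{P}_{p,D}}$, which is exactly \eqref{ee1}. By \Cref{D:Ezero}(1), this decomposes as $M_{p,D} = X_{\mathbf{P}_{p,D}} + Y_{\mathbf{P}_{p,D}}$. I would then handle these two summands separately. For the $X$-part, I would apply \Cref{D:Xzero} directly with $\mathbf{P} = \mathbf{P}_{p,D}$ to obtain
$$X_{\mathbf{P}_{p,D}} = \widetilde{X}_{\mathbf{R}_{p,D}\cap\mathbf{C}_0} + \widetilde{X}_{\mathbf{U}_{p,D}\cap\mathbf{C}_1} - \widetilde{X}_{\mathbf{V}_{p,D}\cap\mathbf{C}_1}.$$
For the $Y$-part, I would invoke \Cref{D:Yzero} with $\mathbf{Q} = A(\mathbf{P}_{p,D}) = \mathbf{Q}_{p,D}$ (the equality here uses the already-recorded identification of $A(\mathbf{P}_{p,D})$), producing $Y_{\mathbf{P}_{p,D}} = Y_{\mathbf{Q}_{p,D}\cap\mathbf{N}}$. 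Summing these two gives the claimed formula for $M_{p,D}$.

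For the vanishing criterion, the key observation is the linear independence of the $X$-part and the $Y$-part, which is exactly the content of \Cref{D:Ezero}(2) (based on the fact that the dominant weights appearing in $X$-terms have second coordinate equal to $n-1$, whereas those appearing in $Y$-terms have second coordinate $\geq n$, as recorded via \Cref{D:EwithNeg}). Therefore $M_{p,D} = 0$ if and only if $X_{\mathbf{P}_{p,D}} = 0$ and $Y_{\mathbf{P}_{p,D}} = 0$ separately. Applying once more the vanishing parts of \Cref{D:Xzero} and \Cref{D:Yzero} yields the four empty-set conditions on $\mathbf{R}_{p,D}\cap\mathbf{C}_0$, $\mathbf{U}_{p,D}\cap\mathbf{C}_1$, $\mathbf{V}_{p,D}\cap\mathbf{C}_1$, and $\mathbf{Q}_{p,D}\cap\mathbf{N}$.

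Since the substantive content has already been extracted into the three preceding lemmas, I do not expect a real obstacle here; the proof amounts to bookkeeping. The only tiny point worth double-checking is the identification $A(\mathbf{P}_{p,D}) = \mathbf{Q}_{p,D}$ in the definition just before the lemma: one should verify that removing from $\mathbf{P}_{p,D}$ those pairs $(a,b)$ with $(b,a)\in \mathbf{P}_{p,D}$ (i.e.\ with $p\mid a$ as well) leaves exactly the pairs with $p\mid b$ and $p\nmid a$. This is immediate from the definition $\mathbf{P}_{p,D} = \{(a,b):ab=pD,\ p\mid b\}$, so no further work is required.
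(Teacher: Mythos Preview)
Your proposal is correct and matches the paper's own proof, which consists of the single sentence ``Combining \Cref{D:Ezero,D:Xzero,D:Yzero}, we obtain the following result on $M_{p,D}$.'' You have simply spelled out explicitly how these three lemmas combine, which is exactly the intended argument.
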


	\subsection{Values of $D_p$ and corresponding weights}
	
	For simplicity, we shall simply use the notation $\mathbf{P}$ (resp. $\mathbf{Q}$, etc.) instead of $\mathbf{P}_{p,D}$ (resp. $\mathbf{Q}_{p,D}$, etc.) if there is no ambiguity. 
	
	\begin{lemma}\label{D:Dp}
		For $n\in\mathbb{Z}_{\geq 4},p\in\mathbb{Z}_{\geq 2}$ with $p<2n-2$, we have the following results on $D_p$:
		\begin{enumerate}[wide] 
			\item When $p=3,3|n-1$, $D_p\geq 2n-1$;
			\item When $p=5,n=7$, $D_p\geq 11$;
			\item In all other cases, set 
			$$s_1=\lfloor (2n-1)/p\rfloor,~ s_2=\lceil (2n-1)/p\rceil.$$
			Then $D_p\geq \min\{D_{(0)},D_{(1)}(s_1),D_{(2)}(s_2)\}$, where 
			$$D_{(0)}=
			\begin{cases}
				n-\frac{p}{2},& 2|p,\\
				+\infty,& 2\nmid p,
			\end{cases}$$
			$$D_{(1)}(s):=
			\begin{cases}
				s(n-sp/2),& 2\nmid s(p-1),\\
				s(2n-sp+1),& 2|s(p-1),
			\end{cases}$$
			and
			$$D_{(2)}(s):=
			\begin{cases}
				s(sp-2n+3),& 2\nmid s,\\
				s(sp/2-n+1),& 2| s.
			\end{cases}$$
		\end{enumerate}
	\end{lemma}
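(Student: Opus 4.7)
The plan is to apply \Cref{D:criterion4M}, which asserts that $M_{p,D} = 0$ if and only if the four sets $\mathbf{R}_{p,D}\cap\mathbf{C}_0$, $\mathbf{U}_{p,D}\cap\mathbf{C}_1$, $\mathbf{V}_{p,D}\cap\mathbf{C}_1$, $\mathbf{Q}_{p,D}\cap\mathbf{N}$ are simultaneously empty. Hence $D_p$ is the smallest $D$ for which at least one of them is non-empty, and to bound $D_p$ from below it suffices to exhibit, for each of the four sets separately, an inequality on $D$ forced by the defining divisibility and geometric constraints, and then take the minimum.

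First I would treat $\mathbf{Q}_{p,D}\cap\mathbf{N}$. Parametrize via $b = ps$ and $a = D/s$ (so $s \mid D$); the condition $a + b > 2n$ yields $D > s(2n - ps)$ when $ps < 2n$, matching the branch $s(2n-sp+1)$ of $D_{(1)}(s)$ evaluated at $s = s_1$, while $|a - b| \leq 2n-3$ yields $D \geq s(ps - 2n + 3)$ when $ps \geq 2n$, matching the $s_2$-branch of $D_{(2)}$. I would invoke the appendix optimization \Cref{Appen1} to confirm that the relevant quadratics in $s$ are extremized on integer $s$ precisely at $s_1$ and $s_2$. Next, the three ``$X$-type'' sets $\mathbf{R}\cap\mathbf{C}_0$, $\mathbf{U}\cap\mathbf{C}_1$, $\mathbf{V}\cap\mathbf{C}_1$ all involve $2ab = pD$ in place of $ab = pD$, which effectively halves the resulting bound; moreover the conditions $p \mid 2b$ (resp. $p \nmid 2b$) are only non-vacuous once a parity distinction between $p$ even and $p$ odd is made. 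For $\mathbf{R}\cap\mathbf{C}_0$ (which forces $p$ even), the smallest admissible $b$ is $b = p/2$, yielding $a = D$ and the constraint $a + b \geq n$ gives $D \geq n - p/2 = D_{(0)}$; this pair $(a,b) = (n - p/2,\, p/2)$ does lie in $\mathbf{R}_{p,D_{(0)}} \cap \mathbf{C}_0$, so the bound is sharp when $p$ is even. A parallel parametrization for $\mathbf{U}\cap\mathbf{C}_1$ and $\mathbf{V}\cap\mathbf{C}_1$, taking the smallest admissible $b$ of the required parity, produces the halved branch $s(n - sp/2)$ of $D_{(1)}$ under the parity condition $2 \nmid s(p-1)$. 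Combining the four contributions yields the generic inequality $D_p \geq \min\{D_{(0)}, D_{(1)}(s_1), D_{(2)}(s_2)\}$ of item (3).

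The main obstacle, and where the exceptional items (1) and (2) arise, is ensuring that the extremal pair saturating the generic bound actually belongs to the relevant set rather than merely meeting the real-valued inequality. When $p = 3$ and $3 \mid n-1$, the candidate $(a,b)$ achieving $D_{(1)}(s_1) = 2n-2$ has $a = 3 = p$, violating $p \nmid a$ in $\mathbf{Q}_{p,D}$; one must then step $a$ up to the next residue, producing the improved bound $D_p \geq 2n-1$. A parallel divisibility collision occurs at $p = 5, n = 7$. For these finitely many exceptional $(n,p)$ I would enumerate directly all factorizations $ab = pD$ (and $2ab = pD$) with $D$ below the generic bound, checking against each of the four sets that no pair survives the divisibility test, and then identifying the smallest $D$ that does admit a valid pair. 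The principal bookkeeping challenge is managing the four simultaneous parity-dependent cases, but once phrased through the parametrizations above it reduces to a finite and mechanical verification.
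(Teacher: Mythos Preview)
Your overall strategy matches the paper's: invoke \Cref{D:criterion4M}, for each of the four sets find the least $D$ making it non-empty, and take the minimum; the exceptional cases (1) and (2) arise exactly as you say, because the extremizing pair for $\mathbf{Q}\cap\mathbf{N}$ has $p\mid a$, and the paper likewise dispatches them by direct enumeration. However, your attribution of which set produces which branch of the bound is wrong in several places, and this would derail the details.

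First, $\mathbf{R}\cap\mathbf{C}_0$ does not only yield $D_{(0)}$. Writing $b=sp/2$ with $s$ odd (the condition $p\mid 2b,\ p\nmid b$ forces $p$ even and $s$ odd), the constraints $(a,b)\in\mathbf{C}_0$ and $2ab=pD$ give $D\geq s(n-sp/2)$ for every odd $s\leq s_1$; the minimum over these is attained at $s=1$ \emph{or} $s=s_1$, so this set also produces the halved branch of $D_{(1)}(s_1)$ when $s_1$ is odd and $p$ is even. Second, $\mathbf{V}\cap\mathbf{C}_1$ has $a\geq n$ with $p\mid a$; parametrizing $a=sp/2$ with $s$ even forces $s\geq s_2$, and the resulting bound is $s(sp/2-n+1)$, which is the halved branch of $D_{(2)}(s_2)$, not of $D_{(1)}$. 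Third, $\mathbf{U}\cap\mathbf{C}_1$ never achieves the overall minimum: since $a\geq n$ and $b\geq p$ (or $b\geq p/2$ when $p$ is even), one gets $2ab/p\geq 2n$ (respectively $\geq n$), which always dominates the target minimum; the paper simply checks $\mathbf{U}\cap\mathbf{C}_1=\varnothing$ under $D<H_{\min}$ and moves on. Finally, the relevant appendix optimization is \Cref{AppenH}, which is tailored to the mixed $F$/$G$ minimum with parity constraints on $s$, not \Cref{Appen1}. With these corrections your plan coincides with the paper's proof.
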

	
	\begin{proof}
		By \Cref{D:criterion4M}, it suffices to show that  $\mathbf{R}\cap \mathbf{C}_0=\mathbf{U}\cap\mathbf{C}_1=\mathbf{V}\cap\mathbf{C}_1=\mathbf{Q}\cap \mathbf{N}=\varnothing$ for $D$ less than the given bound.  
		\begin{enumerate}[wide]
			\item Assume that $D\leq 2n-2$. 
			\begin{enumerate}
				\item 
				The set $\mathbf{R}=\varnothing$,  since $2\nmid p$.
				
				\item 
				For $(a,b)\in \mathbf{U}\cap \mathbf{C}_1$, we have $a\geq n$ and $2b\geq 2p=6$ since $p=3$ is odd. Hence $2ab\geq 6n> 6(n-1)\geq pD$, which contradicts the fact that $2ab=pD$.  
				
				\item 
				For $(a,b)\in \mathbf{V}\cap \mathbf{C}_1$, we have $n\leq a<b+n$ and $3|a$. It follows that $a\geq n+2$, since $3|n-1$. Thus 
				$2ab\geq 2a(a-n+1)\geq 6(n+2)>6(n-1)\geq pD$, which also contradicts $2ab=pD$.
				
				\item 
				For $(a,b)\in \mathbf{Q}\cap \mathbf{N}$, we have $a>2n-b$. Note that $a\neq 1$ or $2$; otherwise, $b-a>2n-3$. Thus by $3|b$, $3|2n-2$ and $b= pD/a\leq D\leq 2n-2$, we have
				$$ab\geq b(2n-b+1)\geq 3(2n-2)\geq pD,$$
				where the equality holds if and only if $D=2n-2$ and $(a,b)=(3,2n-2)$ or $(2n-2,3)$.
				However, this contradicts $3\nmid a$.
				So $\mathbf{Q}\cap \mathbf{N}=\varnothing$.
			\end{enumerate}
			
			\item Assume that $D\leq 10$. 
			\begin{enumerate}
				\item 
				We have $\mathbf{R}=\varnothing$ since $2\nmid p$. It follows that $\mathbf{R}\cap\mathbf{C}_0=\varnothing$.
				
				\item Note that
				$$\mathbf{C}_1\cap \{(a,b):p|2ab\}=\{(10,4),(10,5),(10,6),(7,5),(8,5),(9,5),(11,5)\}.$$
				Then the condition $\mathbf{U}\cap\mathbf{C}_1=\mathbf{V}\cap\mathbf{C}_1=\varnothing$ holds,  since $2ab=pD\leq 50$ for $(a,b)\in \mathbf{U}\cup\mathbf{V}$. 
				
				\item For $(a,b)\in \mathbf{Q}\cap\mathbf{N}$, we have $ab=pD\leq 50$.
				So
				$$(a,b)\in \mathbf{N}\cap \{(a,b):ab\leq 50,p|b\}=\{(10,5),(5,10)\}.$$
				Since $p\nmid a$, the condition $\mathbf{Q}\cap\mathbf{N}=\varnothing$ follows.
			\end{enumerate}
			
			\item Assume that $D<\min\{D_{(0)},D_{(1)}(s_1),D_{(2)}(s_2)\}$. 
			
			Apply \Cref{AppenH} to the case with $N=2n-1$. 
			Note that $G(s)<F(s)$ for all $s$, so we have
			$$H_{\min}=\min\{D_{(0)},D_{(1)}(s_1),D_{(2)}(s_2)\},$$
			and hence $D<H_{\min}$. 
			\begin{enumerate}
				\item 
				Assume that  $(a,b)\in\mathbf{R}\cap\mathbf{C}_0$. 
				If $2\nmid p$, the set $\mathbf{R}$ is empty which contradicts. So
				$2|p$ and we have
				\begin{align*}
					2ab&\geq \min\{2ab:p|2b,p\nmid b,1\leq b\leq n-1,a=n-b\}\\
					&=\min\{pG(s):s\leq s_1,2\nmid s\}\geq pH_{\min}>pD.
				\end{align*}
				
			\item 	Assume that  $(a,b)\in\mathbf{U}\cap\mathbf{C}_1$. 
				If $2\nmid p$, then $2ab\geq 2pn\geq pN\geq pH_{\min}>pD$; 
				if $2|p$, then $2ab\geq pn\geq p(N-1)/2\geq pH_{\min}>pD$.
				
			\item	Assume that  $(a,b)\in\mathbf{V}\cap\mathbf{C}_1$. Then
				\begin{align*}
					2ab&\geq \min\{2ab:p|a,a\geq n,b=a-n+1\}\\
					&=\min\{pG(s):s\geq s_2,2|s\}\geq pH_{\min}>pD.
				\end{align*}
				
				These all contradict the equality $2ab=pD$. 
				
				\item Assume that  $(a,b)\in\mathbf{Q}\cap\mathbf{N}$. Then
				$$ab \geq \min\{ab:p|b,a=|2n-1-b|+2\}
				= \min\{pF(s)\}\geq pH_{\min}>pD,$$
				which is absurd. \qedhere
			\end{enumerate}
		\end{enumerate}
	\end{proof}

	\begin{proof}[Proof of \Cref{main2}]	
		By \Cref{thmNk} and \Cref{D:Dp}, it suffices to show that the sum $M_{p,D}$ with $D=D_p$ given is nonzero.
		Then the weights of minimal singular vectors are given by those dominant $\lambda_i$'s in the formula \eqref{M_pD2E}. 
		Recall that the dominant weight in $E^\rho(\cdot)$ can be calculated by \Cref{D:dominant_in_orbit}. 
		
		\begin{enumerate}[wide]
			\item This case follows from \Cref{integrable_case}.
			
			\item Set $D=2n-1$. Recall the formula of $M_{p,D}$ given in \Cref{D:criterion4M}.
			Since $2\nmid p$, 	the set $\mathbf{R}\cap \mathbf{C}_0=\varnothing$. 
			
			For $(a,b)\in \mathbf{U}\cap \mathbf{C}_1$, we have $a\geq n$ and $2b\geq 2p=6$ since $p=3$ is odd, and then $2ab\geq 6n> 3(2n-1)= pD$, which contradicts  $2ab=pD$.  
			For $(a,b)\in \mathbf{V}\cap \mathbf{C}_1$, we have $a\geq n$ and $3|a$. It follows that $a\geq n+2$ since $3|n-1$. And thus 
			$2ab\geq 2a(a-n+1)\geq 6(n+2)>3(2n-1)= pD$, which also contradicts $2ab=pD$.
			So $\mathbf{U}\cap \mathbf{C}_1=\mathbf{V}\cap \mathbf{C}_1=\varnothing$.

			For $(a,b)\in \mathbf{Q}\cap \mathbf{N}$, we have $a>2n-b$. Note that $a\neq 1$ or $2$; otherwise, $b-a>2n-3$. So $b= pD/a\leq D= 2n-1$. 
		By assumption, $3|b$ and $3|2n-2$. It follows that $3\leq b\leq 2n-2$. 
			If $3<b<2n-2$, then
			$ab\geq b(2n-b+1)\geq 6(2n-5)>pD$;
			if $b=2n-2$, then $a=pD/b$ is not an integer;
			if $b=3$, then $a=D$ satisfies all conditions.
			So $\mathbf{Q}\cap \mathbf{N}=\{(D,3)\}$.

			In summary, we have 
			$$M_{p,D}=Y_{\{(D,3)\}}=-Y_{3,D}=E^\rho(-D(\epsilon_1+\epsilon_3)),$$
			since
			\begin{align*}
				R_{3,D}=&\{\alpha=-(\epsilon_i\pm\epsilon_j):1\leq i<j\leq n,\rht(\alpha)=-(2n-4),n\leq \pm(n-j)+3\}\\
				=&\{-(\epsilon_1+\epsilon_3)\}.
			\end{align*}
			
			\item Set $D=11$. We have
			\begin{align*}
				M_{p,D}=&E_{11,5}
				=\sum_{\alpha\in\Delta,\rht(\alpha)=6}E^\rho(-11\alpha)\\
				=&E^\rho(-11(\epsilon_1-\epsilon_7))+\sum_{i=1}^3E^\rho(-11(\epsilon_i+\epsilon_{8-i}))\\
				=&E^\rho(-11(\epsilon_3+\epsilon_5))=E[9,7,6,5,3,1,0]=E^\rho[3,2,2,2,1,0,0]
			\end{align*}
			by \Cref{D:dominant_in_orbit} and \Cref{D:identical_elimination}.
			
			\item Set $D=20$. Then
			$$M_{p,D}=E_{20,5}+E_{10,10}+E_{5,20}+E_{4,25}+E_{2,50}+E_{1,100}.$$
			It is clear that $E_{10,10}=0$ and $E_{20,5}+E_{5,20}=0$ by \Cref{EabEba}. Also note that $E_{2,50}=E_{1,100}=0$, since $|b-a|>2n-3=21$.
			So 
			$$M_{p,D}=E_{4,25}
			=\sum_{\alpha\in\Delta,\rht(\alpha)=-21}E^\rho(-4\alpha)
			=E^\rho(4(\epsilon_1+\epsilon_2)).$$
			
			\item Set $D=2$. Then $M_{p,D}=E_{2,4}+E_{1,8}$. 
			Note that $E_{1,8}=0$ since $|b-a|>2n-3=5$. So
			\begin{align*}
				M_{p,D}=&E_{2,4}
				=\sum_{\alpha\in\Delta,\rht(\alpha)=-2}E^\rho(-2\alpha)\\
				=&E^\rho(2(\epsilon_1-\epsilon_3))+E^\rho(2(\epsilon_2-\epsilon_4))+E^\rho(2(\epsilon_2+\epsilon_4)).
			\end{align*}
			
			\item Set $D=4$.
			Then $M_{p,D}=E_{4,5}+E_{2,10}+E_{1,20}$. 
			Note that $E_{2,10}=E_{1,20}=0$ since $|b-a|>2n-3=5$. So
			\begin{align*}
				M_{p,D}=&E_{4,5}
				=\sum_{\alpha\in\Delta,\rht(\alpha)=-1}E^\rho(-4\alpha)\\
				=&E^\rho(4(\epsilon_1-\epsilon_2))+E^\rho(4(\epsilon_3-\epsilon_4))+E^\rho(4(\epsilon_3+\epsilon_4)).
			\end{align*}
			
			\item Use the notations in \Cref{AppenH} with $N=2n-1$.
			\begin{enumerate}
				\item If $2\nmid p$, then $\mathbf{R}_{p,D}\cap \mathbf{C}_0= \varnothing$ for any $D$. 
				
				If $2| p$, the minimum of $D$ such that $\mathbf{R}_{p,D}\cap \mathbf{C}_0\neq \varnothing$ is 
				\begin{align*}
					&\min\{2ab/p:p|2b,p\nmid b,(a,b)\in \mathbf{C}_0\}\\
					=&\min\{2ab/p:p|2b,p\nmid b,1\leq b\leq n-1,a=n-b\}\\
					=&\min\{2(n-b)b/p:p|2b,p\nmid b,1\leq b\leq n-1\}\\
					=&\min\{(n-ps/2)s:s\leq s_1,2\nmid s\}
					=\min\{G(s):s\leq s_1,2\nmid s\}.
				\end{align*}
				
				\item The minimum of $D$ such that $\mathbf{U}_{p,D}\cap \mathbf{C}_1\neq \varnothing$ is 
				\begin{align*}
					&\min\{2ab/p:p|2b,p\nmid a,(a,b)\in \mathbf{C}_1\}\\
					=&\min\{2ab/p:p|2b,p\nmid a,1\leq b\leq n-1,n\leq a\leq b+n-1\}.
				\end{align*}
				If $2\nmid p$, then this minimum is bigger than $ 2n-1=N$; if $2| p$, then this minimum is bigger than $n-1=(N-1)/2$.
				
				\item 
				The minimum of $D$ such that $\mathbf{V}_{p,D}\cap \mathbf{C}_1\neq \varnothing$ is 
				\begin{align*}
					&\min\{2ab/p:p|a,p\nmid 2b,(a,b)\in \mathbf{C}_1\}\\
					=&\min\{2ab/p:p|a,p\nmid 2b,n\leq a\leq 2n-2,a-n+1\leq b\leq n-1\}.
				\end{align*}
				
				If $2| s_2$ and $G(s_2)\leq G(s_1)$, then the minimum is attained when $a=s_2p/2$ and $b=a-n+1$, and the minimum is $s_2(s_2p/2-n+1)=G(s_2)$. Indeed, if $p|2b$, i.e., $p|2n-2$, then $ps_1=2n-2$. It follows that $s_1=s_2-1$ is odd and $p$ is even, so $G(s_1)=s_1< G(s_2)$, which is absurd.
				
				If $2| s_2$ and $G(s_2)> G(s_1)$, then the minimum is bigger than $G(s_2)-1\geq G(s_1)$.
				
				If $2\nmid s_2$, then the minimum is bigger than $G(s_2+1)-1$. 
				
				\item 
				Let $(a,b)\in\mathbf{Q}_{p,D}\cap\mathbf{N}$ and write
				$b=ps$. Then 
				$$
				D=ab/p=as
				\geq (|N-ps|+2)s
				=F(s)\geq H(s)\geq H_{\min}.
				$$
								This shows that no element of
				$\mathbf{Q}_{p,D}\cap\mathbf{N}$ can occur for
				$D<H_{\min}$.
				
				Suppose that $D=H_{\min}$. Then all the inequalities
				above must be equalities. Hence
				\[
				a=|N-ps|+2,\qquad
				F(s)=H(s)=H_{\min}.
				\]
				By \Cref{AppenH}(1), we have
				$s\in\{1,s_1,s_2\}$. If $s_1>1$, then
				\Cref{AppenH}(3), together with the exceptional cases
				already treated above, gives $F(1)>H_{\min}$.
				If $s_1=1$, then $1=s_1$. Therefore, it remains only
				to consider $s=s_1$ and $s=s_2$.
				
				For $i=1,2$, set $a_i=|N-ps_i|+2, b_i=ps_i$.
				We claim that $p\nmid a_i$ whenever $F(s_i)=H_{\min}$.
				If $s_1=s_2$, then $p\mid N$. Since $N$ is odd,
				$p$ is odd, while $a_1=a_2=2$, so the claim is clear.
				We may therefore assume that $s_2=s_1+1$.
				
				First assume that $H_{\min}=F(s_1)$ and $p\mid a_1$.
				Then $F(s_1)\leq F(s_2)$. Since
				\[
				a_1=N-ps_1+2=2n+1-ps_1
				\]
				and $3\leq a_1\leq p+1$, we must have $a_1=p$.
				Thus $(s_1+1)p=s_2p=2n+1$.
				In particular, $p$ and $s_2$ are odd, and $s_1$ is
				even. Moreover,
				\[
				F(s_1)-F(s_2)
				=s_1p-4s_2
				=(p-4)s_1-4.
				\]
				If $p=3$, then $3\mid n-1$; if $p=5$ and
				$s_1=2$ or $4$, then $n=7$ or $12$, respectively.
				These cases have already been treated separately.
				In all the remaining cases, either $p\geq7$, or
				$p=5$ and $s_1\geq6$, and hence
				$F(s_1)-F(s_2)>0$. This contradicts
				$F(s_1)\leq F(s_2)$.
				
				Next assume that $H_{\min}=F(s_2)$ and $p\mid a_2$.
				Since $a_2=ps_2-N+2=ps_2-2n+3$
				and $3\leq a_2\leq p+1$, we must have $a_2=p$.
				Therefore $s_1p=2n-3$.
				It follows that $p$ and $s_1$ are odd, so $s_2$ is
				even. By the definition of $H$, we then have
				\[
				H(s_2)=G(s_2)<F(s_2)=H_{\min},
				\]
				which is a contradiction.
				
				Finally, if $F(s_i)=H_{\min}$, then
				$b_i=ps_i\geq2$ and hence $(a_i,b_i)\in\mathbf{N}$.
				Moreover, $a_ib_i=pF(s_i)=pH_{\min},
				p\mid b_i, p\nmid a_i$.
				Thus
				\[
				\mathbf{Q}_{p,H_{\min}}\cap\mathbf{N}
				=
				\left\{
				\bigl(|N-ps_i|+2,ps_i\bigr):
				i\in\{1,2\},\ F(s_i)=H_{\min}
				\right\}.
				\]
				
				Assume that  $p|s_2p-2n+3$ and $F(s_1)\geq F(s_2)$. 
				Then $s_1p=2n-3$. It follows that
				$$F(s_2)-F(s_1)=ps_2-4s_1=(p-4)(s_1+1)+4>0,$$
				which also leads to a contradiction.
				
			\end{enumerate}
			
			In summary, we have obtained that the minimum of $D$ such that $M_{p,D}\neq 0$ is $H_{\min}$ by \Cref{D:criterion4M}.
			Then $H_{\min}=\min\{D_{(0)},D_{(1)}(s_1),D_{(2)}(s_2)\}$ by  \Cref{AppenH}. So $D_p=\min\{D_{(0)},D_{(1)}(s_1),D_{(2)}(s_2)\}$.

			Set $D=D_p$. As shown above, we have:
			\begin{enumerate}
				\item The set $\mathbf{R}\cap \mathbf{C}_0\neq \varnothing$ if and only if one of the following cases occurs: 
				\begin{enumerate}
					\item $2|p$ and $D_p=D_{(0)}=G(1)$;
					\item $2|p$, $2\nmid s_1$ and $D_p=D_{(1)}(s_1)=G(s_1)$.
				\end{enumerate}
				
				If only the first case occurs, the set
				$\mathbf{R}\cap \mathbf{C}_0=\{(n-\frac{p}{2},\frac{p}{2})\}$. 
				Then 
				\begin{align*}
					\widetilde{X}_{\mathbf{R}\cap \mathbf{C}_0}
					=&\widetilde{X}_{n}\\
					=&E([n,n-1,n-2,\dots,\widehat{|p-n|},\dots,1,0])\\
					=&
					\begin{cases}
						E^\rho(\epsilon_1+\epsilon_2+\cdots+\epsilon_{p}),& p<n,\\
						E^\rho(\epsilon_1+\epsilon_2+\cdots+\epsilon_{2n-p}), &p>n.
					\end{cases}
				\end{align*}
				Note that $p\neq n$. Otherwise, we have $s_2=2$ and hence $D_{(2)}(s_2)=2<p/2=D_{(0)}$, which contradicts.
				
				If only the second case occurs, the set
				$\mathbf{R}\cap \mathbf{C}_0=\{(n-\frac{s_1p}{2},\frac{s_1p}{2})\}$. 
				Then 
				\begin{align*}
					\widetilde{X}_{\mathbf{R}\cap \mathbf{C}_0}
					=&\widetilde{X}_{n}\\
					=&E([n,n-1,n-2,\dots,\widehat{s_1p-n},\dots,1,0])\\
					=&E^\rho(\epsilon_1+\epsilon_2+\cdots+\epsilon_{2n-s_1p}).
				\end{align*}
				
				If both cases happen, then the set
				$\mathbf{R}\cap \mathbf{C}_0=\{(n-\frac{p}{2},\frac{p}{2})\}\cup \{(n-\frac{s_1p}{2},\frac{s_1p}{2})\}$. 
				
				\item The set $\mathbf{U}\cap \mathbf{C}_1= \varnothing$ when $D=D_p$.
				
				\item The set $\mathbf{V}\cap \mathbf{C}_1\neq  \varnothing$ if and only if $2|s_2$ and $D_p=D_{(2)}=G(s_2)$. 
				In this case, the set $\mathbf{V}\cap \mathbf{C}_1=\{(\frac{s_2p}{2},\frac{s_2p}{2}-n+1)\}$.
				Then 
				\begin{align*}
					\widetilde{X}_{\mathbf{V}\cap \mathbf{C}_1}
					=&\widetilde{X}_{s_2p-n+1}\\
					=&-E([s_2p-n+1,n-2,\dots,1,0])\\
					=&-E^\rho((s_2p-2n+2)\epsilon_1).
				\end{align*}
				
				\item The set $\mathbf{Q}\cap \mathbf{N}\neq \varnothing$ if and only if at least  one of the following cases occurs.
				\begin{enumerate}
					\item $2| s_1(p-1)$ and $D_p=D_{(1)}=F(s_1)$;
					\item $2\nmid s_2$ and $D_p=D_{(2)}=F(s_2)$.
				\end{enumerate}
				
				If only the first case occurs, the set
				$\mathbf{Q}\cap \mathbf{N}=\{(2n+1-ps_1,ps_1)\}$. 
				Then 
				\begin{align*}
					Y_{\mathbf{Q}\cap\mathbf{N}}
					&=Y_{2n+1-ps_1,ps_1}\\
					&=E^\rho\bigl(
					(2n+1-ps_1)
					(\epsilon_{2n-ps_1}+\epsilon_{2n-ps_1+1})
					\bigr)\\
					&=E^\rho\bigl(
					2(\epsilon_1+\cdots+\epsilon_{2n-ps_1+1})
					\bigr).
				\end{align*}
				
				If only the second case occurs, the set
				$\mathbf{Q}\cap \mathbf{N}=\{(ps_2-2n+3,ps_2)\}$. 
				Then 
				\begin{align*}
					Y_{\mathbf{Q}\cap\mathbf{N}}
					&=Y_{ps_2-2n+3,ps_2}\\
					&=E^\rho\bigl(
					(ps_2-2n+3)(\epsilon_1+\epsilon_2)
					\bigr).
				\end{align*}
				
				If both cases happen, then the set
				$\mathbf{Q}\cap \mathbf{N}$ is the union of two cases. 
			\end{enumerate}
		\end{enumerate}
		Finally, we focus  on the number of linearly independent minimal singular vectors. From the discussion above, there exist some coefficient $a_i$ in $M_{p,D_p}$ \eqref{M_pD2E} such that $a_i\geq 2$ if and only if $p=2$ and $2|n$. In this case, we have $D_p=D_{(0)}=D_{(1)}=n-1$, and
		$M_{p,D_p}=2E^\rho(\epsilon_1+\epsilon_2)$.
	\end{proof}

	\subsection{Type $\mathsf{E}$}
	
	From \cite{Humphreys} (see also \cite{tauvel2005lie}), the root system of type $\mathsf{E}$ can be realized in $\mathbb{R}^8$ as follows: 
	\begin{itemize}
		\item $\mathsf{E}_6$: 
		The simple  roots are 
		\begin{align*}
			&\alpha_1=\tfrac12 (\epsilon_1 - \epsilon_2 - \epsilon_3 - \epsilon_4 - \epsilon_5 - \epsilon_6 - \epsilon_7 + \epsilon_8),  ~ \alpha_2=\epsilon_1 + \epsilon_2,\\
			&\alpha_3= \epsilon_2 - \epsilon_1, ~\alpha_4= \epsilon_3 - \epsilon_2, ~ \alpha_5=\epsilon_4 - \epsilon_3, ~ \alpha_6=\epsilon_5 - \epsilon_4.
		\end{align*}
		The highest root is $\theta=\alpha_1+2\alpha_2+2\alpha_3+3\alpha_4+2\alpha_5+\alpha_6$.
		
		\item $\mathsf{E}_7$: 
		The simple roots are those of $\mathsf{E}_6$ together with $\alpha_7=\epsilon_6 - \epsilon_5$.
		
		The highest root is $\theta=2\alpha_1+2\alpha_2+3\alpha_3+4\alpha_4+3\alpha_5+2\alpha_6+\alpha_7$.
		
		\item $\mathsf{E}_8$: 
		The simple roots are those of $\mathsf{E}_7$ together with $\alpha_8=\epsilon_7 - \epsilon_6$.
		
		The highest root is $\theta=2\alpha_1+3\alpha_2+4\alpha_3+6\alpha_4+5\alpha_5+4\alpha_6+3\alpha_7+2\alpha_8$.
	\end{itemize}
	
	Recall that $\mathbf{h}^\vee=12$ for type $\mathsf{E}_6$, $\mathbf{h}^\vee=18$ for type $\mathsf{E}_7$, and  $\mathbf{h}^\vee=30$ for type $\mathsf{E}_8$.
	The proof of \Cref{singE} is omitted since it is similar to that for type $\mathsf{A}$ or $\mathsf{D}$ and it suffices to calculate $D_p$ and  $\lambda_{sing}$ 
	case by case. We also verify the data in \Cref{tab:Esing} by an exact computer calculation. 
	
\section{Associated Weyl elements  in Kashiwara-Tanisaki character formulas}
	
	\subsection{Kashiwara-Tanisaki character formulas}
	
	For $\lambda\in\widehat{\kh}^*$, set 
	$$
	\widehat{\Delta}(\lambda)=\{\alpha\in\widehat{\Delta}^{re}:(\lambda+\widehat{\rho}|\alpha^{\vee})\in\mathbb{Z}\}, 
	$$
	$$
	\widehat{\Delta}_0(\lambda)=\{\alpha\in\widehat{\Delta}^{re}:(\lambda+\widehat{\rho}|\alpha^{\vee})=0\}.
	$$
	Notice that $\widehat{\Delta}(\lambda)$ and $\widehat{\Delta}_0(\lambda)$ are subsystems of $\widehat{\Delta}^{re}$. Denote the set  of positive roots, the set of negative roots, the set of simple roots, the set of simple reflections  and the Weyl group for  $\widehat{\Delta}(\lambda)$ by
	$\widehat{\Delta}^+(\lambda)$,  $\widehat{\Delta}^-(\lambda)$, $\widehat{\Pi}(\lambda)$, $\widehat{S}(\lambda)$ and $\widehat{W}(\lambda)$, respectively. Denote those for $\widehat{\Delta}_0(\lambda)$ by  $\widehat{\Delta}^{+}_0(\lambda)$, $\widehat{\Delta}^{-}_0(\lambda)$, $\widehat{\Pi}_0(\lambda)$, $\widehat{S}_0(\lambda)$and $\widehat{W}_0(\lambda)$.
	
	For a real root $\alpha\in\widehat{\Delta}$,  denote by $s_{\alpha}\in\widehat{W}$ the corresponding reflection. Then  $\widehat{\Pi}(\lambda)$ is  the set of $\alpha\in\widehat{\Delta}^+(\lambda)$ such that $s_{\alpha}(\widehat{\Delta}^+(\lambda)\backslash \{\alpha\})=\widehat{\Delta}^+(\lambda)\backslash \{\alpha\}$, and  $(\widehat{W}(\lambda), S(\lambda))$ is a Coxeter group \cite{KW89,KT00}. 
	
	\vskip 0.2cm
	For $w\in \widehat{W}(\lambda)$, denote by $\ell_{\lambda}(w)$ the length of $w$. Denote the Bruhat ordering of $\widehat{W}(\lambda)$ by $\geq_{\lambda}$. For $y,w\in \widehat{W}(\lambda)$, denote by $P^{\lambda}_{y,w}(q)\in\mathbb{Z}[q]$ the associated Kazhdan-Lusztig polynomial  \cite{KL79}, and by $Q^{\lambda}_{y,w}(q)\in\mathbb{Z}[q]$ the inverse Kazhdan-Lusztig polynomial defined by 
	$$
	\sum\limits_{x\leq_{\lambda}y\leq_{\lambda}z}(-1)^{\ell_{\lambda}(y)-\ell_{\lambda}(x)}Q^{\lambda}_{x,y}(q)P^{\lambda}_{y,z}(q)=\delta_{x,z},
	$$
	for any $x,z\in \widehat{W}(\lambda)$.  Set
	\begin{align*}
		{\mathcal C}&=\{\lambda\in\widehat{\kh}^*: (\delta|\lambda+\widehat{\rho})\neq 0 \}, \\
		{\mathcal C}^+&=\{\lambda\in{\mathcal C}:(\lambda+\widehat{\rho}|\alpha^{\vee})\geq 0, \  {\rm for \ any} \ \alpha\in\widehat{\Delta}^{+}(\lambda)\}, \\
		{\mathcal C}^-&=\{\lambda\in{\mathcal C}:(\lambda+\widehat{\rho}|\alpha^{\vee})\leq 0, \  {\rm for \ any} \ \alpha\in\widehat{\Delta}^{+}(\lambda)\}.
	\end{align*}
	Then $\widehat{W}_0(\lambda)$ is a finite group for any $\lambda\in{\mathcal C}$ \cite{KT00}. 
	
	For $\lambda\in\widehat{\kh}^*$, let $M(\lambda)$ (resp. $L(\lambda)$) be the Verma module (resp. the simple quotient) of $\widehat{\kg}$  with highest weight $\lambda$. 
	The following lemma comes from \cite{KK79,DGK82,Ku87,KW89}.
	\begin{lemma}\label{lweyl3} Let $\lambda\in{\mathcal C}^+$, and $w\in\widehat{W}(\lambda)$. If $L(\mu)$ is a subquotient of $M(w\circ \lambda)$, then there exists $y\in \widehat{W}(\lambda)$ such that $\mu=(yw)\circ \lambda$, and $w\leq_{\lambda}yw$.
	\end{lemma}	
	We have the following character formulas  from \cite{KT00}.
	\begin{theorem}\label{KT}
		\begin{enumerate}
			\item 	Let $\lambda\in{\mathcal C}^+$,  then for any $w\in\widehat{W}(\lambda)$ which is the longest element of $w\widehat{W}_0(\lambda)$, 
			$$
			\operatorname{ch}(L(w\circ \lambda))=\sum\limits_{w\leq_{\lambda}y\in \widehat{W}(\lambda)}(-1)^{\ell_{\lambda}(y)-\ell_{\lambda}(w)}Q^{\lambda}_{w,y}(1)\operatorname{ch}(M(y\circ \lambda)).
			$$
			\item  	Let $\lambda\in{\mathcal C}^-$,  then for any $w\in\widehat{W}(\lambda)$ which is the shortest element of $w\widehat{W}_0(\lambda)$, 
			$$
			\operatorname{ch}(L(w\circ \lambda))=\sum\limits_{w\geq_{\lambda}y\in \widehat{W}(\lambda)}(-1)^{\ell_{\lambda}(w)-\ell_{\lambda}(y)}P^{\lambda}_{y,w}(1)\operatorname{ch}(M(y\circ \lambda)).
			$$
		\end{enumerate}  
	\end{theorem}
	\subsection{Equivalence of categories}
	
	The following theorem comes from \cite{fiebig2006combinatorics}.
	\begin{theorem}\label{Fi06}
		Let ${\widehat{\kh}}\subset{\widehat{\mathfrak{b} }}\subset{\widehat{\kg}}$ and ${\widehat{\kh}}'\subset{\widehat{\mathfrak{b} }}'\subset{\widehat{\kg}}'$ be symmetrizable Kac-Moody Lie algebras with Cartan and Borel subalgebras, respectively. Let  $\Lambda\in\widehat{\kh}^*/\sim$ and $\Lambda'\in(\widehat{\kh}')^*/\sim'$ be two equivalence classes outside the critical hyperplanes and let ${\cal O}_{\Lambda}$ and ${\cal O}'_{\Lambda'}$ be the corresponding indecomposable blocks, where $\sim$ and $\sim'$ are the usual relation in $\widehat{\kh}^*$ and $(\widehat{\kh}')^*$, respectively (see \cite{KK79}). Suppose the following conditions hold:
		\begin{enumerate}
			\item \ There exist $\lambda\in\Lambda$ and $\lambda'\in\Lambda'$ which are either both dominant or both antidominant; 
			
			\item $(W(\lambda), S(\lambda))\cong (W(\lambda'), S(\lambda'))$ as Coxeter groups;
			 
			\item $W_0(\lambda)\cong W_{0}(\lambda')$ under the same isomorphism and both sets are finite.
		\end{enumerate}
		Then there exists an equivalence of categories
		$$
		{\cal O}_{\Lambda}\cong {\cal O'}_{\Lambda'}.
		$$
	\end{theorem}
\subsection{Criteria to determine  longest Weyl elements}	
	We now assume that $\mathfrak{g}$ is  of simply-laced type.  Let $\kappa_{p,q}=-\mathbf{h}^\vee+\frac{p}{q}$  such that  $q\geq 1$, $(p,q)=1$, and $p\geq 2$. By \cite{DGK82}, there exist $\Lambda^{(p,q)}\in{\mathcal C}^+$  and $\omega\in \widehat{W}(\Lambda^{(p,q)})$ 
	such that 
	$$
	\kappa_{p,q}\Lambda_0=\omega\circ \Lambda^{(p,q)}.
	$$
	Also there exists $\tilde{\omega}\in \widehat{W}(\Lambda^{(p,q)})$ such that 
	$$
\Lambda^{(p,q)}_{sing}=	\tilde{\omega}\circ \Lambda^{(p,q)},
	$$
where 	$\Lambda^{(p,q)}_{sing}$ denotes  a weight of minimal singular vectors given in previous sections.  

In the rest of the paper, we shall give the longest Weyl elements $\tilde{y}$ and $\tilde{z}$ in $\widehat{W}$ satisfying that
$$\tilde{y}\circ(\Lambda^{(p,q)} )=\kappa_{p,q}\Lambda_0,~~~
\tilde{z}\circ{\Lambda^{(p,q)}}=\Lambda_{sing}^{(p,q)}.$$
	Let $\{\alpha_1,\dots,\alpha_\ell\}$ be the simple root system of $\kg$, and $\{\alpha_0,\alpha_1,\dots,\alpha_\ell\}$  the simple root system of $\widehat{\kg}$.
	It is obvious that 
	$$
	\widehat{\Pi}(\Lambda^{(p,q)})=\widehat{\Pi}(\kappa_{p,q}\Lambda_0)=\{\beta_0=q\delta-\theta, \alpha_1, \cdots, \alpha_\ell\},
	$$
	and $$ \widehat{W}(\Lambda^{(p,q)})=\widehat{W}(\kappa_{p,q}\Lambda_0), \ \ 
	\widehat{W}(\Lambda^{(p,1)})=\widehat{W}.$$
	Then as shown in \cite{JS25},  we have 
	\begin{lemma}\label{lemma-iso}
		Let $\mathfrak{g}$ be of simply-laced type. 
		Then for  any $\kappa_{p,q}=-\mathbf{h}^\vee+\frac{p}{q}$ with $q\geq 1$, $p\geq 2$  and $(p,q)=1$, we have
		$$
		(\widehat{W}(\Lambda^{(p,q)}), S(\Lambda^{(p,q)}))\cong (\widehat{W}, S)
		$$
		and 
		$$
		(\widehat{W}_0(\Lambda^{(p,q)}), S_0(\Lambda^{(p,q)}))\cong (\widehat{W}_0(\Lambda^{(p,1)}), S_0(\Lambda^{(p,1)})),
		$$
		$$
		(\widehat{W}_0(\kappa_{p,q}\Lambda_0), S_0(\kappa_{p,q}\Lambda_0))\cong (\widehat{W}_0(\kappa_{p,1}\Lambda_0), S_0(\kappa_{p,1}\Lambda_0)).
		$$
		as Coxeter groups via the morphism $\sigma: (\widehat{W}, S)\to 	(\widehat{W}(\Lambda^{(p,q)}), S(\Lambda^{(p,q)}))
		$
		defined by
		$$\sigma (s_{\alpha_0})=s_{\beta_0}, ~~~
		\sigma (s_{\alpha_i})=s_{\alpha_i}, ~ i=1,2,\cdots,\ell.	
		$$
	\end{lemma}
	By  \Cref{Fi06} and \Cref{lemma-iso}, we have
	$$
	{\cal O}_{\Lambda^{(p,1)}}\cong {\cal O}_{\Lambda^{(p,q)}}.
	$$
	Therefore, we can reduce the problem to the case $q=1$.  In the following, we simply denote 	
	$$\kappa=\kappa_{p,1}, \ \  \Lambda=\Lambda^{(p,1)}.$$
	We  may write
	$$\Lambda=  \kappa\Lambda_0+D\delta+\overline{\Lambda},$$
	where $D\in\mathbb{Z}$ and $\overline{\Lambda}\in\kh^*$.

	The following lemma would assist us to check if a weight is in the shifted-orbit of $\kappa\Lambda_0$ by its restriction on $\kh^*+\mathbb{C}\Lambda_0$. 
	\begin{lemma}\label{orbitmoddelta}
		Let $\kappa\in\mathbb{C}$ satisfy $\kappa\neq-\mathbf{h}^\vee$.
		We have
		$$\widehat{W}\circ(\kappa\Lambda_0)\equiv W\circ(0)+(\kappa+\mathbf{h}^\vee)Q
		\equiv W\circ((\kappa+\mathbf{h}^\vee)Q) \pmod{\mathbb{C}\delta}.$$
		Moreover, for $\lambda\in\kh^*$, we have
		\begin{enumerate}
			\item if $w\circ(\lambda)=-(\kappa+\mathbf{h}^\vee)\gamma$ for some $w\in W,\gamma\in Q$, then 
			$$\kappa\Lambda_0+D\delta+\lambda=(t_\gamma w)^{-1}\circ (\kappa\Lambda_0)\in \widehat{W}\circ(\kappa\Lambda_0);$$
			\item if $\lambda=w\circ((\kappa+\mathbf{h}^\vee)\gamma)$ for some $w\in W,\gamma\in Q$, then 
			$$\kappa\Lambda_0+D\delta+\lambda=(wt_\gamma)\circ (\kappa\Lambda_0)\in \widehat{W}\circ(\kappa\Lambda_0),$$
		\end{enumerate}
		where 
		\begin{equation}\label{adde1}
		D=\frac{1}{2(\kappa+\mathbf{h}^\vee)}(\|\rho\|^2-\|\lambda+\rho\|^2).
		\end{equation}
		
	\end{lemma}
	\begin{proof}
		The lemma  follows from $\widehat{W}=T\rtimes W$, $t_{w(\alpha)}=wt_\alpha w^{-1}$ and the following deduction:
		\begin{align*}
			(wt_\gamma)\circ (\kappa\Lambda_0)-(\kappa\Lambda_0+\lambda)
			=&-((\rho|\gamma)+\frac{1}{2}(\kappa+\mathbf{h}^\vee)(\gamma|\gamma))\delta+w(\rho)-\rho+(\kappa+\mathbf{h}^\vee)w(\gamma)-\lambda\\
			=&-((\rho|\gamma)+\frac{1}{2}(\kappa+\mathbf{h}^\vee)(\gamma|\gamma))\delta,
		\end{align*}
		\begin{align*}
			-((\rho|\gamma)+\frac{1}{2}(\kappa+\mathbf{h}^\vee)(\gamma|\gamma))
			=&-((\rho|\gamma)+\frac{1}{2}(w^{-1}(\lambda+\rho)-\rho|\gamma))
			=-\frac{1}{2}(w^{-1}(\lambda+\rho)+\rho|\gamma)\\
			=&-\frac{1}{2(\kappa+\mathbf{h}^\vee)}(w^{-1}(\lambda+\rho)+\rho|w^{-1}(\lambda+\rho)-\rho)\\
			=&-\frac{1}{2(\kappa+\mathbf{h}^\vee)}(\|w^{-1}(\lambda+\rho)\|^2-\|\rho\|^2)=D.
		\end{align*}
	\end{proof}
	
	Define the following subset of roots: 
	\begin{align*}
		\widehat{\Delta}_p^+:=&\{m\delta-\overline{\alpha}\in \widehat{\Delta}_{re}^+: \rht(\overline{\alpha})=mp,~ \overline{\alpha}\in \Delta\}.
	\end{align*}
	
	\begin{lemma}\label{reflectionsinW0}
		The group $\widehat{W}_0(\kappa\Lambda_0)$ is generated by
		the reflections $s_\alpha$ with $\alpha\in\widehat{\Delta}_p^+$.
	\end{lemma}
	\begin{proof}
		The group $\widehat{W}_0(\kappa\Lambda_0)$ is generated by the reflections contained in it. For $\alpha=m\delta-\overline{\alpha}\in \widehat{\Delta}^{re,+}$, the reflection $s_\alpha$ is in the group $\widehat{W}_0(\kappa\Lambda_0)$ if and only if 
		$$0=(\alpha|\kappa\Lambda_0+\hat{\rho})=m(\kappa+\mathbf{h}^\vee)-(\overline{\alpha}|\rho)=mp-\rht(\overline{\alpha}).$$
	\end{proof}

	\begin{lemma}\label{criteriononlongesty}
		\begin{enumerate}
			\item Let $y\in \widehat{W}$ be such that $y\circ \Lambda=\kappa\Lambda_0$. Then the left coset 
			$$y\widehat{W}_0(\Lambda)=\{t_\gamma w: w\circ(\overline{\Lambda})=-p\gamma,~\gamma\in Q,~w\in W\}.$$
			\item The longest element $\widetilde{y}$ in $y\widehat{W}_0(\Lambda)$ is the one satisfying that for all $\alpha\in \widehat{\Delta}_p^+$,
			$$\widetilde{y}^{-1}(\alpha)<0,$$
			or equivalently, writing $\widetilde{y}=t_\gamma w$,  for all $\alpha=m\delta-\overline{\alpha}\in \widehat{\Delta}_p^+$,
			\begin{equation}\label{gammaandwforlongesty}
				((\overline{\alpha}|\gamma)-m)\delta+w^{-1}(\overline{\alpha})>0.
			\end{equation}
		\end{enumerate}
	\end{lemma}
	
	
	\begin{proof}
		\begin{enumerate}[wide]
			\item This follows from \Cref{orbitmoddelta}.
			\item It is clear that  $y\widehat{W}_0(\Lambda)=\widehat{W}_0(\kappa\Lambda_0)y$. 
			So by \Cref{reflectionsinW0}, the longest element $\widetilde{y}$ in $y\widehat{W}_0(\Lambda)$ satisfies that $\ell(s_\alpha \widetilde{y})<\ell (\widetilde{y})$ for any $\alpha \in \widehat{\Delta}_{p}^+$, that is, $\widetilde{y}^{-1}(\alpha)<0$. The equivalent condition \eqref{gammaandwforlongesty} is obtained by the following calculation:
			$$\widetilde{y}^{-1}(\alpha)=w^{-1}t_{-\gamma}(m\delta-\overline{\alpha})=w^{-1}(m\delta-\overline{\alpha}-(\overline{\alpha}|\gamma)\delta)=(m-(\overline{\alpha}|\gamma))\delta-w^{-1}(\overline{\alpha}).$$
		\end{enumerate}
	\end{proof}

	\begin{lemma}\label{criteriononlongestz}
		\begin{enumerate}
			\item Let $z_0\in \widehat{W}$ be such that $z_0\circ (\kappa\Lambda_0)=\Lambda_{sing}$, and $z\in \widehat{W}$  such that $z\circ (\Lambda)=\Lambda_{sing}$. Then the left coset 
			$$z_0\widehat{W}_0(\kappa\Lambda_0)=\{wt_\gamma : w\circ(p\gamma)=\overline{\Lambda_{sing}},~\gamma\in Q,~w\in W\}.$$
			\item Let $y\in\widehat W$ satisfy
			$y\circ\Lambda=\kappa\Lambda_0$, and let $\widetilde y$
			be the longest element of the left coset
			$y\widehat W_0(\Lambda)$. Then 
			the longest element $\widetilde{z}$ in $z\widehat{W}_0(\Lambda)$ is $\widetilde{z_0}\widetilde{y}$, where $\widetilde{z_0}\in z_0\widehat{W}_0(\kappa\Lambda_0)$ is the one satisfying that for any $\alpha \in \widehat{\Delta}_{p}^+$,
			$$\widetilde{z_0}(\alpha)>0,$$
			or equivalently, writing $\widetilde{z_0}=wt_{\gamma}$,  for all $\alpha=m\delta-\overline{\alpha}\in \widehat{\Delta}_p^+$,
			\begin{equation}\label{gammaandwforlongestz}
				((\overline{\alpha}|\gamma)+m)\delta-w(\overline{\alpha})> 0.
			\end{equation}
		\end{enumerate}
	\end{lemma}
	
	\begin{proof}
		The proof is analogous to that of \Cref{criteriononlongesty}.
		Here we explain the condition $\widetilde{z_0}(\alpha)>0$ for $\alpha \in \widehat{\Delta}_{p}^+$.
		Since $\tilde{y}^{-1}\widehat{W}_0(\kappa\Lambda_0) \tilde{y}=\widehat{W}_0(\Lambda)$, the reflections in $\widehat{W}_0(\Lambda)$ are those $\tilde{y}^{-1}s_\alpha \tilde{y}=s_{\tilde{y}^{-1}\alpha}$ for all $\alpha\in\widehat{\Delta}_p^+$.
		Recall that $\widetilde{y}$ satisfies that $\widetilde{y}^{-1}(\alpha)<0$ for all $\alpha\in\widehat{\Delta}_p^+$. 
		Then for $\beta\in\widehat{\Delta}^+$ such that $s_\beta\in \widehat{W}_0(\Lambda)$, we have $\beta=-\widetilde{y}^{-1}(\alpha)$ for some  $\alpha\in \widehat{\Delta}_p^+$, and hence
		$\widetilde{z_0}\widetilde{y}(\beta)=-\widetilde{z_0}(\alpha)<0$, which is equivalent to $\ell(\widetilde{z_0}\widetilde{y}s_\beta)<\ell(\widetilde{z_0}\widetilde{y})$.
	\end{proof}
	
	Define 
	$$
	{\Delta}_p^+:=\{\overline{\alpha}\in {\Delta}^+: \rht(\overline{\alpha})=p\}.
	$$
	
	\begin{coro}\label{gammaandwforlongestyandzm1}
		Let $\kg$ be of type $\mathsf{A}$ or $\mathsf{D}$. The element $\widetilde{y}=t_\gamma w\in \widehat{W}$ in  \Cref{criteriononlongesty} is the one satisfying that
		\begin{enumerate}
			\item $w\circ(\overline{\Lambda})=-p\gamma$;
			\item $((\overline{\alpha}|\gamma)-1)\delta+w^{-1}(\overline{\alpha})>0$ for all $\overline{\alpha}\in {\Delta}_p^+$;
			\item if $\kg$ is of type $\mathsf{D}_n$, $2|p$ and $n>\frac{3}{2}p$, then $((\overline{\alpha}|\gamma)-2)\delta+w^{-1}(\overline{\alpha})>0$ holds for $\overline{\alpha}=\epsilon_{n-\frac{3}{2}p}+\epsilon_{n-\frac{p}{2}}$.
		\end{enumerate} 
		The element $\widetilde{z}$ in  \Cref{criteriononlongestz} is $\widetilde{z_0}\widetilde{y}$, where $\widetilde{z_0}=w t_{\gamma}\in \widehat{W}$ is the one satisfying that
		\begin{enumerate}
			\item $w\circ(p\gamma)=\overline{\Lambda_{sing}}$;
			\item $((\overline{\alpha}|\gamma)+1)\delta-w(\overline{\alpha})>0$ for all $\overline{\alpha}\in {\Delta}_p^+$;
			\item if $\kg$ is of type $\mathsf{D}_n$, $2|p$ and $n>\frac{3}{2}p$, then $((\overline{\alpha}|\gamma)+2)\delta-w(\overline{\alpha})>0$ holds for $\overline{\alpha}=\epsilon_{n-\frac{3}{2}p}+\epsilon_{n-\frac{p}{2}}$.
		\end{enumerate} 
	\end{coro}
	
	\begin{proof}
		For type $\mathsf{A}$, any root with height $mp$ can be decomposed as a sum of $m$ roots with height $p$, which can be seen from the standard realization of the root system. 
		So we can verify the condition \eqref{gammaandwforlongesty} and \eqref{gammaandwforlongestz} only for $m=1$ from the linearity. 
		
		For type $\mathsf{D}$, a positive root is either $\epsilon_i-\epsilon_j$ or $\epsilon_i+\epsilon_j$.
		The former obviously decomposes when its height is a multiple of $p$.
		For $\epsilon_i+\epsilon_j$ of height $mp$, one can repeatedly subtract roots of the form $\epsilon_a-\epsilon_{a+p}$ (height $p$) to reduce the height by $p$, until either a single root of height $p$ remains or the process is blocked. 
		The only indecomposable case occurs when $2| p$, $n>\frac{3}{2}p$, and the root equals $\epsilon_{n-\frac{3}{2}p}+\epsilon_{n-\frac{p}{2}}$ (height $2p$). 
		This root is exactly covered by the additional condition in the statement.
	\end{proof}
	
	\section{Characterization of Weyl elements $\tilde{y}$ and $\tilde{z}_0$ }
	\subsection{Type $\mathsf{A}$}
	
	Recall that $W\cong \mathfrak{S}_n$. We shall use usual notations for  symmetric groups. The notation $(a_1~a_2~\cdots~ a_m)$ stands for the cycle on $a_1,a_2\dots,a_m$, and 
	$$\binom{a_1~a_2~\cdots ~a_n}{b_1~b_2~\cdots~b_n}$$
	stands for the permutation such that $a_i\mapsto b_i$, $1\leq i\leq n$. 
	
	For any $i\in\mathbb{Z}$, denote the increasing (resp. decreasing) sequence of those $x$'s between $1$ and $n$ with $x\equiv i\pmod p$ by $\overline{i}$ (resp. $\overline{i}^\downarrow$), that is, 
	\begin{multline*}
		\overline{i}=i-Np,\dots,i-2p,i-p,i,i+p,i+2p,\dots,i+Mp \\
		(\text{resp. }\overline{i}^\downarrow=i+Mp,\dots,i+p,i,i-p,\dots,i-Np),
	\end{multline*}
	where $1\in (i-(N+1)p,i-Np]$ and $n\in [i+Mp,i+(M+1)p)$.
	Also, denote by $\overline{i}_\rightarrow$ the increasing sequence of those $x$'s between $n/2$ and $n$ with $x\equiv i\pmod p$, and $\overline{i}_\leftarrow$ the increasing sequence of those $x$'s between $1$ and $n/2$ with $x\equiv i\pmod p$. 
	In addition, we shall use the notation $\overline{\epsilon_i}$ for the sum of all  $\epsilon_\bullet$'s with index in $\overline{i}$.
	We denote by $|A|$ the number of elements in $A$.
	
	\begin{lemma}
		Assume that $\kg$ is of type $\mathsf{A}_{n-1}$.  
		\begin{enumerate}[wide]
			\item If $2|n-p$, the weight $\overline{\Lambda}+\rho$ is 
			$$[\underset{|\overline{(n-p)/2+1}|}{\underbrace{\tfrac{p+1}{2}-1,\dots,\tfrac{p+1}{2}-1}},
			\underset{|\overline{(n-p)/2+2}|}{\underbrace{\tfrac{p+1}{2}-2,\dots,\tfrac{p+1}{2}-2}},
			\dots,
			\underset{|\overline{(n-p)/2+p}|}{\underbrace{\tfrac{p+1}{2}-p,\dots,\tfrac{p+1}{2}-p}}].$$
			\item If $2\nmid n-p$, the weight $\overline{\Lambda}+\rho$ is 
			$$[\underset{|\overline{(n-p+1)/2}|/2}{\underbrace{\tfrac{p}{2},\dots,\tfrac{p}{2}}},
			\underset{|\overline{(n-p+1)/2+1}|}{\underbrace{\tfrac{p}{2}-1,\dots,\tfrac{p}{2}-1}},
			\underset{|\overline{(n-p+1)/2+2}|}{\underbrace{\tfrac{p}{2}-2,\dots,\tfrac{p}{2}-2}},
			\dots,
			\underset{|\overline{(n-p+1)/2+p-1}|}{\underbrace{\tfrac{p}{2}-(p-1),\dots,\tfrac{p}{2}-(p-1)}},
			\underset{|\overline{(n-p+1)/2}|/2}{\underbrace{-\tfrac{p}{2},\dots,-\tfrac{p}{2}}}].$$
		\end{enumerate}
	\end{lemma}
	
	\begin{proof}
		With $\gamma$ and $w$ given in \Cref{Alongesty}, one can verify  by \Cref{orbitmoddelta} that 
		the weight $\Lambda=\kappa\Lambda_0+D\delta+\overline{\Lambda}$ is in the shifted-orbit of $\kappa\Lambda_0$ for some $D$.
		Then it suffices to show that 
		$$(\kappa\Lambda_0+D\delta+\overline{\Lambda}+\widehat{\rho}|\alpha)=(p\Lambda_0+\overline{\Lambda}+\rho|\alpha)\geq 0$$ 
		for any $\alpha\in\widehat{\Pi}$, since $\widehat{\Delta}(\kappa\Lambda_0)=\widehat{\Delta}$.
		It can be checked directly that the condition holds for $\alpha=\alpha_1,\dots,\alpha_{n-1}$. For $\alpha=\alpha_0=\delta-\theta$, the condition turns out to be 
		$(\overline{\Lambda}+\rho|\theta)\leq p$, 
		which also holds since $\theta=\epsilon_1-\epsilon_n$.
	\end{proof}
	
	Given $n$ and $p$, 
	we have $\Delta_p^+=\{\epsilon_i-\epsilon_{i+p}:1\leq i\leq n-p\}$.
	Then the following results on longest elements can be verified by
	\Cref{gammaandwforlongestyandzm1}.
	
	\begin{theorem}\label{Alongesty}
		Assume that $\kg$ is of type $\mathsf{A}_{n-1}$. 
		Let $\tilde{y}$ be the longest Weyl element in $\widehat{W}$ satisfying that
		$$\tilde{y}\circ(\Lambda )=\kappa\Lambda_0.$$
		\begin{enumerate}[wide]
			\item If $2|n-p$, the Weyl element $\widetilde{y}=t_\gamma w$ is given by
			$$\gamma=[\overset{n}{\overbrace{\dots,\underset{p}{\underbrace{2,2,\dots,2}},\underset{p}{\underbrace{1,1,\dots,1}},\underset{p}{\underbrace{0,0,\dots,0}},\underset{p}{\underbrace{-1,-1,\dots,-1}},\underset{p}{\underbrace{-2,-2,\dots,-2}},\dots}}]$$
			and 
			$$w=
			\binom{1~2~3~\cdots~n-1~ n}{\overline{(n-p)/2+1}~\overline{(n-p)/2+2}~\cdots~\overline{(n-p)/2+p}}.$$
			\item If $2\nmid n-p$, the Weyl element $\widetilde{y}=t_\gamma w$ is given by
			$$\gamma=[\overset{n}{\overbrace{\dots,\underset{p}{\underbrace{2,2,\dots,2}},\underset{p}{\underbrace{1,1,\dots,1}},\underset{p-1}{\underbrace{0,0,\dots,0}},\underset{p}{\underbrace{-1,-1,\dots,-1}},\underset{p}{\underbrace{-2,-2,\dots,-2}},\dots}}]$$
			and
			$$
			w=\binom{1~2~3~\cdots~n-1~ n}{\overline{(n+p+1)/2}_\rightarrow~\overline{(n-p+1)/2+1}~\cdots~\overline{(n-p+1)/2+p-1}~\overline{(n-p+1)/2}_\leftarrow}.
			$$
		\end{enumerate}
	\end{theorem}
	\begin{theorem}\label{Alongestz}
		Assume that $\kg$ is of type $\mathsf{A}_{n-1}$. Let $\tilde{z}$  be the longest Weyl element in $\widehat{W}$ satisfying that
		$$\tilde{z}\circ{\Lambda}=\Lambda_{sing},$$
		where $\Lambda_{sing}$ is a weight of minimal singular vectors given in \Cref{JS25b}.
		Then $\widetilde{z}=\widetilde{z_0}\widetilde{y}$, where $\widetilde{z_0}$ is given as follows: 
		\begin{enumerate}[wide] 
			\item When $p\geq n$, $\widetilde{z_0}=s_0=s_\theta t_{-\theta}$;
			\item When $p=2<n$, $\widetilde{z_0}=(\overline{1} ~\overline{2}^\downarrow) t_{-(\alpha_1+\alpha_3+\cdots+\alpha_{n-1})}$  if $2|n$, and
			 $\widetilde{z_0}=t_{\theta}s_{n-1}$ or $t_{\theta}s_1$ respectively if $2\nmid n$;
			\item When $n=5$ and $p=3$, three weights of minimal singular vectors correspond to $\widetilde{z_0}=
				(1452)t_{-(\alpha_1+\alpha_4)},~
				(543)(21)t_{\alpha_2},~
				(123)(45)t_{\alpha_3}$, 
			respectively;
			\item When $n=7$ and  $p=4$, $\widetilde{z_0}=(1573)t_{-(\alpha_1+\alpha_2+\alpha_5+\alpha_6)}$;
			\item When $n=8$ and $p=3$, $\widetilde{z_0}=(147852)t_{-(\alpha_1+\alpha_4+\alpha_7)}$;
			\item For other cases, set $s_1=\lfloor n/p\rfloor, s_2=\lceil n/p\rceil$ and define function $D(s)=(|sp-n|+1)s$. Set $D_p=\min\{D(s_1),D(s_2)\}$. Then $D_p=D(s_1)$ corresponds to 
			$$\widetilde{z_0}=
			(\overline{0}^\downarrow~1~2~\cdots~n-s_1p~\overline{n+1}~n~n-1~\cdots~s_1p+1)
			t_{\overline{\epsilon_p}-\overline{\epsilon_{n+1-p}}},$$
			and $D_p=D(s_2)$ corresponds to 
			$$\widetilde{z_0}=(\overline{1}~\overline{n}^\downarrow)t_{\overline{\epsilon_n}-\overline{\epsilon_1}}.$$
		\end{enumerate}
	\end{theorem}

	\subsection{Type $\mathsf{D}$}
	
	Recall that $W\cong(\mathbb{Z}_2)^{n-1}\rtimes\mathfrak{S}_n$. 
	For $I\subset \{1,2,\dots,n\}$, 
	we shall use the notation $\operatorname{flip}(I)$ for the action $$\epsilon_{i}\mapsto -\epsilon_{i}~(i\in I), ~~~\epsilon_i\mapsto\epsilon_i~(i\notin I)$$ 
	on weights. 
	
	For odd $p$, define a subset of $\mathbb{Z}$ as
	$$A_p:=\{a+b+c:a\in 4p\mathbb{Z}, b\in\{1,3,5,\dots,2p-1\}, c=\frac{p+1}{2} \ {\rm or} \ \frac{3p+1}{2}\}.$$
	For even $p$, define a subset of $\mathbb{Z}$ as
	$$B_p:=\{a+2b+\frac{p}{2}-1: a\in 2p\mathbb{Z}, b\in \{1,2,3, \dots,\frac{p}{2}\}\}.$$
	
	\begin{lemma}
		Assume that $\kg$ is of type $\mathsf{D}_{n}$. 
		\begin{enumerate}[wide]
			\item If $2\nmid p$ and $n\notin A_p$, the weight $\overline{\Lambda}+\rho$ is 
			$$[\underset{|\overline{(p-1)/2+1}|+|\overline{(p+1)/2+1}|}{\underbrace{\tfrac{p-1}{2},\dots,\tfrac{p-1}{2}}},
			\underset{|\overline{(p-3)/2+1}|+|\overline{(p+3)/2+1}|}{\underbrace{\tfrac{p-3}{2},\dots,\tfrac{p-3}{2}}},
			\dots,
			\underset{|\overline{2}|+|\overline{p}|}{\underbrace{1,\dots,1}},
			\underset{|\overline{1}|}{\underbrace{0,\dots,0}}].$$
			\item If $2\nmid p$ and $n\in A_p$, the weight $\overline{\Lambda}+\rho$ is 
			$$[\tfrac{p+1}{2},\underset{|\overline{(p-1)/2+1}|+|\overline{(p+1)/2+1}|-1}{\underbrace{\tfrac{p-1}{2},\dots,\tfrac{p-1}{2}}},
			\underset{|\overline{(p-3)/2+1}|+|\overline{(p+3)/2+1}|}{\underbrace{\tfrac{p-3}{2},\dots,\tfrac{p-3}{2}}},
			\dots,
			\underset{|\overline{2}|+|\overline{p}|}{\underbrace{1,\dots,1}},
			\underset{|\overline{1}|}{\underbrace{0,\dots,0}}].$$
			\item If $2|p$, the weight $\overline{\Lambda}+\rho$ is 
			$$[\underset{|\overline{p/2+1}|}{\underbrace{\tfrac{p}{2},\dots,\tfrac{p}{2}}},
			\underset{|\overline{(p-2)/2+1}|+|\overline{(p+2)/2+1}|}{\underbrace{\tfrac{p-2}{2},\dots,\tfrac{p-2}{2}}},
			\dots,
			\underset{|\overline{2}|+|\overline{p}|}{\underbrace{1,\dots,1}},
			\underset{|\overline{1}|}{\underbrace{0,\dots,0}}].$$
		\end{enumerate}
	\end{lemma}
	
	\begin{proof}
		The proof is analogous to that of type $\mathsf{A}$. 
		Note that a weight $[a_1,a_2,\dots,a_n]$ is in $Q$ if and only if $a_i\in\mathbb{Z}$ and $\sum a_i$ is even, which is guaranteed by the definitions of  $A_p$ and $B_p$, respectively. It can be checked directly that the condition $w(\overline{\Lambda}+\rho)= \rho-p\gamma$ is satisfied for  $w$ and $\gamma$ given in \Cref{Dlongesty}.
	\end{proof}

	Given $n$ and $p$, 
	we have 
	\begin{equation*}
		\Delta_p^+
		=
		\{\epsilon_i-\epsilon_{i+p}:1\leq i\leq n-p\}
		\cup
		\{\epsilon_i+\epsilon_j:
		1\leq i<j\leq n,\ i+j=2n-p\}.
	\end{equation*}
	Then the following results on longest elements can be verified by
	\Cref{gammaandwforlongestyandzm1}.
	
	\begin{theorem}\label{Dlongesty}
		Assume that $\kg$ is of type $\mathsf{D}_{n}$. 
		Let $\tilde{y}$ be the longest Weyl element in $\widehat{W}$ satisfying that
		$$\tilde{y}\circ(\Lambda )=\kappa\Lambda_0.$$
		Define $Q_I=\{n\}$ if $|I|$ is odd, otherwise empty.
		\begin{enumerate}[wide]
			\item If $2\nmid p$ and $n\notin A_p$, the Weyl element $\widetilde{y}=t_\gamma w$ is given by
			$$\gamma=[\dots,\underset{p}{\underbrace{2,2,\dots,2}},\underset{p}{\underbrace{1,1,\dots,1}},\underset{\frac{p+1}{2}}{\underbrace{0,0,\dots,0}}],$$
			\begin{align*}
				w=\operatorname{flip}(I\cup Q_I)
				\binom{1\cdots n}{\overline{n-\tfrac{p-1}{2}}~\overline{n+\tfrac{p-1}{2}}^{\downarrow}~\cdots~\overline{n-2}~\overline{n+2}^{\downarrow}~\overline{n-1}~\overline{n+1}^{\downarrow}~\overline{n}},
			\end{align*}
			where $I=\{\overline{n+1},\overline{n+2},\dots,\overline{n+\tfrac{p-1}{2}}\}$.
			\item If  $2\nmid p$ and $n\in A_p$, the Weyl element $\widetilde{y}=t_\gamma w$ is given by
			$$\gamma=[m+1,m+1,\dots,m+1,\underset{p-1}{\underbrace{m,m,\dots,m}},\dots,\underset{p}{\underbrace{2,2,\dots,2}},\underset{p}{\underbrace{1,1,\dots,1}},\underset{\frac{p+1}{2}}{\underbrace{0,0,\dots,0}}],$$
			\begin{align*}
				w=
				\operatorname{flip}(I\cup Q_I)
				\begin{pmatrix}
					1\cdots n\\
					\begin{array}{l}
						\overline{n-\frac{p-1}{2}}\; \overline{n+\frac{p-1}{2}}^{\downarrow}\; \cdots\; \overline{n-2}\; \overline{n+2}^{\downarrow}\; \overline{n-1}\; \overline{n+1}^{\downarrow}\; \overline{n}
					\end{array} 
				\end{pmatrix},
			\end{align*}
			where $m=\lfloor(n-\frac{p+1}{2})/p\rfloor$ and
			$I=\{\overline{n+1},\overline{n+2},\dots,\overline{n+\tfrac{p-1}{2}},\min\{\overline{n-\frac{p-1}{2}}\}\}$.
			\item If $2| p$ and $n\notin B_p$, the Weyl element $\widetilde{y}=t_\gamma w$ is given by
			$$\gamma=[\dots,\underset{p}{\underbrace{3,3,\dots,3}},\underset{p}{\underbrace{2,2,\dots,2}},\underset{p}{\underbrace{1,1,\dots,1}},\underset{\frac{p}{2}}{\underbrace{0,0,\dots,0}}],$$
			\begin{align*}
				w=
				\operatorname{flip}(I\cup Q_I)
				\binom{1\cdots n}{\overline{n-\tfrac{p}{2}}^{\downarrow}~\overline{n-\tfrac{p-2}{2}}~\overline{n+\tfrac{p-2}{2}}^{\downarrow}~\cdots~\overline{n-2}~\overline{n+2}^{\downarrow}~\overline{n-1}~\overline{n+1}^{\downarrow}~\overline{n}},
			\end{align*}
			where $I=\{\overline{n+1},\overline{n+2},\dots,\overline{n+\tfrac{p}{2}}\}$.
			\item If $2| p$ and $n\in B_p$, the Weyl element $\widetilde{y}=t_\gamma w$ is given by
			$$\gamma=[\dots,\underset{p}{\underbrace{3,3,\dots,3}},\underset{p}{\underbrace{2,2,\dots,2}},\underset{p-1}{\underbrace{1,1,\dots,1}},\underset{\frac{p}{2}+1}{\underbrace{0,0,\dots,0}}],$$
			\begin{align*}
				w=
				\operatorname{flip}(I\cup Q_I)
				\binom{1\cdots n}{\overline{n-\tfrac{p}{2}}^{\downarrow}~\overline{n-\tfrac{p-2}{2}}~\overline{n+\tfrac{p-2}{2}}^{\downarrow}~\cdots~\overline{n-2}~\overline{n+2}^{\downarrow}~\overline{n-1}~\overline{n+1}^{\downarrow}~\overline{n}},
			\end{align*}
			where $I=\{\overline{n+1},\overline{n+2}, \dots, \overline{n+\tfrac{p-2}{2}}, \overline{n+\tfrac{p}{2}}\}\setminus\{n-\tfrac{p}{2}\}$.
		\end{enumerate}
	\end{theorem}
	\begin{theorem}\label{Dlongestz}
		Assume that $\kg$ is of type $\mathsf{D}_{n}$. Let $\tilde{z}$  be the longest Weyl element in $\widehat{W}$ satisfying that
		$$\tilde{z}\circ{\Lambda}=\Lambda_{sing},$$
		where $\Lambda_{sing}$ is a  weight of minimal singular vectors given in \Cref{main2}.
		Then $\widetilde{z}=\widetilde{z_0}\widetilde{y}$, where $\widetilde{z_0}$ is given as follows: 
		\begin{enumerate}[wide] 
			\item When $p\geq 2n-2$, $\widetilde{z_0}=s_0=s_\theta t_{-\theta}$;
			\item When $p=3$ and $3|n-1$,   $\widetilde{z_0}=(23) t_{[1,0,1,0,0,\dots,0]}$;
			\item When $p=5$ and $n=7$, $\widetilde{z_0}=(13)(245) t_{[0,0,1,0,1,0,0]}$;
			\item When $p=5$ and $n=12$, $\widetilde{z_0}=(1~6~11~8~3~2)
			\operatorname{flip}(\{11,12\}) t_{[-1,1,1,0,0,-1,0,1,0,0,-1,0]}$;
			\item When $p=4$ and $n=4$,  three weights of minimal singular vectors correspond to  $\widetilde{z_0}=(13) \operatorname{flip}(\{1,4\})t_{[-1,0,1,0]}$, $(1234)\operatorname{flip}(\{2,4\})t_{[0,-1,0,-1]}$,  $(1234)\operatorname{flip}(\{2,3\})t_{[0,-1,0,1]}$, respectively;
			\item When $p=5$ and $n=4$,   three weights of minimal singular vectors correspond to   $\widetilde{z_0}=(12)\operatorname{flip}(\{1,4\}) t_{[-1,1,0,0]}$, $(1324)\operatorname{flip}(\{3,4\})t_{[0,0,-1,-1]}$, $(1324)\operatorname{flip}(\{2,3\})t_{[0,0,-1,1]}$, respectively;
			
			\item In all other cases, as defined in \Cref{main2}, the case $D=D_{(0)}$ corresponds to 
			$$\widetilde{z_0}=
			\begin{cases}
				(123\cdots p)\operatorname{flip}(\{m,n\})
				t_{\epsilon_p-\epsilon_{m}},&p<n,~3p\neq 2n,\\
				(123\cdots p)\operatorname{flip}(\{p,n\})
				t_{-2\epsilon_p},&p<n,~3p=2n,\\
				(123\cdots 2n-p)
				\operatorname{flip}(\{m\})t_{-\epsilon_m}w(2n),&p>n,
			\end{cases}
			$$ 
			the case $D=D_{(1)}$ corresponds to 
			$$\widetilde{z_0}=
			\begin{cases}
				(123\cdots r)\operatorname{flip}(\{m\})t_{-\epsilon_m}w(2n),
				&2\nmid s_1(p-1),\\
				(12)(123\cdots r+1)^2w(2n+1)w(2n),
				&2|s_1(p-1),
			\end{cases}
			$$ 
			and the case $D=D_{(2)}$ corresponds to 
			$$\widetilde{z_0}=
			\begin{cases}
				(12)w(1)w(2),& 2\nmid s_2,\\
				\operatorname{flip}(\{n\})w(1),& 2| s_2,
			\end{cases}$$
			where $r=2n-s_1p$, $m=n-p/2$, and
			$$w(j)=\begin{cases}
				\operatorname{flip}(\{\min\overline{j}\})
				t_{\epsilon_{\min\overline{j}}-\epsilon_m},
				&\overline{j}=\overline{2n-j}=\overline{m},\\
				\operatorname{flip}(\{n\})
				t_{\epsilon_{\min\overline{j}}},
				&\overline{j}=\overline{2n-j}=\overline{n},\\
				(\overline{j}~\overline{2n-j}^\downarrow)
				\operatorname{flip}(\{\max \overline{j}\})
				t_{\overline{\epsilon_{2n-j}}-\overline{\epsilon_j}},
				&\overline{j}\neq\overline{2n-j}.
			\end{cases}$$
		\end{enumerate}
	\end{theorem}

	\subsection{Type $\mathsf{E}$}
	
	The following Weyl elements were obtained by a finite computation and
	can be verified directly using \Cref{criteriononlongesty} and
	\Cref{criteriononlongestz}.
	
	\begin{lemma}
		Assume that $\kg$ is of type $\mathsf{E}$. 
		\begin{enumerate}[wide]
			\item If $p\geq \mathbf{h}^\vee$, the weight $\overline{\Lambda}+\rho=\rho$, that is, $\overline{\Lambda}=0$.
			\item If $p< \mathbf{h}^\vee$, the weight $\overline{\Lambda}+\rho$ is given by \Cref{tab:ELam}, where $(c_1,c_2,\dots,c_\ell)$ denotes the weight $c_1\alpha_1+c_2\alpha_2+\cdots+c_\ell\alpha_\ell$.
		\end{enumerate}
	\end{lemma}
	
	\begin{table}[H] 
		\centering
		\footnotesize
		\caption{ $\overline{\Lambda}+\rho$ for type $\mathsf{E}$  ($p<\mathbf{h}^\vee$)}
		\label{tab:ELam}
		\begin{tabular}{cccc}
			\toprule
			$p$  & $\mathsf{E}_6$ & $\mathsf{E}_7$  & $\mathsf{E}_8$ \\
			\midrule
			2  & $(1, 2, 2, 3, 2, 1)$ & $(2, 7/2, 4, 6, 9/2, 3, 3/2)$ & $(4,5,7,10,8,6,4,2)$ \\
			3  & $(2, 3, 4, 6, 4, 2)$ & $(3, 9/2, 6, 9, 15/2, 5, 5/2)$ & $(5,8,10,15,12,9,6,3)$ \\
			4  & $(2, 3, 4, 6, 4, 2)$ & $(3, 9/2, 6, 9, 15/2, 5, 5/2)$ & $(6,9,12,18,15,12,8,4)$ \\
			5  & $(4, 5, 7, 10, 7, 4)$ & $(5, 15/2, 10, 15, 23/2, 8, 9/2)$ & $(8,12,16,24,20,15,10,5)$ \\
			6  & $(4, 5, 7, 10, 7, 4)$ & $(5, 15/2, 10, 15, 23/2, 8, 9/2)$ & $(8,12,16,24,20,15,10,5)$ \\
			7  & $(4, 6, 8, 11, 8, 4)$ & $(7, 19/2, 13, 19, 29/2, 10, 11/2)$ & $(10,15,20,30,25,19,13,7)$ \\
			8  & $(5, 7, 9, 13, 9, 5)$ & $(7, 21/2, 14, 21, 33/2, 12, 13/2)$ & $(12,18,24,36,29,22,15,8)$ \\
			9  & $(6, 8, 11, 15, 11, 6)$ & $(9, 25/2, 17, 25, 39/2, 14, 15/2)$ & $(12,18,24,36,29,22,15,8)$ \\
			10 & $(7, 9, 13, 18, 13, 7)$ & $(9, 27/2, 18, 26, 41/2, 14, 15/2)$ & $(14,21,28,42,34,26,18,9)$ \\
			11 & $(8, 11, 15, 21, 15, 8)$ & $(10, 29/2, 19, 28, 43/2, 15, 15/2)$ & $(18,26,35,52,42,32,22,11)$ \\
			12 & -- & $(11, 31/2, 21, 30, 47/2, 16, 17/2)$ & $(18,26,35,52,42,32,22,11)$ \\
			13 & -- & $(12, 33/2, 23, 33, 51/2, 18, 19/2)$ & $(18,27,36,54,44,34,23,12)$ \\
			14 & -- & $(13, 37/2, 25, 36, 57/2, 20, 21/2)$ & $(20,29,39,58,47,36,25,13)$ \\
			15 & -- & $(14, 41/2, 27, 40, 63/2, 22, 23/2)$ & $(22,32,43,64,52,40,27,14)$ \\
			16 & -- & $(15, 45/2, 30, 44, 69/2, 24, 25/2)$ & $(24,36,48,71,58,44,30,15)$ \\
			17 & -- & $(17, 49/2, 33, 48, 75/2, 26, 27/2)$ & $(27,40,53,79,64,49,33,17)$ \\
			18 & -- & -- & $(28,41,55,81,66,50,34,17)$ \\
			19 & -- & -- & $(29,42,57,84,68,52,35,18)$ \\
			20 & -- & -- & $(30,44,59,87,71,54,37,19)$ \\
			21 & -- & -- & $(31,46,61,91,74,57,39,20)$ \\
			22 & -- & -- & $(32,48,64,95,78,60,41,21)$ \\
			23 & -- & -- & $(34,50,67,100,82,63,43,22)$ \\
			24 & -- & -- & $(36,53,71,105,86,66,45,23)$ \\
			25 & -- & -- & $(38,56,75,111,90,69,47,24)$ \\
			26 & -- & -- & $(40,59,79,117,95,72,49,25)$ \\
			27 & -- & -- & $(42,62,83,123,100,76,51,26)$ \\
			28 & -- & -- & $(44,65,87,129,105,80,54,27)$ \\
			29 & -- & -- & $(46,68,91,135,110,84,57,29)$ \\
			\bottomrule
		\end{tabular}
	\end{table}
	
	\begin{theorem}\label{Elongesty}
		Assume that $\kg$ is of type $\mathsf{E}$. 
		Let $\tilde{y}$ be the longest Weyl element in $\widehat{W}$ satisfying that
		$$\tilde{y}\circ(\Lambda )=\kappa\Lambda_0.$$
		\begin{enumerate}[wide]
			\item If $p\geq \mathbf{h}^\vee$, the Weyl element $\widetilde{y}=1$.
			\item If $p< \mathbf{h}^\vee$, the Weyl element $\widetilde{y}=t_\gamma w$ is given by \Cref{tab:allyE}, where $(c_1,c_2,\dots,c_\ell)$ denotes the weight $c_1\alpha_1+c_2\alpha_2+\cdots+c_\ell\alpha_\ell$, and the sequence $i_1,i_2,\dots,i_m$ in the column of $w$ stands for the product of simple reflections $s_{i_1}s_{i_2}\cdots s_{i_m}$.
		\end{enumerate}
	\end{theorem}
	
	{\footnotesize
	\begin{longtable}{c c c}
		\caption{$\widetilde{y}=t_\gamma w$ for type $\mathsf{E}$  ($p<\mathbf{h}^\vee$)}\label{tab:allyE}\\ 
		\toprule
		$p$ & $\gamma$ & $w$ \\
		\midrule
		\endfirsthead
		\multicolumn{3}{c}{{\tablename\ \thetable\ (continued)}}\\
		\toprule
		$p$ & $\gamma$ & $w$ \\
		\midrule
		\endhead
		\bottomrule
		\multicolumn{3}{r}{continued on next page}\\
		\endfoot
		\bottomrule
		\endlastfoot
		
		{$\mathsf{E}_6$}&&\\
		
		$2$  & $(4, 6, 8, 11, 8, 4)$ & $3, 2, 4, 5, 6, 4, 3, 2, 4, 5, 1, 3, 4, 2$ \\
		$3$  & $(3, 4, 6, 8, 6, 3)$ & $5, 3, 4, 1, 3, 2, 4, 5, 6, 5, 4, 1, 3, 2, 4, 5, 4, 1, 3, 2, 4$ \\
		$4$  & $(2, 3, 4, 6, 4, 2)$ & $4, 3, 2, 4, 5, 6, 4, 1, 3, 2, 4, 5, 4, 1, 3, 2, 4$ \\
		$5$  & $(2, 3, 4, 6, 4, 2)$ & $4, 5, 6, 2, 4, 5, 3, 4, 1, 3, 2, 4, 5, 6, 4, 3, 2, 4, 5, 4, 1, 3, 2, 4, 1$ \\
		$6$  & $(1, 2, 2, 3, 2, 1)$ & $2, 4, 5, 6, 3, 4, 5, 1, 3, 2, 4$ \\
		$7$  & $(1, 2, 2, 3, 2, 1)$ & $2, 4, 5, 6, 3, 4, 1, 3, 2, 4, 5, 4, 3, 2$ \\
		$8$  & $(1, 2, 2, 3, 2, 1)$ & $2, 4, 5, 3, 4, 1, 3, 2, 4, 5, 6, 5, 4, 3, 2, 4, 1$ \\
		$9$  & $(1, 2, 2, 3, 2, 1)$ & $2, 4, 5, 3, 4, 1, 3, 2, 4, 5, 6, 5, 4, 3, 2, 4, 5, 1, 3$ \\
		$10$ & $(1, 2, 2, 3, 2, 1)$ & $2, 4, 5, 3, 4, 1, 3, 2, 4, 5, 6, 5, 4, 3, 2, 4, 5, 1, 3, 4 $\\
		$11$ & $(1, 2, 2, 3, 2, 1)$ & $2, 4, 5, 3, 4, 1, 3, 2, 4, 5, 6, 5, 4, 3, 2, 4, 5, 1, 3, 4, 2$ \\
		\midrule
		
		{$\mathsf{E}_7$}&&\\
		
		$2$  & $(9,13,18,26,20,14,7)$ & $4,3,1,6,5,4,3,5,6,7,2,4,3,1,5,4,3,6,5,4,7,2,4,3,1,5,4,3,6,5,4,2$ \\
		$3$  & $(6,9,12,18,14,10,5)$ & $\makecell[t]{6,5,4,3,1,5,4,7,6,5,7,2,4,3,1,5,4,3,6,5,4,7,6,2,4,3,1,5,4,3,6,5,\\7,6,2,4,5}$ \\
		$4$  & $(4,6,8,12,9,6,3)$ & $4,3,1,5,4,6,2,4,3,1,5,4,3,6,5,7,6,2,4,5$ \\
		$5$  & $(4,6,8,12,9,6,3)$ & $\makecell[t]{4,3,1,5,4,3,6,5,7,2,4,3,1,5,4,3,6,5,4,7,6,5,2,4,3,1,5,4,3,6,5,4,\\7,6,2,4,3,5,4,6,7}$ \\
		$6$  & $(3,4,6,8,6,4,2)$ & $3,1,4,3,5,4,6,5,7,2,4,3,1,5,4,3,6,5,4,7,6,2,4,3,5,4$ \\
		$7$  & $(3,4,6,8,6,4,2)$ & $\makecell[t]{3,1,4,3,5,4,6,5,7,6,2,4,3,1,5,4,3,6,5,4,7,6,5,2,4,3,1,5,4,3,6,5,\\7,2,4}$ \\
		$8$  & $(2,3,4,6,5,4,2)$ & $6,5,4,3,1,7,6,5,4,3,2,4,3,1,5,4,3,6,5,4,7,6,2,4,3,5,4,6$ \\
		$9$  & $(2,3,4,6,5,4,2)$ & $\makecell[t]{6,5,4,3,1,7,6,5,4,3,2,4,3,1,5,4,3,6,5,4,7,6,5,2,4,3,1,5,4,3,6,5,\\2,4}$ \\
		$10$ & $(2,2,3,4,3,2,1)$ & $1,3,4,5,6,7,2,4,3,1,5,4,3,6,5,4,2,4,3,5$ \\
		$11$ & $(2,2,3,4,3,2,1)$ & $1,3,4,5,6,7,2,4,3,1,5,4,3,6,5,4,2,4,3,1,5,4,6$ \\
		$12$ & $(2,2,3,4,3,2,1)$ & $1,3,4,5,6,7,2,4,3,1,5,4,3,6,5,4,2,4,3,1,5,4,3,6,5,7$ \\
		$13$ & $(2,2,3,4,3,2,1)$ & $1,3,4,5,6,7,2,4,3,1,5,4,3,6,5,4,2,4,3,1,5,4,3,6,5,4,7,6$ \\
		$14$ & $(2,2,3,4,3,2,1)$ & $1,3,4,5,6,7,2,4,3,1,5,4,3,6,5,4,2,4,3,1,5,4,3,6,5,4,7,6,5,2$ \\
		$15$ & $(2,2,3,4,3,2,1)$ & $1,3,4,5,6,7,2,4,3,1,5,4,3,6,5,4,2,4,3,1,5,4,3,6,5,4,7,6,5,2,4$ \\
		$16$ & $(2,2,3,4,3,2,1)$ & $1,3,4,5,6,7,2,4,3,1,5,4,3,6,5,4,2,4,3,1,5,4,3,6,5,4,7,6,5,2,4,3$ \\
		$17$ & $(2,2,3,4,3,2,1)$ & $1,3,4,5,6,7,2,4,3,1,5,4,3,6,5,4,2,4,3,1,5,4,3,6,5,4,7,6,5,2,4,3,1$ \\
		\midrule
		
		{$\mathsf{E}_8$}&&\\
		
		$2$ & $(24,36,48,71,58,44,30,15)$ & $\makecell[t]{7,5,6,3,4,5,2,4,5,6,7,8,3,4,5,6,7,1,3,4,5,6,2,4,5,6,7,8,3,4,5,6,7,\\1,3,2,4,5,6,7,8,3,4,5,1,3,4,2,4,5,6,7,3,4,5,6,1,3,4,5,2,4,3,1}$\\
		$3$ & $(16,24,32,48,39,30,20,10)$ & $\makecell[t]{4,3,5,6,5,4,3,1,7,6,5,8,2,4,5,6,7,8,3,4,5,6,7,1,3,4,5,2,4,5,6,7,8,\\3,4,5,6,7,1,3,4,2,4,5,6,7,8,3,4,5,6,7,1,3,4,2,4,5,6,3,4,5,1,3,4,2}$\\
		$4$ & $(12,18,24,36,29,22,15,8)$ & $\makecell[t]{4,3,1,5,4,6,8,7,6,5,4,3,2,4,5,6,7,8,3,4,5,6,7,1,3,4,5,6,2,4,5,6,7,\\8,3,4,5,6,7,1,3,4,5,2,4,5,6,7,8,3,4,5,1,3,4,2,4,5,6,7,3,4,5,6}$\\
		$5$ & $(10,15,20,30,24,18,12,6)$ & $\makecell[t]{4,5,6,7,3,4,5,1,3,2,4,5,6,7,8,3,4,5,6,7,1,3,4,5,6,2,4,5,6,7,8,3,4,\\5,6,7,1,3,4,5,2,4,5,6,7,8,3,4,5,6,7,1,3,4,5,6,2,4,5,6,7,3,4,5,6,1,\\3,2,4,5}$\\
		$6$ & $(8,12,16,24,20,15,10,5)$ & $\makecell[t]{5,4,3,1,6,5,4,3,7,6,5,8,7,2,4,5,6,7,8,3,4,5,6,7,1,3,4,5,2,4,5,6,7,\\8,3,4,5,6,7,1,3,4,5,2,4,5,6,7,8,3,4,5,6,7,1,3,4,2,4,5,6,3,4,5}$\\
		$7$ & $(7,10,14,20,16,12,8,4)$ & $\makecell[t]{3,1,4,3,5,4,6,5,7,6,8,2,4,5,6,7,8,3,4,5,6,7,1,3,4,5,6,2,4,5,6,7,8,\\3,4,5,6,1,3,4,2,4,5,6,7,8,3,4,5,6,1,3,2,4,5}$\\
		$8$ & $(6,9,12,18,15,12,8,4)$ & $\makecell[t]{6,5,4,3,1,7,6,5,4,3,8,7,6,5,4,2,4,5,6,7,8,3,4,5,6,7,1,3,4,5,6,2,4,\\5,6,7,8,3,4,5,6,7,1,3,4,5,2,4,5,6,7,8,3,4,5,6,1,3,4,2,4,5,3,4}$\\
		$9$ & $(5,8,10,15,12,9,6,3)$ & $\makecell[t]{2,4,5,6,7,8,3,4,5,6,7,1,3,4,5,6,2,4,5,6,7,8,3,4,5,6,7,1,3,4,5,2,4,\\5,6,7,8,3,4,5,6,1,3,4,2,4,5,3,4}$\\
		$10$ & $(5,8,10,15,12,9,6,3)$ & $\makecell[t]{2,4,5,6,7,8,3,4,5,6,7,1,3,4,5,6,2,4,5,6,7,8,3,4,5,6,7,1,3,4,5,6,2,\\4,5,6,7,8,3,4,5,6,7,1,3,4,5,2,4,5,6,7,3,4,5,6,1,3,4,2,4,5,3,4}$\\
		$11$ & $(5,8,10,15,12,9,6,3)$ & $\makecell[t]{2,4,5,6,7,8,3,4,5,6,7,1,3,4,5,6,2,4,5,6,7,8,3,4,5,6,7,1,3,4,5,6,2,\\4,5,6,7,8,3,4,5,6,7,1,3,4,5,6,2,4,5,6,7,8,3,4,5,6,1,3,4,5,2,4,5,6,\\7,3,4,5,1,3,2,4}$\\
		$12$ & $(4,5,7,10,8,6,4,2)$ & $\makecell[t]{1,3,4,5,6,7,8,2,4,5,6,7,3,4,5,6,1,3,4,5,2,4,5,6,7,8,3,4,5,6,1,3,4,\\2,4,5,6,7,3,4}$\\
		$13$ & $(4,5,7,10,8,6,4,2)$ & $\makecell[t]{1,3,4,5,6,7,8,2,4,5,6,7,3,4,5,6,1,3,4,5,2,4,5,6,7,8,3,4,5,6,7,1,3,\\4,5,2,4,5,6,7,8,3,4,5,6,2,4}$\\
		$14$ & $(4,5,7,10,8,6,4,2)$ & $\makecell[t]{1,3,4,5,6,7,8,2,4,5,6,7,3,4,5,6,1,3,4,5,2,4,5,6,7,8,3,4,5,6,7,1,3,\\4,5,6,2,4,5,6,7,8,3,4,5,6,7,1,2,4,5,3,4,1}$\\
		$15$ & $(4,5,7,10,8,6,4,2)$ & $\makecell[t]{1,3,4,5,6,7,8,2,4,5,6,7,3,4,5,6,1,3,4,5,2,4,5,6,7,8,3,4,5,6,7,1,3,\\4,5,6,2,4,5,6,7,8,3,4,5,6,7,1,3,2,4,5,6,3,4,5,1,3,2,4}$\\
		$16$ & $(4,5,7,10,8,6,4,2)$ & $\makecell[t]{1,3,4,5,6,7,8,2,4,5,6,7,3,4,5,6,1,3,4,5,2,4,5,6,7,8,3,4,5,6,7,1,3,\\4,5,6,2,4,5,6,7,8,3,4,5,6,7,1,3,2,4,5,6,7,3,4,5,1,3,4,2,4,5,3}$\\
		$17$ & $(4,5,7,10,8,6,4,2)$ & $\makecell[t]{1,3,4,5,6,7,8,2,4,5,6,7,3,4,5,6,1,3,4,5,2,4,5,6,7,8,3,4,5,6,7,1,3,\\4,5,6,2,4,5,6,7,8,3,4,5,6,7,1,3,2,4,5,6,7,8,3,4,5,1,3,4,2,4,5,6,3,\\4,1}$\\
		$18$ & $(2,3,4,6,5,4,3,2)$ & $\makecell[t]{8,7,6,5,4,3,1,2,4,5,6,7,8,3,4,5,6,7,2,4,5,6,3,4,5,1,3,4,2,4,5,6,7,\\3,4,5,1,3}$\\
		$19$ & $(2,3,4,6,5,4,3,2)$ & $\makecell[t]{8,7,6,5,4,3,1,2,4,5,6,7,8,3,4,5,6,7,2,4,5,6,3,4,5,1,3,4,2,4,5,6,7,\\8,3,4,5,6,1,3,4}$\\
		$20$ & $(2,3,4,6,5,4,3,2)$ & $\makecell[t]{8,7,6,5,4,3,1,2,4,5,6,7,8,3,4,5,6,7,2,4,5,6,3,4,5,1,3,4,2,4,5,6,7,\\8,3,4,5,6,7,1,3,4,5,2}$\\
		$21$ & $(2,3,4,6,5,4,3,2)$ & $\makecell[t]{8,7,6,5,4,3,1,2,4,5,6,7,8,3,4,5,6,7,2,4,5,6,3,4,5,1,3,4,2,4,5,6,7,\\8,3,4,5,6,7,1,3,4,5,6,2,4}$\\
		$22$ & $(2,3,4,6,5,4,3,2)$ & $\makecell[t]{8,7,6,5,4,3,1,2,4,5,6,7,8,3,4,5,6,7,2,4,5,6,3,4,5,1,3,4,2,4,5,6,7,\\8,3,4,5,6,7,1,3,4,5,6,2,4,5,3}$\\
		$23$ & $(2,3,4,6,5,4,3,2)$ & $\makecell[t]{8,7,6,5,4,3,1,2,4,5,6,7,8,3,4,5,6,7,2,4,5,6,3,4,5,1,3,4,2,4,5,6,7,\\8,3,4,5,6,7,1,3,4,5,6,2,4,5,3,4,1}$\\
		$24$ & $(2,3,4,6,5,4,3,2)$ & $\makecell[t]{8,7,6,5,4,3,1,2,4,5,6,7,8,3,4,5,6,7,2,4,5,6,3,4,5,1,3,4,2,4,5,6,7,\\8,3,4,5,6,7,1,3,4,5,6,2,4,5,3,4,1,3,2}$\\
		$25$ & $(2,3,4,6,5,4,3,2)$ & $\makecell[t]{8,7,6,5,4,3,1,2,4,5,6,7,8,3,4,5,6,7,2,4,5,6,3,4,5,1,3,4,2,4,5,6,7,\\8,3,4,5,6,7,1,3,4,5,6,2,4,5,3,4,1,3,2,4}$\\
		$26$ & $(2,3,4,6,5,4,3,2)$ & $\makecell[t]{8,7,6,5,4,3,1,2,4,5,6,7,8,3,4,5,6,7,2,4,5,6,3,4,5,1,3,4,2,4,5,6,7,\\8,3,4,5,6,7,1,3,4,5,6,2,4,5,3,4,1,3,2,4,5}$\\
		$27$ & $(2,3,4,6,5,4,3,2)$ & $\makecell[t]{8,7,6,5,4,3,1,2,4,5,6,7,8,3,4,5,6,7,2,4,5,6,3,4,5,1,3,4,2,4,5,6,7,\\8,3,4,5,6,7,1,3,4,5,6,2,4,5,3,4,1,3,2,4,5,6}$\\
		$28$ & $(2,3,4,6,5,4,3,2)$ & $\makecell[t]{8,7,6,5,4,3,1,2,4,5,6,7,8,3,4,5,6,7,2,4,5,6,3,4,5,1,3,4,2,4,5,6,7,\\8,3,4,5,6,7,1,3,4,5,6,2,4,5,3,4,1,3,2,4,5,6,7}$\\
		$29$ & $(2,3,4,6,5,4,3,2)$ & $\makecell[t]{8,7,6,5,4,3,1,2,4,5,6,7,8,3,4,5,6,7,2,4,5,6,3,4,5,1,3,4,2,4,5,6,7,\\8,3,4,5,6,7,1,3,4,5,6,2,4,5,3,4,1,3,2,4,5,6,7,8}$\\

	\end{longtable}
	}
	
	\begin{theorem}\label{Elongestz}
		Assume that $\kg$ is of type $\mathsf{E}$. Let $\tilde{z}$  be the longest Weyl element in $\widehat{W}$ satisfying that
		$$\tilde{z}\circ{\Lambda}=\Lambda_{sing},$$
		where $\Lambda_{sing}$ is a weight of minimal singular vectors given in \Cref{singE}.
		Then $\widetilde{z}=\widetilde{z_0}\widetilde{y}$, where $\widetilde{z_0}$ is given as follows: 
		\begin{enumerate}[wide]
			\item If $p\geq \mathbf{h}^\vee$, the Weyl element $\widetilde{z_0}=s_0$.
			\item If $p< \mathbf{h}^\vee$, the Weyl element $\widetilde{z_0}=wt_\gamma$ is given by \Cref{tab:allz0E}, where $(c_1,c_2,\dots,c_\ell)$ denotes the weight $c_1\alpha_1+c_2\alpha_2+\cdots+c_\ell\alpha_\ell$, and the sequence $i_1,i_2,\dots,i_m$ in the column of $w$ stands for the product of simple reflections $s_{i_1}s_{i_2}\cdots s_{i_m}$.
		\end{enumerate}
	\end{theorem}
	
	{\footnotesize
		\begin{longtable}{c c c}
			\caption{$\widetilde{z_0}=w t_\gamma$ for type $\mathsf{E}$  ($p<\mathbf{h}^\vee$)}\label{tab:allz0E}\\ 
			\toprule
			$p$ & $w$ & $\gamma$ \\
			\midrule
			\endfirsthead
			\multicolumn{3}{c}{{\tablename\ \thetable\ (continued)}}\\
			\toprule
			$p$ & $w$ & $\gamma$ \\
			\midrule
			\endhead
			\bottomrule
			\multicolumn{3}{r}{continued on next page}\\
			\endfoot
			\bottomrule
			\endlastfoot
			
			{$\mathsf{E}_6$}&&\\
			
			$2$ & $2$ & $(1, 1, 2, 3, 2, 1)$ \\
			$3$ & $1, 2, 3, 6, 5$ & $(0, 0, 0, 1, 0, 0)$ \\
			$4$ & $3, 4, 5, 4, 2, 3, 4$ & $(1, 0, 1, 0, 1, 1)$ \\
			$5$ & $1, 5, 4, 2, 6, 5, 4$ & $(0, 0, 1, 0, 0, 0)$ \\
			$5$ & $3, 1, 4, 2, 3, 4, 6$ & $(0, 0, 0, 0, 1, 0)$ \\
			$6$ & $1, 4, 2, 3, 4, 6, 5, 4, 3$ & $(0, 0, -1, -1, -1, 0)$ \\
			$7$ & $4, 3, 1, 5, 4, 2, 3, 6, 5$ & $(0, 0, 0, 1, 0, 0)$ \\
			$8$ & $3, 1, 5, 4, 2, 3, 4, 6, 5, 4, 2, 3, 4$ & $(0, -1, -1, -2, -1, 0)$ \\
			$9$ & $3, 1, 5, 4, 2, 3, 4, 5, 6, 5, 4, 2, 3, 1, 4$ & $(-1, -1, -1, -2, -1, -1)$ \\
			$10$ & $4, 3, 1, 5, 4, 2, 3, 4, 5, 6, 5, 4, 2, 3, 1$ & $(-1, -1, -1, -1, -1, -1)$ \\
			$11$ & $2, 4, 3, 1, 5, 4, 2, 3, 4, 5, 6, 5, 4, 3, 1$ & $(-1, 0, -1, -1, -1, -1)$ \\
			\midrule
			
			{$\mathsf{E}_7$}&&\\
			
			$2$ & ${7,5,3}$ & $(1,1,1,2,1,1,0)$\\
			$3$ & ${2,5,6}$ & $(1,1,2,3,2,1,1)$\\
			$4$ & ${4,2,3,4,5,6,7,1,3}$ & $(0,1,0,2,1,0,-1)$\\
			$5$ & ${5,1,3,4,2,6,7,3,4,1,3}$ & $(0,0,0,1,2,1,0)$\\
			$6$ & ${3,4,2,7,6,1,3,4,5}$ & $(0,0,0,0,-1,0,0)$\\
			$7$ & ${5,4,2,6,7,5,6,4,5,3,4,5,6,7,1,3,4,5,6}$ & $(-1,0,-2,-2,-2,-2,-1)$\\
			$8$ & ${5,6,7,4,5,6,3,4,5,1,3,4,2}$ & $(0,-1,0,0,0,0,0)$\\
			$9$ & ${7,6,5,1,3,4,2,6,7,5,6,3,4,5,1,3,4,2,6,5,1,3,4}$ & $(-1,-2,-2,-4,-2,-1,0)$\\
			$10$ & ${6,7,5,6,3,4,5,1,3,4,2,4,5,6,7,3,4}$ & $(0,-1,-1,-2,-1,-1,-1)$\\
			$11$ & ${5,6,7,4,5,6,3,4,5,1,3,4,2,4,5,6,7}$ & $(0,-1,0,-1,-1,-1,-1)$\\
			$12$ & ${2,6,7,4,5,6,3,4,5,1,3,4,2,4,5,6,3,4,5,1,3}$ & $(-1,-1,-2,-2,-2,-1,0)$\\
			$13$ & ${4,2,5,6,7,4,5,6,3,4,5,1,3,4,2,4,5,3,4,1,3}$ & $(-1,-1,-2,-2,-1,0,0)$\\
			$14$ & ${2,5,6,7,4,5,6,3,4,5,1,3,4,2,4,5,6,7,3,4,5,6,1,3,4}$ & $(-1,-1,-2,-3,-2,-2,-1)$\\
			$15$ & ${4,2,5,6,7,4,5,6,3,4,5,1,3,4,2,4,5,6,7,3,4,5,6,1,3}$ & $(-1,-1,-2,-2,-2,-2,-1)$\\
			$16$ & ${3,4,2,5,6,7,4,5,6,3,4,5,1,3,4,2,4,5,6,7,3,4,5,6,1}$ & $(-1,-1,-1,-2,-2,-2,-1)$\\
			$17$ & ${1,3,4,2,5,6,7,4,5,6,3,4,5,1,3,4,2,4,5,6,7,3,4,5,6}$ & $(0,-1,-1,-2,-2,-2,-1)$\\
			\midrule
			
			{$\mathsf{E}_8$}&&\\
			
			$2$ & ${8}$ & $(2,3,4,6,5,4,3,1)$\\
			$3$ & ${8}$ & $(2,3,4,6,5,4,3,1)$\\
			$4$ & ${2,4,7,8,6,7,3}$ & $(1,1,1,2,2,1,0,0)$\\
			$5$ & ${1,3,4,2,6,7,8}$ & $(1,1,2,3,3,2,1,0)$\\
			$6$ & ${5,4,2,6,5,4,3,1,7,6,5,4,2,3,4,8,7,6,5}$ & $(0,0,0,0,-2,0,0,0)$\\
			$7$ & ${5,4,2,6,7,8,5,6,7,4,5}$ & $(1,1,2,2,1,1,0,0)$\\
			$8$ & ${3,6,5,4,2,3,4,5,6,7,8,6,5,4,3,1,7,6,5,4,3,4,5}$ & $(-1,0,-2,-2,-2,-1,-1,-1)$\\
			$9$ & ${3,1,4,3,5,4,2,6,7,8,5,6,7}$ & $(1,1,2,3,2,1,0,0)$\\
			$10$ & ${3,1,5,4,3,7,6,5,4,2,3,1,7,6,5,4,3,8,7,6,5,4,2,6,5,4,7}$ & $(0,-2,-1,-3,-2,-1,-1,0)$\\
			$11$ & ${5,4,2,3,1,4,3,5,4,2,6,7,8}$ & $(1,1,2,3,3,2,1,0)$\\
			$12$ & ${2,4,3,1,7,6,5,4,3,8,7,6,5,4,2,4,3,5,4,6,5}$ & $(0,-1,-1,-2,-2,-1,0,0)$\\
			$13$ & ${2,4,3,5,4,2,6,5,4,3,7,6,5,4,2,1,8,7,6,5,4}$ & $(0,0,1,0,0,0,0,0)$\\
			$14$ & ${5,4,2,6,5,4,3,1,7,6,5,4,3,8,7,6,5,4,2,4,3,1,5,4,3}$ & $(-1,-1,-2,-2,-1,0,0,0)$\\
			$15$ & $\makecell[t]{5,4,2,4,3,1,5,4,3,8,7,6,5,4,2,3,1,7,6,5,4,3,8,7,6,5,4,2,6,5,4,3,1,\\7,6,5,4,3,4,5,8,7,6}$ & $(-2,-2,-4,-5,-4,-4,-2,-1)$\\
			$16$ & ${2,4,3,5,4,2,6,5,4,3,1,7,6,5,4,3,8,7,6,5,4,2,4,3,1}$ & $(-1,-1,-1,-1,0,0,0,0)$\\
			$17$ & ${5,4,2,3,1,4,3,5,4,2,6,5,4,3,1,7,6,5,4,3,8,7,6,5,4}$ & $(0,1,0,0,0,0,0,0)$\\
			$18$ & ${1,3,4,2,6,5,4,3,1,7,6,5,4,3,8,7,6,5,4,2,4,3,1,5,4,3,6,5,4,2,7,6,5}$ & $(-1,-2,-2,-3,-3,-2,-1,0)$\\
			$19$ & ${3,1,4,3,5,4,2,6,5,4,3,1,7,6,5,4,3,8,7,6,5,4,2,4,3,1,5,4,3,6,5,4,2}$ & $(-1,-2,-2,-3,-2,-1,0,0)$\\
			$20$ & $\makecell[t]{1,3,5,4,2,6,5,4,3,1,7,6,5,4,3,8,7,6,5,4,2,4,3,1,5,4,3,6,5,4,2,7,6,\\5,4,8,7}$ & $(-1,-2,-2,-4,-3,-2,-2,-1)$\\
			$21$ & $\makecell[t]{3,1,4,3,5,4,2,6,5,4,3,1,7,6,5,4,3,8,7,6,5,4,2,4,3,1,5,4,3,6,5,4,2,\\7,6,8,7}$ & $(-1,-2,-2,-3,-2,-2,-2,-1)$\\
			$22$ & $\makecell[t]{4,2,3,1,4,3,5,4,2,6,5,4,3,1,7,6,5,4,3,8,7,6,5,4,2,4,3,1,5,4,3,6,5,\\7,6,8,7}$ & $(-1,-1,-2,-2,-2,-2,-2,-1)$\\
			$23$ & $\makecell[t]{6,5,4,2,3,1,4,3,5,4,2,6,5,4,3,1,7,6,5,4,3,8,7,6,5,4,2,4,3,5,4,6,5,\\7,6,8,7}$ & $(0,-1,-1,-2,-2,-2,-2,-1)$\\
			$24$ & $\makecell[t]{2,3,1,4,3,5,4,2,6,5,4,3,1,7,6,5,4,3,8,7,6,5,4,2,4,3,1,5,4,3,6,5,4,\\2,7,6,5,4,3,1,8,7,6,5,4}$ & $(-2,-2,-3,-5,-4,-3,-2,-1)$\\
			$25$ & $\makecell[t]{4,2,3,1,4,3,5,4,2,6,5,4,3,1,7,6,5,4,3,8,7,6,5,4,2,4,3,1,5,4,3,6,5,\\4,2,7,6,5,4,3,1,8,7,6,5}$ & $(-2,-2,-3,-4,-4,-3,-2,-1)$\\
			$26$ & $\makecell[t]{5,4,2,3,1,4,3,5,4,2,6,5,4,3,1,7,6,5,4,3,8,7,6,5,4,2,4,3,1,5,4,3,6,\\5,4,2,7,6,5,4,3,1,8,7,6}$ & $(-2,-2,-3,-4,-3,-3,-2,-1)$\\
			$27$ & $\makecell[t]{6,5,4,2,3,1,4,3,5,4,2,6,5,4,3,1,7,6,5,4,3,8,7,6,5,4,2,4,3,1,5,4,3,\\6,5,4,2,7,6,5,4,3,1,8,7}$ & $(-2,-2,-3,-4,-3,-2,-2,-1)$\\
			$28$ & $\makecell[t]{7,6,5,4,2,3,1,4,3,5,4,2,6,5,4,3,1,7,6,5,4,3,8,7,6,5,4,2,4,3,1,5,4,\\3,6,5,4,2,7,6,5,4,3,1,8}$ & $(-2,-2,-3,-4,-3,-2,-1,-1)$\\
			$29$ & $\makecell[t]{8,7,6,5,4,2,3,1,4,3,5,4,2,6,5,4,3,1,7,6,5,4,3,8,7,6,5,4,2,4,3,1,5,\\4,3,6,5,4,2,7,6,5,4,3,1}$ & $(-2,-2,-3,-4,-3,-2,-1,0)$\\
			
		\end{longtable}
	}
	
	\appendix
	
	\section{Discussion on Minimum Values}
	
	Let $N,p,M\in \mathbb{Z}_{\geq 1}$. 
	Define a function $F_{N,p,M}(s)$ on $\mathbb{Z}_{\geq 1}$ by 
	$$F_{N,p,M}(s)=(|N-sp|+M)s.$$
	Set $s_1=\lfloor N/p\rfloor$ and $s_2=\lceil N/p\rceil$.
	
	\begin{lemma}\label{Appen0}
		Assume that $N\geq p$ and $N\geq M$. Set $F=F_{N,p,M}$, and  \(F_{\min} = \min\{F(s) : s \in \mathbb{Z}_{\geq 1}\}\). Then we have 
		$$F_{\min}=\min\{F(1),F(s_1),F(s_2)\}$$
		and $F(s)>F_{\min}$ for $s\neq 1,s_1,s_2$.
	\end{lemma}
	\begin{proof}
		Note that the function 
		\begin{align}\label{eq:F-piecewise}
			F(s)=\begin{cases}
				(N-ps+M)s,& s=1,2,\dots,\lfloor N/p \rfloor;\\
				(ps-N+M)s,& s=\lfloor N/p \rfloor+1,\lfloor N/p \rfloor+2,\dots.
			\end{cases}
		\end{align}
		So $F$ attains its minimum only when $s=1$, $s=s_1$, or $s=s_2$. 
	\end{proof}
	
	\begin{lemma}\label{Appen1}
		Assume that $N>p\geq 3$ and $(p,N)\neq (3,8)$. Set $F=F_{N,p,1}$. Then we have 
		$$F_{\min}=\min\{F(s_1),F(s_2)\}$$
		and $F(s)>F_{\min}$ for $s\neq s_1,s_2$.
	\end{lemma}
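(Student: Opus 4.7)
The plan is to leverage \Cref{Appen0}: that lemma already yields $F_{\min}=\min\{F(1),F(s_1),F(s_2)\}$ together with $F(s)>F_{\min}$ for $s\notin\{1,s_1,s_2\}$. Hence my goal reduces to ruling out $s=1$ as a minimizer whenever $1\notin\{s_1,s_2\}$, equivalently to proving $F(1)>\min\{F(s_1),F(s_2)\}$ in the regime $s_1\geq 2$ (i.e., $N\geq 2p$); when $s_1=1$ the claim is immediate from \Cref{Appen0}.

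First I would write $N=s_1 p+r$ with $0\leq r\leq p-1$, so that $F(s_1)=(r+1)s_1$ and $F(1)=N-p+1=(s_1-1)p+(r+1)$. A direct computation then produces the key identity
\[
F(1)-F(s_1)=(s_1-1)(p-r-1),
\]
which is nonnegative (since $s_1\geq 2$ and $r\leq p-1$) and strictly positive unless $r=p-1$. Thus the generic range $0\leq r\leq p-2$ is disposed of at once: $F(1)>F(s_1)\geq F_{\min}$.

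The remaining subcase $r=p-1$ is exactly where the excluded pair enters. Here $s_2=s_1+1$, and one computes $F(1)=F(s_1)=ps_1$ together with $F(s_2)=2(s_1+1)$, so to close the argument I would instead establish $F(1)>F(s_2)$, which reduces to the elementary inequality $(p-2)s_1>2$. Under the running constraints $p\geq 3$ and $s_1\geq 2$, this strict inequality fails only for $(p,s_1)=(3,2)$, i.e., $N=s_1p+(p-1)=8$, which is precisely the forbidden pair $(p,N)=(3,8)$.

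The principal (though modest) obstacle is simply recognising that $(p,N)=(3,8)$ must genuinely be excluded rather than being an artefact of the argument: a direct check there gives $F(1)=F(2)=F(3)=6$, so the three candidate minimizers actually coincide and the uniqueness clause of the lemma truly fails in that case. Once this coincidence is isolated and the identity $F(1)-F(s_1)=(s_1-1)(p-r-1)$ is in place, the two-line case split above completes the proof.
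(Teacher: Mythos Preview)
Your proof is correct. Both you and the paper begin by invoking \Cref{Appen0} to reduce the problem to showing $F(1)>\min\{F(s_1),F(s_2)\}$ when $s_1\geq 2$, and both use the parameter $r=N-s_1p$ (the paper calls it $N_0$). The execution diverges from there: the paper argues by contradiction, splitting into the three cases $F(s_1)<F(s_2)$, $F(s_1)>F(s_2)$, $F(s_1)=F(s_2)$ and deriving a contradiction (or the exceptional pair) in each, which requires several auxiliary inequalities. Your route is more direct: the factorization $F(1)-F(s_1)=(s_1-1)(p-r-1)$ immediately handles all $r\leq p-2$, and the single remaining boundary $r=p-1$ reduces to the one-line inequality $(p-2)s_1>2$. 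Your argument is shorter and isolates exactly why $(p,N)=(3,8)$ is exceptional; the paper's case analysis is more laborious but reaches the same conclusion.
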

	\begin{proof}
		By \Cref{Appen0}, it suffices to show that $F(1)> \min\{F(s_1),F(s_2)\}$ holds when $s_1\geq 2$.
		
		Set $N_0:=N-ps_1$. When $N_0=0$, we have $F(s_1)=F(s_2)=s_1$, while 
		$$F(1)=s_1p-p+1=(s_1-1)(p-1)+s_1>s_1.$$
		When $N_0\neq 0$, we have $s_2=s_1+1$, and hence
		\begin{align*}
			&F(1)=ps_1+N_0-p+1,\\
			&F(s_1)=(N_0+1)s_1,\\
			&F(s_2)=(p-N_0+1)(s_1+1).
		\end{align*}
		Then we show that $F(1)> \min\{F(s_1),F(s_2)\}$ holds:
		\begin{enumerate}
			\item Assume that $F(s_1)<F(s_2)$ while $F(s_1)\geq F(1)$. Then we have 
			$$0\geq F(1)-F(s_1)=(p-1-N_0)(s_1-1),$$
			which gives $N_0=p-1$. So $F(s_1)=ps_1$ and $F(s_2)=2(s_1+1)$.
			Then 
			$$F(s_1)-F(s_2)=s_1(p-2)-2\geq 2(3-2)-2=0,$$
			contradicting $F(s_1)<F(s_2)$.
			\item  Assume that $F(s_1)>F(s_2)$ while $F(s_2)\geq F(1)$. So
			$$0\geq F(1)-F(s_2)=N_0s_1+2N_0-2p-s_1,$$
			$$0<F(s_1)-F(s_2)=2N_0s_1+N_0-ps_1-p-1.$$
			Then we have
			$$ps_1+p-N_0+1<2N_0s_1\leq 2s_1+4p-4N_0,$$
			and hence
			$$(p-2)(s_1-3)<5-3N_0.$$
			
			If $s_1=2$, then note that
			\begin{align*}
				0<&2(F(s_1)-F(s_2))+3(F(s_2)-F(1))\\
				=&2F(s_1)+F(s_2)-3F(1)=-2N_0+4,
			\end{align*}
			which gives $N_0=1$. Then $F(s_1)=4$ and $F(s_2)=3p$, which contradicts $F(s_1)>F(s_2)$.
			
			If $s_1\geq 3$, then $5-3N_0>(p-2)(s_1-3)\geq 0$, which gives $N_0=1$. Then we have 
			$$F(s_1)-F(s_2)=6-(p-2)(s_1-3)-4p\leq 6-4p<0,$$
			which also contradicts $F(s_1)>F(s_2)$.
			\item Assume that $F(s_1)=F(s_2)$ while $F(s_1)\geq F(1)$. 
			As in the first case, we can obtain $N_0=p-1$ from $F(s_1)\geq F(1)$. 
			Then by $F(s_1)=F(s_2)$, we have
			$ps_1=2(s_1+1)$, i.e., $p=2+\frac{2}{s_1}$. Recall that $s_1\geq 2$, so the only possible value is $p=3$ when $s_1=2$. However, it follows that $N=ps_1+N_0=8$ with $p=3$, which is the case excluded.\qedhere
		\end{enumerate}
	\end{proof}
	
	\begin{lemma}\label{AppenH}
		Assume that $N>p\geq 2$ with $N$ odd. Set $F=F_{N,p,2}$ and $G=F_{N,p,1}/2$. 
		Define
		$$H(s)=\begin{cases}
			\min\{F(s),G(s)\}, &s\leq s_1,2|p,2\nmid s,\\
			\min\{F(s),G(s)\}, &s\geq s_2,2|s,\\
			F(s), &\text{others}.
		\end{cases}$$
		Then we have
		\begin{enumerate}[wide] 
			\item 
			$H_{\min}=\min\{H(1),H(s_1),H(s_2)\}$ 
			and $H(s)>H_{\min}$ for $s\neq 1,s_1,s_2$;
			\item $H_{\min}\leq N$ if $2\nmid p$, and $H_{\min}\leq (N-1)/2$ if $2|p$;
			\item If $s_1>1$ and $(p,N)\notin \{(5,13),(5,23)\}\cup\{(3,6m+1):m\in\mathbb{Z}_{\geq 1}\}$, then $H_{\min}<F(1)$.
		\end{enumerate}
	\end{lemma}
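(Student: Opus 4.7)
The plan is to build the lemma on top of Lemma \Cref{Appen0} and Lemma \Cref{Appen1}, combined with a direct analysis of the two quadratics $F = F_{N,p,2}$ and $G = F_{N,p,1}/2$ on the intervals $[1, s_1]$ and $[s_2, \infty)$. Since $F(s) = 2G(s) + s$, the two quantities are tightly related, and the piecewise behavior of each quadratic restricts candidate minimizers of $H$ to the set $\{1, s_1, s_2\}$.

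For part (1), I would first invoke Lemma \Cref{Appen0} with $M = 2$ for $F$: its minimum over $\mathbb{Z}_{\geq 1}$ is attained in $\{1, s_1, s_2\}$ and strictly larger elsewhere. A parallel argument handles $G$: on $[1, s_1]$ it is a downward parabola with vertex $(N+1)/(2p)$, so the integer minimum is at an endpoint; on $[s_2, \infty)$ it is increasing. A short calculation gives $G(1) - G(s_1) = (s_1 - 1)(p - k - 1)/2 \geq 0$ with $k = N - s_1 p$, pinning candidate minimizers of $G$ in $\{1, s_1, s_2\}$ as well. For $s \notin \{1, s_1, s_2\}$, I would split on whether $H(s) = F(s)$ or $H(s) = G(s)$: the former gives $F(s) > F_{\min} \geq H_{\min}$ immediately. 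In the latter, the relevant case (a) or (b) in the definition of $H$ also applies at a $G$-minimizer in $\{1, s_1, s_2\}$, so $H(s) = G(s) > G_{\min} \geq H_{\min}$. The subtle subcase is $2 | s_1$ within case (a), where one must rule out interior integer $s$ with $G(s) < G(1) = H(1)$; this is handled by the identity $s_1 - s^* = (p - 1 - k)/p < 1$ for the root $s^* = (N - p + 1)/p$ of $G(\cdot) - G(1)$, so no such integer exists. Finally, the identity $G(s_2 + 1) = F(s_2)$ occurs only for $N = p^2 - p + 1$, and there a direct check shows $H(s_1) < H(s_2)$, which strictly separates $H_{\min}$ from $H(s_2 + 1) = H(s_2)$.

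Part (2) is immediate from part (1) and the computation $H(1) = F(1) = N - p + 2 \leq N$ if $2 \nmid p$, and $H(1) = G(1) = (N - p + 1)/2 \leq (N - 1)/2$ if $2 | p$.

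For part (3), if $2 | p$ then $H(1) = G(1) < F(1)$ and we are done. For $2 \nmid p$, write $k = N - s_1 p$ and use the identities $F(s_1) - F(1) = (s_1 - 1)(k + 2 - p)$ and $F(s_2) - F(1) = 2(s_1 + p) - k(s_1 + 2)$. When $k \leq p - 3$, the first is negative; when $k \in \{p-2, p-1\}$, the second is negative for $s_1$ above a small threshold; otherwise the $G$-shortcut $H(s_2) = G(s_2) < F(1)$ is available whenever $2 | s_2 = s_1 + 1$, i.e., $s_1$ odd, and follows directly from $G(s_2) = (p - k + 1)(s_1 + 1)/2$. Enumerating the configurations in which none of these shortcuts applies ($2 \nmid p$, $k \in \{p-2, p-1\}$, $s_1$ even, and $F(s_2) \geq F(1)$) yields precisely the exclusion set $\{(3, 6m+1) : m \geq 1\} \cup \{(5, 13), (5, 23)\}$. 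The main obstacle is precisely this enumeration, together with the boundary checks for $G$ in the subcase analysis of part (1).
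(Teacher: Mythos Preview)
Your overall strategy for part~(1) --- reduce $F$ via Lemma~\ref{Appen0} and then control the $G$-values separately on $[1,s_1]$ and $[s_2,\infty)$ --- is exactly the paper's approach, and parts~(2) and~(3) are essentially correct (your enumeration of the exceptional $(p,N)$ in part~(3) matches the paper's, using the same identities for $F(s_1)-F(1)$ and $F(s_2)-F(1)$).

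However, there is a genuine gap in part~(1). In the subcase where case~(b) of the definition of $H$ is active but $2\nmid s_2$, you must control $G(s_2+1)$, the smallest $G$-value appearing as an $H$-value on $[s_2,\infty)$. You compare $G(s_2+1)$ with $F(s_2)=H(s_2)$ and address only the equality $G(s_2+1)=F(s_2)$, asserting this happens only at $N=p^2-p+1$. But the \emph{strict} inequality $G(s_2+1)<F(s_2)$ also occurs: writing $N_0'=ps_2-N$, one has
\[
2\bigl(G(s_2+1)-F(s_2)\bigr)=(p-N_0'-3)s_2+(p+N_0'+1),
\]
and when $N_0'=p-1$ and $s_2>p$ (with $s_2$ odd) this is negative. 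For instance $(p,N)=(3,13)$ gives $s_2=5$, $G(6)=18<20=F(5)$, so $H(6)<H(5)$ and your comparison with $H(s_2)$ fails. The paper closes this gap by comparing $G(s_2+1)$ with $F(s_1)$ instead: when $N_0'=p-1$ one has $F(s_1)=3(s_2-1)$ and $G(s_2+1)=p(s_2+1)$, so $G(s_2+1)>F(s_1)$ whenever $p\geq 3$; the residual case $p=2$ is handled by comparing with $G(1)$. You should insert this comparison (and note that $N_0'=p-2$ is ruled out by the parity hypothesis on $N$, so the remaining values $N_0'\leq p-3$ give $G(s_2+1)>F(s_2)$ directly).
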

	
	\begin{proof}
		\begin{enumerate}[wide]
			\item It is clear that $G(s)<F(s)$ for any $s\in\mathbb{Z}_{\geq 1}$, 
			and hence $\min\{F(s),G(s)\}=G(s)$. 
			By \Cref{Appen0}, it suffices to show that $G(s)>H_{\min}$ for any $s\in S$, where
			$$S:=\{s:1<s<s_1,2|p,2\nmid s\}\cup \{s:s>s_2,2|s\}.$$
			\begin{enumerate}
				\item Assume that $2\nmid p$ and $2|s_2$. Then
				$S=\{s:s\geq s_2+2,2|s\}$ and $H_{\min}\leq H(s_2)=G(s_2)$.
				By \eqref{eq:F-piecewise}, we have $G(s_2)<G(s)$ for any $s\in S$.
				\item Assume that $2\nmid p$ and $2\nmid s_2$. Then 
				$S=\{s:s\geq s_2+1,2|s\}$.  
				By \eqref{eq:F-piecewise}, we have $G(s_2+1)\leq G(s)$ for any $s\in S$. Then we show that $G(s_2+1)>F_{\min}$.
				
				Set $N_0':=ps_2-N$. Then $F(s_2)=(N_0'+2)s_2$ and $G(s_2+1)=(N_0'+p+1)(s_2+1)/2$. 
				
				If $N_0'\leq p-2$, we have $G(s_2+1)>F(s_2)\geq F_{\min}$ by
				$$2(G(s_2+1)-F(s_2))=(p-N_0'-3)s_2+(p+N_0'+1)>0.$$
				If $N_0'=p-1$, then $F(s_1)=3s_1=3(s_2-1)$ and $G(s_2+1)=p(s_2+1)$.
				By $2\nmid p$, we have $p\geq 3$ and hence $G(s_2+1)>F(s_1)\geq F_{\min}$.
				\item Assume that $2|p$ and $2\nmid s_1$. Note that $s_2\neq s_1$; otherwise, $N=s_1p=s_2p$ is even, which is a contradiction. So $s_2=s_1+1$. 
				Then $H(1)=G(1)$, $H(s_1)=G(s_1)$ and $H(s_2)=G(s_2)$.
				It follows that $H_{\min}\leq H(s_2)=G(s_2)<G(s)$ for any $s\in \{s:s>s_2,2|s\}$. 
				
				Now it suffices to show that $H_{\min}<G(s)$ for any $s\in \{s:1<s<s_1,2\nmid s\}$. Note that $H(1)=G(1)$ and $H(s_1)=G(s_1)$, so $H_{\min}\leq \min\{G(1),G(s_1)\}$.
				By \eqref{eq:F-piecewise}, we have $\min\{G(1),G(s_1)\}<G(s)$ for any $s\in \{s:1<s<s_1\}$. 
				\item Assume that $2|p$ and $2|s_1$. Analogously, we have $s_2=s_1+1$. So $H(1)=G(1)$, $H(s_1)=F(s_1)$ and $H(s_2)=F(s_2)$.
				
				For any $s\in \{s:1<s\leq s_1-1,2\nmid s\}$, 
				by \eqref{eq:F-piecewise}, we have $G(1)< G(s)$ since $\tfrac{N+1}{p}-(s_1-1)>1$. 
				Then it follows that $H_{\min}\leq H(1)=G(1)<G(s)$.
				
				By \eqref{eq:F-piecewise}, we have $G(s_2+1)\leq G(s)$ for any $s\in \{s:s>s_2,2|s\}$. Set $N_0':=ps_2-N$. Analogously to the case with $2\nmid p$ and $2\nmid s_2$, we can obtain that $G(s_2+1)>F(s_1)$ when $N_0'\leq p-2$, or $N_0'=p-1$ and $p\geq 3$. 
				When $p=2$ and $N_0'=1$, it follows that $G(s_2+1)=2(s_2+1)$ and $G(1)=s_2-1$, so $G(1)<G(s_2+1)$. Therefore, we have shown that $H_{\min}<G(s_2+1)\leq G(s)$ for any $s\in \{s:s>s_2,2|s\}$. 
			\end{enumerate}
			\item If $2\nmid p$, then $H_{\min}\leq F(1)=N-p+2\leq N$.
			
			If $2|p$, then $2H_{\min}\leq 2G(1)=N-p+1\leq N-1$.
			
			\item If $2| p$,  (3) is true,  since $H_{\min}\leq H(1)=G(1)<F(1)$. Assume  now that $2\nmid p$.
			
			When $N-s_1p<p-2$, by \eqref{eq:F-piecewise}, we have $F(s_1)<F(1)$ since $\frac{N+2}{p}-s_1<1\leq \frac{N+2}{2p}$. 
			
			When $N-s_1p=p-2$, $s_1=s_2-1$ is even.
			Then $F(1)=s_1p$ and $H(s_2)=F(s_2)=4s_2$. 
			Note that $H(s_2)<F(1)$ if and only if $(p-4)s_1>4$. 
			Since $p$ is odd and $s_1$ is even, the inequality does not hold only when $(p,s_1)\in \{(5,2),(5,4)\}\cup\{(3,2m):m\in\mathbb{Z}_{\geq 1}\}$, that is, $(p,N)\in \{(5,13),(5,23)\}\cup\{(3,6m+1):m\in\mathbb{Z}_{\geq 1}\}$.
			
			When $N-s_1p=p-1$, $s_2=s_1+1$ is even. Then $F(1)=s_1p+1$ and $H(s_2)=G(s_2)=s_2$. It follows that $H_{\min}\leq H(s_2)<F(1)$.\qedhere
		\end{enumerate}
	\end{proof}
	
	\section*{Statements and Declarations}
	
	\noindent\textbf{Funding.}
	This work was supported by the National Natural Science Foundation of China
	(Grant Nos. 12171312 and 12571028).
	
	\medskip
	\noindent\textbf{Competing interests.}
	The authors have no relevant financial or non-financial interests to disclose.
	
	\medskip
	\noindent\textbf{Data availability.}
	All data supporting the findings of this study are available within the article.


\begin{thebibliography}{AKMPP20}
		
		\bibitem[AKMPP20]{AKMPP20}
		Dra{\v{z}}en Adamovi{\'c}, Victor~G. Kac, Pierluigi M{\"o}seneder Frajria,
		Paolo Papi, and Ozren Per{\v{s}}e.
		\newblock An application of collapsing levels to the representation theory of
		affine vertex algebras.
		\newblock \emph{International Mathematics Research Notices},
		2020(13):4103--4143, 2020.
		
		\bibitem[AP08]{AP08}
		Dra{\v{z}}en Adamovi{\'c} and Ozren Per{\v{s}}e.
		\newblock Representations of certain non-rational vertex operator algebras of
		affine type.
		\newblock \emph{Journal of Algebra}, 319(6):2434--2450, 2008.
		
		\bibitem[APV23]{APV23}
		Dra{\v{z}}en Adamovi{\'c}, Ozren Per{\v{s}}e, and Ivana Vukorepa.
		\newblock On the representation theory of the vertex algebra
		$L_{-5/2}(\mathfrak{sl}_4)$.
		\newblock \emph{Communications in Contemporary Mathematics}, 25(2),
		Article 2150104, 2023.
		
		\bibitem[ADFLM25]{ADFLM25}
		Tomoyuki Arakawa, Xuanzhong Dai, Justine Fasquel, Bohan Li, and Anne Moreau.
		\newblock On some simple orbifold affine {VOA}s at non-admissible level
		arising from rank one 4{D} {SCFT}s.
		\newblock \emph{Communications in Mathematical Physics}, 406,
		Article 30, 2025.
		
		\bibitem[AJM21]{AJM21}
		Tomoyuki Arakawa, Cuipo Jiang, and Anne Moreau.
		\newblock Simplicity of vacuum modules and associated varieties.
		\newblock \emph{Journal de l'\'Ecole polytechnique~--- Math\'ematiques},
		8:169--191, 2021.
		
		\bibitem[AM17]{AM17}
		Tomoyuki Arakawa and Anne Moreau.
		\newblock Sheets and associated varieties of affine vertex algebras.
		\newblock \emph{Advances in Mathematics}, 320:157--209, 2017.
		
		\bibitem[DGK82]{DGK82}
		Vinay~V. Deodhar, Ofer Gabber, and Victor~G. Kac.
		\newblock Structure of some categories of representations of
		infinite-dimensional Lie algebras.
		\newblock \emph{Advances in Mathematics}, 45(1):92--116, 1982.
		
		\bibitem[Fie06]{fiebig2006combinatorics}
		Peter Fiebig.
		\newblock The combinatorics of category $\mathcal{O}$ over symmetrizable
		Kac--Moody algebras.
		\newblock \emph{Transformation Groups}, 11(1):29--49, 2006.
		
		\bibitem[FZ92]{FZ92}
		Igor~B. Frenkel and Yongchang Zhu.
		\newblock Vertex operator algebras associated to representations of affine and
		Virasoro algebras.
		\newblock \emph{Duke Mathematical Journal}, 66(1):123--168, 1992.
		
		\bibitem[GK07]{gorelik2007simplicity}
		Maria Gorelik and Victor~G. Kac.
		\newblock On simplicity of vacuum modules.
		\newblock \emph{Advances in Mathematics}, 211(2):621--677, 2007.
		
		\bibitem[Hu72]{Humphreys}
		James~E. Humphreys.
		\newblock \emph{Introduction to Lie Algebras and Representation Theory}.
		\newblock Graduate Texts in Mathematics, volume~9.
		Springer-Verlag, New York, 1972.
		
		\bibitem[Jan77]{jantzen1977kontravariante}
		Jens~C. Jantzen.
		\newblock Kontravariante Formen auf induzierten Darstellungen halbeinfacher
		Lie-Algebren.
		\newblock \emph{Mathematische Annalen}, 226(1):53--65, 1977.
		
		\bibitem[JS25]{JS25}
		Cuipo Jiang and Jingtian Song.
		\newblock Associated varieties of simple affine {VOA}s
		$L_k(\mathfrak{sl}_3)$ and $W$-algebras
		$W_k(\mathfrak{sl}_3,f)$.
		\newblock \emph{Communications in Mathematical Physics}, 406,
		Article 104, 2025.
		
		\bibitem[Kac90]{kac1990infinite}
		Victor~G. Kac.
		\newblock \emph{Infinite-Dimensional Lie Algebras}.
		\newblock Third edition. Cambridge University Press, Cambridge, 1990.
		
		\bibitem[KK79]{KK79}
		Victor~G. Kac and David~A. Kazhdan.
		\newblock Structure of representations with highest weight of
		infinite-dimensional Lie algebras.
		\newblock \emph{Advances in Mathematics}, 34(1):97--108, 1979.
		
		\bibitem[KW89]{KW89}
		Victor~G. Kac and Minoru Wakimoto.
		\newblock Classification of modular invariant representations of affine
		algebras.
		\newblock In \emph{Infinite-dimensional Lie Algebras and Groups
			(Luminy-Marseille, 1988)}, volume~7 of
		\emph{Advanced Series in Mathematical Physics}, pages 138--177.
		World Scientific Publishing, Teaneck, NJ, 1989.
		
		\bibitem[KT00]{KT00}
		Masaki Kashiwara and Toshiyuki Tanisaki.
		\newblock Characters of irreducible modules with non-critical highest weights
		over affine Lie algebras.
		\newblock In \emph{Representations and Quantizations (Shanghai, 1998)},
		pages 275--296.
		China Higher Education Press, Beijing, 2000.
		
		\bibitem[KL79]{KL79}
		David Kazhdan and George Lusztig.
		\newblock Representations of Coxeter groups and Hecke algebras.
		\newblock \emph{Inventiones Mathematicae}, 53(2):165--184, 1979.
		
		\bibitem[Kum87]{Ku87}
		Shrawan Kumar.
		\newblock Extension of the category $\mathcal{O}^{\mathfrak{g}}$ and a
		vanishing theorem for the Ext functor for Kac--Moody algebras.
		\newblock \emph{Journal of Algebra}, 108(2):472--491, 1987.
		
		\bibitem[LL04]{LL04}
		James Lepowsky and Haisheng Li.
		\newblock \emph{Introduction to Vertex Operator Algebras and Their
			Representations}.
		\newblock Progress in Mathematics, volume~227.
		Birkh{\"a}user Boston, Boston, MA, 2004.
		
		\bibitem[MFF86]{MFF86}
		Fyodor~G. Malikov, Boris~L. Feigin, and Dmitry~B. Fuchs.
		\newblock Singular vectors in Verma modules over Kac--Moody algebras.
		\newblock \emph{Functional Analysis and Its Applications},
		20(2):103--113, 1986.
		
		\bibitem[Pe08]{Pe08}
		Ozren Per{\v{s}}e.
		\newblock Vertex operator algebras associated to certain admissible modules
		for affine Lie algebras of type $A$.
		\newblock \emph{Glasnik Matemati\v{c}ki, Serija III},
		43(63), no.~1, 41--57, 2008.
		
		\bibitem[Sha72]{shapovalov1972bilinear}
		N.~N. Shapovalov.
		\newblock On a bilinear form on the universal enveloping algebra of a complex
		semisimple Lie algebra.
		\newblock \emph{Functional Analysis and Its Applications},
		6(4):307--312, 1972.
		
		\bibitem[TY05]{tauvel2005lie}
		Patrice Tauvel and Rupert~W.~T. Yu.
		\newblock \emph{Lie Algebras and Algebraic Groups}.
		\newblock Springer Monographs in Mathematics.
		Springer-Verlag, Berlin, 2005.
		
	\end{thebibliography}
\end{document}